\newtheorem{thm}{Theorem}[section]
\newtheorem{pro}[thm]{Proposition}
\newtheorem{`thm'}[thm]{``Theorem''}
\newtheorem{lem}[thm]{Lemma}
\newtheorem{rmk}[thm]{Remark}
\newtheorem{rmks}[thm]{Remarks}
\newtheorem{dfn}[thm]{Definition}
\newtheorem{fact}[thm]{Fact}
\newtheorem{asm}[thm]{Assumption}
\newenvironment{proof}[1][\proofname]{\par
  \normalfont
  \topsep6\p@\@plus6\p@ \trivlist
  \item[\hskip\labelsep{\textit{\mdseries #1}\@addpunct{\mdseries.}}]\ignorespaces
}{%
  \QED \endtrivlist
}
\newcommand{\proofname}{\normalfont{\textit{Proof.}}}
\def\BOXSYMBOL{\RIfM@\bgroup\else$\bgroup\aftergroup$\fi
  \vcenter{\hrule\hbox{\vrule height.85em\kern.6em\vrule}\hrule}\egroup}
\newcommand{\BOX}{%
  \ifmmode\else\leavevmode\unskip\penalty9999\hbox{}\nobreak\hfill\fi
  \quad\hbox{\BOXSYMBOL}}
\newcommand\QED{\BOX}
\numberwithin{equation}{section}
\newcommand{\bb}[1]{\mathbb#1}
\newcommand{\fra}[1]{\mathfrak#1}
\newcommand{\ca}[1]{\mathcal#1}
\newcommand{\innpro}[3]{\langle#1,#2\rangle_{#3}}
\newcommand{\alg}{{\rm alg}}
\newcommand{\ind}{{\rm ind}}
\newcommand{\id}{{\rm id}}
\newcommand{\End}{{\rm End}}
\newcommand{\dom}{{\rm dom}}
\newcommand{\op}{{\rm op}}
\newcommand{\Lie}{{\rm Lie}}
\newcommand{\fin}{{\rm fin}}
\newcommand{\Op}{{\rm Op}}
\newcommand{\bra}[1]{\left(#1\right)}
\newcommand{\bbra}[1]{\left\{#1\right\}}
\newcommand{\bbbra}[1]{\left[#1\right]}
\newcommand{\Dirac}{\cancel{\partial}}
\begin{document}
\title{$LT$-equivariant Index from the Viewpoint of $KK$-theory}
\author{Doman Takata}
\affil{{\small Department of mathematics, Kyoto university \\
  Kitashirakara, Oiwake-cho, Sakyo-ku \\
  Kyoto-shi, 606-8502, Japan
 \\E-mail address: {\tt d.takata@math.kyoto-u.ac.jp}}}
\date{\today}

\maketitle
\begin{abstract}
Let $T$ be a circle group, and $LT$ be its loop group. We hope to establish an index theory for infinite-dimensional manifolds which $LT$ acts on, including Hamiltonian $LT$-spaces, from the viewpoint of $KK$-theory. We have already constructed several objects in the previous paper \cite{T}, including a Hilbert space $\ca{H}$ consisting of ``$L^2$-sections of a Spinor bundle on the infinite-dimensional manifold'', an ``$LT$-equivariant Dirac operator $\ca{D}$'' acting on $\ca{H}$, a ``twisted crossed product of the function algebra by $LT$'', and the ``twisted group $C^*$-algebra of $LT$'', without the measure on the manifolds, the measure on $LT$ or the function algebra itself. However, we need more sophisticated constructions. In this paper, we study the index problem in terms of $KK$-theory.

Concretely, we focus on the infinite-dimensional version of the latter half of the assembly map defined by Kasparov. Generally speaking, for a $\Gamma$-equivariant $K$-homology class $x$, the assembly map is defined by $\mu^\Gamma(x):=[c]\otimes j^\Gamma(x)$, where $j^\Gamma$ is a $KK$-theoretical homomorphism, $[c]$ is a $K$-theory class coming from a cut-off function, and $\otimes$ denotes the Kasparov product with respect to $\Gamma\ltimes C_0(X)$. We will define neither the $LT$-equivariant $K$-homology nor the cut-off function, but we will indeed define the $KK$-cycles $j^{LT}_\tau(x)$ and $[c]$ directly, for a virtual $K$-homology class $x=(\ca{H},\ca{D})$ which is mentioned above. As a result, we will get the $KK$-theoretical index $\mu^{LT}_\tau(x)\in KK(\bb{C},LT\ltimes_\tau \bb{C})$. We will also compare $\mu^{LT}_\tau(x)$ with the analytic index $\ind_{LT\ltimes_\tau\bb{C}}(x)$ which will be introduced.


\end{abstract}
\tableofcontents
\section{Introduction}
The overall goal of our research is a construction of an index theorem for some infinite-dimensional manifolds, in terms of noncommutative differential geometry (\cite{ASi1}, \cite{ASi2}, \cite{Con94} and \cite{Kas88}). We will study a certain generalization of Hamiltonian $LT$-spaces (\cite{AMM}, \cite{Mei} and \cite{Son}).

\begin{asm}\label{setting}
Let $T$ be a circle group, and $LT:=C^\infty(S^1,T)$ be its loop group. It acts on the dual Loop algebra $L\fra{t}^*$, via the gauge action $l.A:=A+l^{-1}dl$. Let $\ca{M}$ be an infinite-dimensional manifold equipped with a smooth $LT$-action, a proper equivariant map $\Phi:\ca{M}\to L\fra{t}^*$, and an $LT$-equivariant Spionor bundle $\ca{S}\to \ca{M}$. Moreover, we suppose that $\ca{M}$ admits a $\tau$-twisted $LT$-equivariant line bundle $\ca{L}\to\ca{M}$, where ``$\tau$-twisted $LT$-equivariant'' means that a $U(1)$-central extension $LT^\tau$ (of $LT$) acts on $\ca{L}$, and the added center $U(1)$ acts on $\ca{L}$ by the scalar multiplication. The explicit definition of $\tau$ will be given in Definition \ref{central extension}.
\end{asm}

\begin{rmk}
We hope to deal with ``good'' representations satisfying several natural conditions, and such representations are called positive energy representations, extensively studied in \cite{PS} and \cite{FHTII}. But there is no such representations unless we consider a $U(1)$-central extension.


\end{rmk}

We have proved the following theorem in the previous paper \cite{T}.
Let us review this result in the language of this paper.

\begin{thm}[\cite{T}]

In the above situation, we can construct a $C^*$-algebra $A$ which can be regarded as ``$LT\ltimes_{-\tau} C_0(\ca{M})$'', a Hilbert space $\ca{H}$ which can be regarded as ``$L^2(\ca{M},\ca{L}\otimes\ca{S})$'', and an unbounded operator $\ca{D}:\dom(\ca{D})\to \ca{H}$ which can be regarded as an ``$LT^\tau$-equivariant Dirac operator on $\ca{M}$''. Moreover, $A$ acts on $\ca{H}$, and they define a spectral triple.

In addition, we can construct a $C^*$-algebra $LT\ltimes_{-\tau} \bb{C}$ which can be regarded as the ``$-\tau$-twisted group $C^*$-algebra of $LT$''.
$\ca{D}$ has a well-defined index valued in $R^\tau(LT)\cong KK(\bb{C},LT\ltimes_{-\tau} \bb{C})$. 

\end{thm}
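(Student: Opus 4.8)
The plan is to trade the infinite-dimensional geometry of $\ca{M}$ for a finite-dimensional one coupled to a Fock-space model of the transverse directions, using the structure of the gauge action and the properness of $\Phi$. Write the Fourier decomposition $L\fra{t}^*=\fra{t}^*\oplus V$, where $V$ collects the non-constant modes; the sign of the frequency equips $V$ with a complex structure $J$, making $(V,J)$ a model polarized symplectic vector space. The based loop group $\Omega T=LT/T$ acts freely on $L\fra{t}^*$ through the gauge action, and properness of $\Phi$ together with equivariance lets one present $\ca{M}$ as a finite-dimensional $T$-manifold $M$ (a reduction of $\ca{M}$ along the $\Omega T$-directions, with residual $T\cong LT/\Omega T$-action) carrying transverse directions modelled on $V$. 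Accordingly the Spinor bundle $\ca{S}$ splits into a finite-rank $\mathrm{Spin}^c$-bundle $S_M\to M$ and the Fock space $\ca{F}=\ca{F}(V)$ built from $J$, and the $\tau$-twisted line bundle $\ca{L}$ upgrades the $LT$-action on $\ca{F}$ to a level-$\tau$ positive energy representation of $LT^\tau$, in particular one whose lowest-energy space is one-dimensional. One then \emph{defines} the Hilbert space by a concrete model, schematically $\ca{H}\cong L^2(M,S_M\otimes L_M)\otimes\ca{F}$ (Hilbert space tensor product), on which $A\cong LT\ltimes_{-\tau}C_0(\ca{M})$ acts through the combination of multiplication operators, the covariant $LT^\tau$-representation, and the Clifford action; this is the sense in which ``$L^2(\ca{M},\ca{L}\otimes\ca{S})$'' is given meaning without a measure on $\ca{M}$ or on $LT$.

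For the Dirac operator one assembles, out of the $\mathrm{Spin}^c$ Dirac operator $\Dirac_M$ of $(M,S_M\otimes L_M)$, a transverse Dirac-type operator on $\ca{F}$, and Clifford multiplication by the moment map, an unbounded operator $\ca{D}$ that is $LT^\tau$-equivariant up to the scalar action of the central circle. The points to check are: (i) $\ca{D}$ is essentially self-adjoint on its natural domain; (ii) $[\ca{D},a]$ extends to a bounded operator for $a$ in a natural dense $*$-subalgebra of $A$, verified piece by piece (the function part contributing Clifford multiplications by differentials, the loop-group part contributing controlled first-order terms); and (iii) $a(1+\ca{D}^2)^{-1}$ is compact for $a\in A$, where a Bochner-type identity combines ellipticity on $M$, the proper growth of the moment map, and the discreteness --- with finite multiplicities, thanks to the positive energy property --- of the transverse energy operator. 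Because $C_0(\ca{M})$ is non-unital, (iii) does not force $(1+\ca{D}^2)^{-1}$ itself to be compact. Thus $(A,\ca{H},\ca{D})$ is a spectral triple.

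The twisted group $C^*$-algebra $LT\ltimes_{-\tau}\bb{C}$ is constructed as the completion of the $(-\tau)$-twisted convolution algebra of $LT^\tau$ in the norm coming from its positive energy representations of level $-\tau$. By the classification of positive energy representations only finitely many irreducibles occur, each infinite-dimensional, so the relevant building blocks are copies of the compact operators and one computes $KK(\bb{C},LT\ltimes_{-\tau}\bb{C})=K_0(LT\ltimes_{-\tau}\bb{C})\cong R^\tau(LT)$, the free abelian group on the level-$\tau$ positive energy irreducibles (the sign of the twist being opposite on the two sides, as dictated by how $\ca{L}$ enters the construction). Finally, since $\ca{D}$ is equivariant, in each positive energy isotype its kernel $\ker\ca{D}=\ker\ca{D}^+\oplus\ker\ca{D}^-$ has a multiplicity space that is finite-dimensional because $M$ is compact, and only finitely many isotypes occur; hence $\ind(\ca{D}):=[\ker\ca{D}^+]-[\ker\ca{D}^-]$ is a well-defined element of $R^\tau(LT)\cong KK(\bb{C},LT\ltimes_{-\tau}\bb{C})$ --- equivalently, the $T$-equivariant index of the finite-dimensional reduction, read through the Freed--Hopkins--Teleman-type identification.

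The crux --- and the step I expect to be hardest --- is entirely analytic: giving rigorous meaning to ``$L^2$'' and to $\ca{D}$ in infinitely many variables with no measure available, and, above all, proving that the transverse energy operator genuinely has finite-dimensional eigenspaces once coupled to the geometry. This last point is exactly where the $U(1)$-central extension is indispensable: without it there is no positive energy representation, the lowest-energy space is not finite-dimensional, and the index is not well defined. A close second is pinning down the precise positive-energy $C^*$-completion of the twisted convolution algebra of $LT$ so that its $K$-theory is literally $R^\tau(LT)$.
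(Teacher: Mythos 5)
Your overall strategy --- factor $\ca{M}$ into a finite-dimensional piece carrying the residual $T$-action and a transverse piece modelled on the Lie algebra of $\Omega T_0$ with its rotation-induced complex structure, then realize $\ca{H}$, $\ca{D}$ and the twisted group $C^*$-algebra directly in a Fock-space model --- is the same one the paper attributes to \cite{T} and reconstructs in Sections 4.1--4.2. However, your Hilbert space is structurally wrong in a way that makes the crucial claim ``$A$ acts on $\ca{H}$'' fail. You set $\ca{H}\cong L^2(M,S_M\otimes L_M)\otimes\ca{F}$ with a \emph{single} Fock space $\ca{F}$ carrying the level-$\tau$ positive energy representation. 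But a covariant pair for $(\Omega T_0^\tau, C_0(\Omega T_0))$ requires a $*$-representation of the transverse function algebra, and $C_0(\Omega T_0)$ has no $*$-representation on the irreducible Fock representation alone. Stone--von Neumann-style, ``$L^2(\Omega T_0,\tau)$'' must be the \emph{double} Fock space $L^2(\bb{R}^\infty)\otimes L^2(\bb{R}^\infty)^*$, and the transverse part of $\ca{H}$ is $L^2(\Omega T_0,\tau)\otimes S_{\Omega T_0}=L^2(\bb{R}^\infty)\otimes L^2(\bb{R}^\infty)^*\otimes S_{\Omega T_0}$ --- exactly what the paper records (Section 4.2, with Remark 4.8 crediting these to \cite{T}). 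Your single $\ca{F}$ is, up to notation, the module underlying the analytic \emph{index} $((\Omega T_0\ltimes_{-\tau}\bb{C})\otimes S_{\Omega T_0},\Dirac_R)\cong (L^2(\bb{R}^\infty)\otimes S_{\Omega T_0},\Dirac_R)$, i.e.\ the output of the assembly map, not the $K$-homology module that is its input; you have conflated the two sides.

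Two further, smaller mismatches. First, the paper's factorization is $\ca{M}\cong\widetilde M\times\Omega T_0$ with $\widetilde M\to M$ a $\Pi_T$-principal bundle (the winding-number covering of $M=\ca{M}/\Omega T$); your reduction to $M$ collapses this covering, so the $T\times\Pi_T$ component of the cocycle, $\tau((t_1,n_1),(t_2,n_2))=t_2^{kn_1}$, has no home in your model. Second, $LT\ltimes_{-\tau}\bb{C}$ cannot be ``the completion of the $(-\tau)$-twisted convolution algebra of $LT^\tau$'', as there is no Haar measure on $LT$ and hence no convolution algebra to complete --- this is precisely the obstacle the theorem advertises it avoids. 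The actual construction, in the $\Omega T_0$-direction, is the \emph{definition} $\Omega T_0\ltimes_{-\tau}\bb{C}:=\ca{K}(L^2(\bb{R}^\infty))$, compact operators on the unique positive-energy Fock representation, tensored with the honest (locally compact) $T\times\Pi_T$ contribution. You do flag both the measure problem and the $C^*$-completion question as the hard part, which is the right instinct, but the resolution is a direct $\ca{K}$-of-Fock-space definition rather than a completion of a nonexistent convolution algebra.
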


It is natural to ask if the index has a ``cohomological formula'' just like the classical index theorem (\cite{ASe1}, \cite{ASi2}). According to \cite{Con94}, \cite{Kha} or \cite{Pus}, it is enough to describe the analytic index map in terms of Kasparov product in $KK$-theory, as a map between $KK$-groups. The followings must be useful not only the classical cases, but also for ours.

\begin{dfn}[\cite{Kas88}]
$(1)$ For a group $\Gamma$ satisfying the assumption in Remark \ref{ass. of grp}, and $\Gamma$-$C^*$-algebras $A$ and $B$, the $j$-homomorphism $j^\Gamma:KK_\Gamma(A,B)\to KK(\Gamma\ltimes A,\Gamma\ltimes B)$ is defined by using the reduced crossed products.

$(2)$ Let $X$ be a locally compact Hausdorff space equipped with a proper and cocompact $\Gamma$-action. 
An action $\Gamma\curvearrowright X$ is said to be proper if the map $X\times\Gamma\ni (x,\gamma)\mapsto (x,\gamma.x)\in X\times X$ is proper, and cocompact if the quotient space $X/\Gamma$ is compact.
If $\Gamma$ acts on $X$ properly and cocompactly, there exists a nonnegative, compactly supported function $c:X\to \bb{R}_{\geq0}$ satisfying that $\int_\Gamma c(\gamma.x)d\gamma=1$ for any $x\in X$. It induces a $K$-theory element $[c]\in KK(\bb{C},\Gamma\ltimes C_0(X))$ defined by $[c](\gamma)=\sqrt{c}\cdot\sqrt{\gamma.c}$. We call $[c]$ the Mishchenko line bundle associated to $\Gamma\curvearrowright X$.

$(3)$ The assembly map $\mu^\Gamma$ is defined by the composition of 
$$KK_\Gamma(C_0(X),\bb{C})\xrightarrow{j^\Gamma} KK(\Gamma\ltimes C_0(X),\Gamma\ltimes\bb{C})\xrightarrow{[c]\otimes-}KK(\bb{C},\Gamma\ltimes\bb{C}).$$
\end{dfn}

Let $\Gamma$ be a group as in Remark \ref{ass. of grp}, $X$ be a complete $Spin^c$-manifold, equipped with a Spinor bundle $W$ and a Dirac operator $D$. Let us consider the analytic index of the equivariant $K$-homology class $(L^2(X,W),D)\in KK_\Gamma(C_0(X),\bb{C})$.
\begin{dfn}[\cite{Kas88}]\label{def of a-ind}
For $s_1,s_2\in C_c(X,W)$, we can define a $C_c(\Gamma)$-valued inner product by
$$\innpro{s_1}{s_2}{\Gamma\ltimes\bb{C}}(\gamma):=\int \innpro{s_1(x)}{\gamma.(s_2(\gamma^{-1}.x)}{W}dx,$$
and a right action of $C_c(\Gamma)$ on $L^2(X,W)$ by
$$s*b(x):=\int_\Gamma \gamma.s(\gamma^{-1}.x)b(\gamma^{-1})d\gamma.$$
The analytic index $\ind_{\Gamma\ltimes \bb{C}}(L^2(X,W),D)\in KK(\bb{C},\Gamma\ltimes \bb{C})$ is given by the completion of $C_c(X,W)$ with respect to the above inner product.
\end{dfn}

\begin{fact}[\cite{Kas15}]
In the above situation,
$\ind_{\Gamma\ltimes\bb{C}}([L^2(X,W),D])=\mu^\Gamma([L^2(X,W),D]).$

\end{fact}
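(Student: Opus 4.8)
The plan is to compute the Kasparov product $[c]\otimes_{\Gamma\ltimes C_0(X)}j^\Gamma(x)$ defining $\mu^\Gamma(x)$ with explicit representatives and to recognise the result as the cycle of Definition~\ref{def of a-ind}. Write $x=[L^2(X,W),D]$ and let $F:=D(1+D^2)^{-1/2}$ be the bounded transform, so that $x=(L^2(X,W),F)\in KK_\Gamma(C_0(X),\bb{C})$; cocompactness of $X/\Gamma$ makes $F^2-1$ a $\Gamma$-compact operator, so the class is well defined. The descent $j^\Gamma(x)$ is then represented by $(\ca{E},\widetilde{F})$, where $\ca{E}$ is the Hilbert $(\Gamma\ltimes\bb{C})$-module obtained by completing $C_c(\Gamma,C_c(X,W))$, on which $\Gamma\ltimes C_0(X)$ acts on the left through the integrated covariant representation and $\widetilde{F}$ is $F$ applied fibrewise. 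On the other side, $[c]$ is represented by $(\ca{E}_c,0)$, where $p_c\in C_c(\Gamma,C_0(X))\subseteq\Gamma\ltimes C_0(X)$ is the element $p_c(\gamma)=\sqrt{c}\cdot(\gamma.\sqrt{c})$: the hypothesis $\int_\Gamma c(\gamma.x)\,d\gamma=1$ is exactly what makes $p_c$ a self-adjoint idempotent, and $\ca{E}_c:=p_c\cdot(\Gamma\ltimes C_0(X))$, with $\bb{C}$ acting by scalars.

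Next I would carry out the internal tensor product. Because $\ca{E}_c$ is the submodule of $\Gamma\ltimes C_0(X)$ cut out by the projection $p_c$, there is a canonical unitary of Hilbert $(\Gamma\ltimes\bb{C})$-modules $\ca{E}_c\otimes_{\Gamma\ltimes C_0(X)}\ca{E}\cong p_c\cdot\ca{E}$, and $\bb{C}$ still acts by scalars. The Kasparov product operator is determined, up to $(\Gamma\ltimes\bb{C})$-compact perturbation and operator homotopy, by the Connes--Skandalis connection condition together with the positivity condition; since the operator of $[c]$ vanishes, the connection condition merely requires a connection for $\widetilde{F}$, and the compression $p_c\widetilde{F}p_c$ is the natural choice. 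Using that $F$ has $C_0(X)$-bounded commutators, being the bounded transform of a first-order operator, the difference $p_c\widetilde{F}p_c-\widetilde{F}|_{p_c\cdot\ca{E}}$ is $(\Gamma\ltimes\bb{C})$-compact, so the product cycle is $\big(p_c\cdot\ca{E},\ \widetilde{F}|_{p_c\cdot\ca{E}}\big)$.

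It then remains to match this with the analytic index cycle. I would define $V\colon C_c(X,W)\to p_c\cdot\ca{E}$ by $s\mapsto p_c\cdot\iota(s)$, where $\iota(s)\in C_c(\Gamma,C_c(X,W))$ is obtained from $s$ by a fixed multiplication by $\sqrt{c}$, and check by a direct computation --- in which $\int_\Gamma c(\gamma.x)\,d\gamma=1$ collapses the $\Gamma$-integration --- that $\innpro{V(s)}{V(t)}{\Gamma\ltimes\bb{C}}(\gamma)=\int\innpro{s(x)}{\gamma.(t(\gamma^{-1}.x))}{W}\,dx$, so that $V$ is an isometry for the inner product of Definition~\ref{def of a-ind}. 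Its range is dense, again by cocompactness, hence $V$ extends to a unitary from the completion $\overline{C_c(X,W)}$ onto $p_c\cdot\ca{E}$ intertwining the bounded transform of $D$ with $\widetilde{F}|_{p_c\cdot\ca{E}}$; therefore $\mu^\Gamma(x)$ is represented by the analytic index cycle, which is the assertion. I expect the main obstacle to lie in the two ``soft'' points above: first, rigorously verifying that the compression $p_c\widetilde{F}p_c$ is an admissible representative of the Kasparov product --- that is, that it satisfies the connection and positivity axioms, or equivalently, in the unbounded Baaj--Julg picture, that $D$ descends to a regular self-adjoint operator compatible with the Mishchenko module --- and second, the careful bookkeeping of the Haar measure and of the fibrewise $\Gamma$-action needed to define $\iota$ and to establish the displayed isometry identity. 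These are precisely the points where the argument must be re-engineered in the $LT$-equivariant setting, in which neither a Haar measure on $LT$ nor an honest cut-off function is available.
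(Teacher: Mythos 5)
The paper does not prove this statement: it is labeled a Fact and cited directly from Kasparov's article \cite{Kas15}, so there is no ``paper's own proof'' to compare with. Your sketch is therefore a blind reconstruction of an external argument, and as such it is a reasonable and broadly correct outline of the standard proof. The closest the paper comes to such an argument is the model computation in Section~3.3 (Lemma~\ref{cut off=proj}, Propositions~\ref{value of the assembly map} and~\ref{comp of a-ind}), which proves the specialized statement $\mu^G_\tau(x)=\ind_{G\ltimes_\tau\bb{C}}(x)$ for an abelian Lie group $G$ with a $U(1)$-central extension.

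The two approaches differ in a few instructive ways. You work in the bounded picture, identify $[c]$ as a projection $p_c$ in the crossed product, cut down the descent module $\ca{E}$ to $p_c\cdot\ca{E}$, and verify the product via the Connes--Skandalis connection/positivity criteria; then you define an isometry $V\colon C_c(X,W)\to p_c\cdot\ca{E}$ directly from the cut-off function. The paper instead exploits the Morita-type identification $G\ltimes C_0(G)\cong\ca{K}(L^2(G))$ to realize $[c]$ as the rank-one projection $P_{\sqrt{c}}$ onto $\bb{C}\sqrt{c}$, works in the unbounded picture via Kucerovsky's criterion (Proposition~\ref{Kuc}), and matches the two indices through a concrete Hilbert-module isomorphism $\Phi\colon G\ltimes_{-\tau}\bb{C}\to G\ltimes_{\tau}\bb{C}$. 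The paper's route is deliberately constructive and rank-one-projection-based because the argument must later be imitated over $\Omega T_0$, where no cut-off function, no Haar measure, and no honest function algebra exist, but where the projection $P_\Xi$ \emph{does} exist. Your general compression argument, while correct for $\Gamma\curvearrowright X$ in the locally compact setting, would not literally carry over there, a point you yourself flag at the end.

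Two small inaccuracies in your writeup are worth fixing. First, $F^2-1=-(1+D^2)^{-1}$ is not globally ($\Gamma$-)compact when $X$ is noncompact; what cocompactness gives you is the existence of the cut-off function, while the well-definedness of the $KK_\Gamma$-class comes from the usual local compactness condition $f(1+D^2)^{-1}\in\ca{K}$ for $f\in C_0(X)$. Second, the unitary $V$ intertwines the bounded transform of $D$ with $\widetilde{F}|_{p_c\cdot\ca{E}}$ only \emph{modulo} $(\Gamma\ltimes\bb{C})$-compacts: since $V(s)(\gamma)=\sqrt{c}\cdot\gamma.s$, the commutator with $\sqrt{c}$ intervenes, and $\widetilde{F}$ need not preserve $p_c\cdot\ca{E}$ exactly, so the cycle identification holds up to compact perturbation and operator homotopy, which is of course sufficient for equality in $KK$.
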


Unfortunately, our previous work does not corresponds to Kasparov's. More precisely, according to \cite{Kas15}, a $K$-homology class of a crossed product algebra, is presented by a transversally elliptic operators. So, even if we believe that ``$LT\ltimes_{-\tau} C_0(\ca{M})$'' is truly the crossed product algebra, we only get the following. Therefore, we need more sophisticated ($KK$-theoretical) construction.

\begin{`thm'}
Since $(\ca{H},\ca{D})\in KK(LT\ltimes_{-\tau} C_0(\ca{M}),\bb{C})$, $\ca{D}$ is at least $LT$-transversally elliptic operator on $\ca{M}$.
\end{`thm'}


To overcome such problem, it must be the best to define another $C^*$-algebra which can plays a role of $C_0(\ca{M})$, and study the assembly map. But such an algebra has never constructed, and it seems too difficult.
As the (probably) second best, we will construct the ``latter half'' of the assembly map, and compare it with the analytic index.



\begin{thm}[Main result]\label{Main Result}
Let $\ca{M}$, $\ca{L}$, $\ca{S}$, $\ca{H}$ and $\ca{D}$ be as in Theorem 1.2. We regard the pair $x:=(\ca{H},\ca{D})$ as a ``virtual $\tau$-twisted $LT$-equivariant $K$-homology class of $KK_{LT}^\tau(C_0(\ca{M}),\bb{C})$''.
We can construct the followings:
\begin{itemize}
\item a $C^*$-algebra which can be regarded as $LT\ltimes C_0(\ca{M})$,
\item an element ``$j^{LT}_\tau(x)$''$\in KK(LT\ltimes C_0(\ca{M}),LT\ltimes_\tau \bb{C})$ directly, without the $K$-homology group $KK_{LT}^\tau(C_0(\ca{M}),\bb{C})$, and
\item a $K$-theory class $[c]\in KK(\bb{C},LT\ltimes C_0(\ca{M}))$ which plays a role of the Mishchenko line bundle.
\end{itemize}
These objects enable us to define a $KK$-theoretical index $\mu^{LT}_\tau(x)$.

Moreover, we can define an analytic index $\ind_{LT\ltimes_\tau\bb{C}}(x)\in KK(\bb{C},LT\ltimes_\tau \bb{C})$, and it coincides with the $KK$-theoretical index $\mu^{LT}_\tau(x)$.

\end{thm}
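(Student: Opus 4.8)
The plan is to mimic Kasparov's classical construction of the assembly map while working directly with the $C^*$-algebras built in \cite{T}, since no $LT$-equivariant $K$-homology group is available. First I would pin down the candidate for ``$LT\ltimes C_0(\ca{M})$'': this is the \emph{untwisted} crossed product, and the point is that the spectral triple data $(A,\ca{H},\ca{D})$ from Theorem 1.2 are naturally modules over it once one remembers that $A$ already plays the role of the $-\tau$-twisted crossed product. I would construct the Hilbert $C^*$-module $\ca{E}_\tau$ over $LT\ltimes_\tau\bb{C}$ by imitating Definition \ref{def of a-ind}: equip (a dense subspace of) $\ca{H}$ with an $LT\ltimes_\tau\bb{C}$-valued inner product of the form $\innpro{s_1}{s_2}{}(l):=\innpro{s_1}{l.s_2}{\ca{H}}$, twisted by the cocycle $\tau$, and a compatible right action, then complete. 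The operator $\ca{D}$, being $LT^\tau$-equivariant, descends to a regular self-adjoint operator on $\ca{E}_\tau$; together with the left action of ``$LT\ltimes C_0(\ca{M})$'' this produces the class $j^{LT}_\tau(x)\in KK(LT\ltimes C_0(\ca{M}),LT\ltimes_\tau\bb{C})$. Here one must check the Kasparov axioms — in particular that commutators $[\ca{D},a]$ are bounded and that $a(1+\ca{D}^2)^{-1}$ is compact \emph{relative to the module structure}, which is where the properness of $\Phi$ and the positive-energy condition do the work of the missing cocompactness.

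Next I would build the Mishchenko-type class $[c]\in KK(\bb{C},LT\ltimes C_0(\ca{M}))$. Since there is no genuine cut-off function on $\ca{M}$ (no $LT$-invariant measure, and $\ca{M}$ is infinite-dimensional), the honest move is to define the rank-one projection directly: $[c]$ is the class of the finitely generated projective module given by a projection $p\in LT\ltimes C_0(\ca{M})$ (or its multiplier algebra) defined formally by $p(l)=\sqrt{c}\cdot\sqrt{l.c}$, where the ``$\sqrt c$'' is to be interpreted through the $L\fra{t}^*$-component of the construction in \cite{T} — effectively a Gaussian-type damping along the moment map direction that replaces the compactly supported cut-off. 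I would verify $p=p^*=p^2$ by the same cocycle computation as in the finite-dimensional case, the twist cancelling because $[c]$ lives over the untwisted algebra. Then $\mu^{LT}_\tau(x):=[c]\otimes_{LT\ltimes C_0(\ca{M})} j^{LT}_\tau(x)\in KK(\bb{C},LT\ltimes_\tau\bb{C})$ by the Kasparov product.

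For the comparison with the analytic index, I would first make $\ind_{LT\ltimes_\tau\bb{C}}(x)$ precise: it is the class of the $\bb{Z}/2$-graded Hilbert $LT\ltimes_\tau\bb{C}$-module obtained by completing $\ca{H}$ (or its smooth part) under the $LT\ltimes_\tau\bb{C}$-valued inner product above, with the self-adjoint Fredholm operator $\ca{D}(1+\ca{D}^2)^{-1/2}$ — i.e. exactly $\ca{E}_\tau$ viewed as an element of $KK(\bb{C},LT\ltimes_\tau\bb{C})$ after forgetting the left $C_0(\ca{M})$-action. The identity $\ind_{LT\ltimes_\tau\bb{C}}(x)=\mu^{LT}_\tau(x)$ then amounts to showing that taking the Kasparov product with $[c]$ over ``$LT\ltimes C_0(\ca{M})$'' reproduces this module. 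Concretely I would exhibit an explicit isomorphism of Kasparov modules: the module underlying $[c]\otimes j^{LT}_\tau(x)$ is $p\cdot\big(LT\ltimes C_0(\ca{M})\big)\otimes_{j}\ca{E}_\tau$, and the map $s\otimes\xi\mapsto s\cdot\xi$ (using the left action of ``$LT\ltimes C_0(\ca{M})$'' on $\ca{E}_\tau$) should land isometrically onto the completion of $\ca{H}$ described above, intertwining the operators; the cut-off condition $\int_{LT}c(l.x)\,dl=1$ (reformulated as $p$ being a ``rank-one projection along $LT$'') is exactly what makes this map isometric. This is the classical argument of Kasparov and of \cite{Kas15}, transplanted; the work is checking that every formal manipulation survives the absence of Haar measure on $LT$ and of a measure on $\ca{M}$, which the $C^*$-algebraic reformulations in \cite{T} are designed to permit.

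The main obstacle I expect is establishing the ``relative compactness'' axiom for $j^{LT}_\tau(x)$, i.e. that $a(1+\ca{D}^2)^{-1}$ is a compact operator on the Hilbert module $\ca{E}_\tau$ for $a$ in ``$LT\ltimes C_0(\ca{M})$''. In the finite-dimensional setting this follows from ellipticity plus cocompactness of the support after cutting off; here neither the base nor the group is locally compact in the usual sense, so one must instead feed in, very carefully, the spectral-triple property from Theorem 1.2 together with the properness of $\Phi:\ca{M}\to L\fra{t}^*$ and the positive-energy decomposition of $\ca{H}$, to show that the relevant resolvents lie in the compacts of the module. A secondary difficulty is well-definedness of $[c]$ — i.e. making the formal Gaussian ``$\sqrt c$'' an actual element (or multiplier) of the untwisted crossed product with $p^2=p$ — since all the subtleties about missing measures are concentrated there.
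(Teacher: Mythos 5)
There is a genuine gap in the proposal, concentrated in two places.

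First, you never use the factorization $\ca{M}\cong\widetilde{M}\times\Omega T_0$ of Proposition~4.3, which the paper treats as indispensable: the $T\times\Pi_T\curvearrowright\widetilde{M}$ part is finite-dimensional and handled by the classical theory (Sections~2--3), and all the genuinely new work is isolated on the model factor $\Omega T_0\curvearrowright\Omega T_0$. Without this reduction you are trying to build everything at once on $\ca{M}$, and you then have no access to the explicit finite-dimensional formulas of Section~3 (Propositions~\ref{j-hom for locally compact}, \ref{cut off=proj}, \ref{value of the assembly map}) that the infinite-dimensional constructions are designed to imitate.

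Second, and more fundamentally, the module $\ca{E}_\tau$ you propose for $j^{LT}_\tau(x)$ is the \emph{analytic index} module of Definition~\ref{def of a-ind}, not the $j$-homomorphism module. The $j$-homomorphism module is the completion of $C_c(\Gamma,E)$, not of $E$: the paper's model computation (Proposition~\ref{j-hom for locally compact}) shows that for the circle-group model $j^G_\tau(x)$ lives on $L^2(G)\otimes S_G\otimes(G\ltimes_\tau\bb{C})$, which carries a nontrivial left action of $G\ltimes C_0(G)\cong\ca{K}(L^2(G))$; there is an extra tensor factor $L^2(G)$ that simply is not present in the completion of $\ca{H}$ under an $LT\ltimes_\tau\bb{C}$-valued inner product. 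Correspondingly, in the infinite-dimensional case the paper has to construct a genuinely new Hilbert space $L^2(\Omega T_0)=\varinjlim L^2(\bb{R}^{2k})$ (inductive limit along Gaussian insertions $\Xi_{\sigma_k}$), a new algebra $\Omega T_0\ltimes C_0(\Omega T_0)=\varinjlim\ca{K}(L^2(\bb{R}^{2k}))$, and a new Dirac operator $D$ on $L^2(\Omega T_0)\otimes S_{\Omega T_0}$, none of which come from $\ca{H}$ or from $A\cong LT\ltimes_{-\tau}C_0(\ca{M})$. Your plan tacitly identifies $j^{LT}_\tau(x)$ with the analytic index from the outset, so the final "comparison" $[c]\otimes j^{LT}_\tau(x)=\ind(x)$ is circular rather than a theorem: the classical assembly-map identity (Fact~1.6) is being used as a definition.

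Related to this, the key technical device in the paper is entirely absent from your plan: the choice of a decay sequence $\{\sigma_k\}$ with $\sum_k\sqrt{k}\,\sigma_k<\infty$ (condition~\ref{eq:behavior of sigma}) is what makes $D$ and $D^2$ convergent operators (Proposition~\ref{convergence}), makes the commutator and resolvent-compactness axioms verifiable (Theorem~\ref{j-hom for Omega T0}), and simultaneously determines the vector $\Xi=\bigotimes\Xi_{\sigma_k}$ defining $[c]=P_\Xi$ (so $[c]$ is not a Gaussian damping along the moment map: it is the rank-one projection in the inductive-limit algebra, matching Lemma~\ref{cut off=proj}). You correctly identify relative compactness as the hard point, but without the inductive-limit algebra and the quantitative control from $\{\sigma_k\}$ and the Weitzenb\"ock formula, ``feed in properness and positive energy'' has no concrete content and would not close.
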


\begin{rmk}
We summarize here what will be (or has been) truly constructed, and what has been only ``virtually constructed''.
For example, the pair $(\ca{H},\ca{D})$ has been truly constructed, and we regard it as an ``element of $KK_{LT}^\tau(C_0(\ca{M}),\bb{C})$''. But the function algebra has not been defined, hence the $K$-homology group is virtual. We use several notations which will be defined later.

{\bf Truly constructed}: $L^2(\ca{M},\ca{S})$, $\ca{D}$, $LT\ltimes_{\pm\tau} \bb{C}$, $LT\ltimes C_0(\ca{M})$, $[c]\in KK(\bb{C},LT\ltimes C_0(\ca{M}))$, $j^{LT}_\tau(x)\in KK(LT\ltimes C_0(\ca{M}),LT\ltimes_\tau \bb{C})$ and $\ind_{LT\ltimes_\tau\bb{C}}(x)\in KK(\bb{C},LT\ltimes_\tau \bb{C})$.

{\bf Virtually constructed}: $C_0(\ca{M})$, $j^{LT}_\tau:KK(C_0(\ca{M}),\bb{C})\to KK(LT\ltimes C_0(\ca{M}),LT\ltimes_\tau\bb{C})$, a cut-off function $c: \ca{M}\to \bb{C}$, and related objects including $KK_{LT}^\tau(C_0(\ca{M}),\bb{C})$.

\end{rmk}

This paper is organized as follows.

In Section 2, we will prepare several $KK$-theoretical matters. In particular, we study the assembly map for a $U(1)$-central extension group. For a group $\Gamma$, a central extension group $\Gamma^\tau$ itself is also a group, hence we can apply the assembly map construction for the whole group $\Gamma^\tau$. However, if a $K$-homology cycle satisfies a certain condition, we do not need to deal with the whole information of the crossed product, and the group $C^*$-algebra. This observation seems quite natural and not highly non-trivial, but it will play an essential role in Section 4. 

In Section 3, as a model of infinite-dimensional case, we will explicitly describe the $j$-homomorphism for $G^\tau\curvearrowright G$, and the Mishchenko line bundle, for a locally compact abelian Lie group $G$.

In Section 4, we will study the infinite-dimensional case. Firstly, we will divide the problem into two parts, just as in \cite{T}. After the brief review of \cite{T}, we will construct several objects, and prove the main result.

\begin{rmks}\label{ass. of grp}
$(1)$ To avoid annoying and non-essential problems, we will deal with locally compact, amenable and unimodular groups, except for $LT$ and related groups. Throughout this paper, $\Gamma$, $ \Gamma_1$ and $\Gamma_2$ are locally compact, amenable and unimodular groups, and $G$ is a locally compact, amenable and unimodular Lie group. So we do not distinguish full group $C^*$-algebras from reduced ones, and the Haar measures are always two-sided invariant. Let us notice that a $U(1)$-central extension of an amenable group is also amenable (\cite{Pie}).

$(2)$ We will use the graded language: $[D_1,D_2]:=D_1D_2-(-1)^{\deg(D_1)\deg(D_2)}D_2D_1$, and $\id\otimes D(u\otimes v)=(-1)^{\deg(D)}u\otimes Dv$.

\end{rmks}

\section{Preliminaries}

In this section, we prepare a better description of the assembly map for a $U(1)$-central extension group. For this aim, we recall several results about unbounded Kasparov modules. We also study the assembly map in the unbounded picture.

\subsection{Unbounded Kasparov modules}

We begin with a review of $KK$-theory in the unbounded picture introduced in \cite{BJ}. For simplicity, we assume that $A$ and $B$ are trivially graded.

\begin{dfn}
For $C^*$-algebras $A$ and $B$, an unbounded Kasparov $A$-$B$-module is a pair $(E,D)$ such that:
\begin{itemize}
\item $E$ is a $\bb{Z}_2$-graded countably generated Hilbert $B$-module equipped with an even $*$-homomorphism $A\to \ca{L}_B(E)$.  Using this homomorphism, we regard $E$ as an $A$-$B$-bimodule.
\item $D:\dom(D)\to E$ is a closed, self-adjoint, regular and adjointable $B$-module homomorphism (see below).
\item There exists a dense $*$-subalgebra $A_1$, for any $a\in A_1$, $a$ preserves $\dom(D)$, $[D,a]$ defines an element $\ca{L}_B(E)$, and $(1+D^2)^{-1}a$ defines an element $\ca{K}_B(E)$.
\end{itemize}
The set of homotopy classes of unbounded Kasparov $A$-$B$-modules is written as $\Psi(A,B)$ which is an abelian group with respect to the direct sum.

For $\Gamma$-$C^*$-algebras $A$ and $B$, an unbounded $\Gamma$-equivariant Kasparov $A$-$B$-module $(E,D)$ is a Kasparov $A$-$B$-module such that $\Gamma$ acts on $E$ satisfying that $\gamma.(aeb)=(\gamma.a)\cdot(\gamma.e)\cdot(\gamma.b)$ for $a\in A$, $e\in E$ and $b\in B$, and $D$ is $\Gamma$-equivariant. The set of homotopy classes of $\Gamma$-equivariant Kasparov $A$-$B$-modules is written as $\Psi_\Gamma(A,B)$.

\end{dfn}

\begin{rmks}
$(1)$ An operator $D:\dom(D)\to E$ is said to be regular, if $1+D^*D$ has dense range (\cite{Kuc}). Unlike the Hilbert space case, this condition is not automatic.

$(2)$ An adjointable operator is automatically $B$-linear and closable. But just like the Hilbert space case, self-adjointness is not automatic. In fact, as observed in \cite{Kuc}, if $D\pm i$ have dense range, $D$ is self-adjoint and regular.
\end{rmks}

\begin{fact}[\cite{BJ}]
Let $b:\bb{R}\to\bb{R}$ be a function defined by $b(x):=x/\sqrt{1+x^2}$. The correspondence $(E,D)\mapsto (E,b(D))$ defines a map $\fra{b}:\Psi(A,B)\to KK(A,B)$. 

The same formula defines a map $\Psi_\Gamma(A,B)\to KK_\Gamma(A,B)$.
\end{fact}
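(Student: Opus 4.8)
The plan is to verify that $\fra{b}(E,D):=(E,b(D))$ is an ordinary (bounded) Kasparov $A$-$B$-module, then that $\fra{b}$ is additive and respects homotopy, and finally that everything is compatible with a group action. Since $\|b\|_\infty=1$, the continuous functional calculus for the self-adjoint regular operator $D$ produces $b(D)\in\ca{L}_B(E)$ with $\|b(D)\|\le1$; as $b$ is real and odd, $b(D)$ is self-adjoint and odd for the grading, so $(b(D)-b(D)^*)a=0$ for all $a$. Moreover $1-b(D)^2=(1+D^2)^{-1}$ by functional calculus, hence $(1-b(D)^2)a=(1+D^2)^{-1}a\in\ca{K}_B(E)$ by hypothesis. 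The only non-formal point is compactness of $[b(D),a]$ for $a$ in the dense $*$-subalgebra $A_1$.

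For this, first note $(1+D^2)^{-1/2}a\in\ca{K}_B(E)$, since $\bigl((1+D^2)^{-1/2}a\bigr)^*\bigl((1+D^2)^{-1/2}a\bigr)=a^*(1+D^2)^{-1}a$ is a product of a bounded operator and a compact one, and $T^*T\in\ca{K}_B(E)$ forces $T\in\ca{K}_B(E)$. Next, for $N>0$ set $b_N(t):=\tfrac1\pi\int_0^N t\,(1+\mu+t^2)^{-1}\mu^{-1/2}\,d\mu$; one checks $b_N\in C_0(\bb{R})$, $\|b_N\|_\infty\le1$, $b_N\to b$ pointwise, and $t\mapsto b_N(t)(1+t^2)^{1/2}$ is bounded. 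By functional calculus $b_N(D)=\tfrac1\pi\int_0^N D(1+\mu+D^2)^{-1}\mu^{-1/2}\,d\mu$ with the integral converging in norm, and $b_N(D)a=\bigl(b_N(D)(1+D^2)^{1/2}\bigr)\bigl((1+D^2)^{-1/2}a\bigr)\in\ca{K}_B(E)$; taking adjoints $ab_N(D)\in\ca{K}_B(E)$, hence $[b_N(D),a]\in\ca{K}_B(E)$. Writing $R_\mu:=(1+\mu+D^2)^{-1}$ and using $a\,\dom(D)\subseteq\dom(D)$, $[D,a]\in\ca{L}_B(E)$, the identity $[R_\mu,a]=-R_\mu\bigl(D[D,a]+[D,a]D\bigr)R_\mu$ and $D^2R_\mu=1-(1+\mu)R_\mu$, a direct computation gives
$$[b_N(D),a]=\frac1\pi\int_0^N\bigl((1+\mu)R_\mu[D,a]R_\mu-DR_\mu[D,a]DR_\mu\bigr)\mu^{-1/2}\,d\mu.$$
From $\|R_\mu\|\le(1+\mu)^{-1}$ and $\|DR_\mu\|\le\tfrac12(1+\mu)^{-1/2}$ the integrand has norm $\le C(1+\mu)^{-1}\|[D,a]\|$, which is $\mu^{-1/2}$-integrable on $(0,\infty)$, so these integrals converge in norm as $N\to\infty$ to a fixed operator $F$. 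Since $b_N(D)\to b(D)$ strongly, $[b_N(D),a]\to[b(D),a]$ strongly; as a strong limit coincides with any norm limit, $[b(D),a]=F$, a norm limit of the compact operators $[b_N(D),a]$, hence compact. Thus $(E,b(D))$ is a Kasparov module.

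Additivity of $\fra{b}$ is immediate from $b(D_1\oplus D_2)=b(D_1)\oplus b(D_2)$. For homotopy invariance, a homotopy in $\Psi(A,B)$ is an unbounded Kasparov $A$-$C([0,1],B)$-module $(\tilde E,\tilde D)$; the bounded transform commutes with pushforward along the endpoint evaluations $C([0,1],B)\to B$ (naturality of the functional calculus for regular operators), so $\fra{b}(\tilde E,\tilde D)$ restricts at the endpoints to the bounded transforms of the endpoints, and $\fra{b}$ descends to a homomorphism $\Psi(A,B)\to KK(A,B)$. In the equivariant setting the $\Gamma$-action on $E$ is unchanged, and since $D$ is $\Gamma$-equivariant, $\gamma\cdot b(D)=b(\gamma\cdot D)=b(D)$; thus the continuity condition for an equivariant bounded cycle holds trivially, $(E,b(D))\in KK_\Gamma(A,B)$, and the same formula defines $\fra{b}\colon\Psi_\Gamma(A,B)\to KK_\Gamma(A,B)$.

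The step I expect to be the main obstacle is the commutator estimate: one must push the unbounded regular operator $D$ through its resolvents $R_\mu$ with careful bookkeeping of domains, justify that the displayed formula for $[b_N(D),a]$ is a genuine identity of adjointable operators, and then combine the norm-convergent resolvent integral with the only strongly convergent approximation $b_N(D)\to b(D)$ to extract compactness of $[b(D),a]$. Regularity of $D$ is indispensable throughout: it is what makes the functional calculus available and the operators $R_\mu(1+D^2)$, $DR_\mu$ and $b_N(D)(1+D^2)^{1/2}$ bounded in the first place.
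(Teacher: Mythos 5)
The paper states this as a Fact cited from Baaj--Julg \cite{BJ} and does not supply its own proof, so there is no in-paper argument to compare against. Your sketch is the standard Baaj--Julg argument: the integral representation $b(t)=\tfrac{1}{\pi}\int_0^\infty t(1+\mu+t^2)^{-1}\mu^{-1/2}\,d\mu$, truncation to $b_N$, the resolvent-commutator identity giving a norm-convergent integral bounded by $C(1+\mu)^{-1}\mu^{-1/2}\|[D,a]\|$, and identification of the norm limit with $[b(D),a]$ via strong convergence; the preliminary step $(1+D^2)^{-1/2}a\in\ca{K}_B(E)$ via $T^*T$ compact $\Rightarrow T$ compact is also the usual one. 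Your checks on $1-b(D)^2=(1+D^2)^{-1}$, oddness/self-adjointness of $b(D)$, additivity, homotopy via naturality of the functional calculus under the evaluation maps $C([0,1],B)\to B$, and the (trivial, since $D$ is exactly $\Gamma$-invariant) equivariant case are all correct. One small remark: for the strong convergence $b_N(D)\to b(D)$ on a Hilbert $B$-module it is not enough to invoke pointwise bounded convergence as in the Hilbert-space spectral theorem; what you actually need (and what holds here) is uniform convergence on compacta together with the uniform bound $\|b_N\|_\infty\le 1$, which in the module setting yields strong convergence by first checking it on $g(D)\xi$ with $g\in C_c(\bb{R})$ and then using density. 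With that caveat the argument is complete and matches the cited reference.
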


One of the merits to study unbounded Kasparov modules is the simple formula for the exterior tensor product. Let $\otimes_\bb{C}$ denote the exterior tensor product in $KK$-groups.

\begin{pro}\label{BaajJulg}
For $(E_1,D_1)\in\Psi(A,B)$ and $(E_2,D_2)\in \Psi(C,D)$, let $(E_1,D_1)\overline{
\otimes_\bb{C}} (E_2,D_2):=(E_1\otimes E_2,D_1\otimes\id+\id\otimes D_2)$. Then
$$\fra{b}(E_1,D_1)\otimes_\bb{C} \fra{b}(E_2,D_2)= \fra{b}\bigl((E_1,D_1)\overline{
\otimes_\bb{C}} (E_2,D_2)\bigr)$$
in $KK(A\otimes C,B\otimes D)$. Notice that $\id\otimes D_2$ is the graded tensor product.
\end{pro}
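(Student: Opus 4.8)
The plan is to verify the Baaj–Julg formula for the exterior product of unbounded Kasparov modules, following the original argument in \cite{BJ} adapted to the graded and equivariant setting. First I would check that $(E_1,D_1)\overline{\otimes_\bb{C}}(E_2,D_2)$ is indeed a well-defined unbounded Kasparov $A\otimes C$-$B\otimes D$-module: the operator $\ca{D}:=D_1\otimes\id+\id\otimes D_2$ on the algebraic tensor product $\dom(D_1)\otimes\dom(D_2)$ is symmetric (the two summands graded-commute in the appropriate sense because of the grading conventions in Remark \ref{ass. of grp}(2)), and one shows it is essentially self-adjoint and regular by exhibiting dense range for $\ca{D}\pm i$, using the identity $(1+\ca{D}^2)^{-1}=(1+D_1^2\otimes 1+1\otimes D_2^2)^{-1}$ together with the fact that $(1+D_1^2)^{-1}$ and $(1+D_2^2)^{-1}$ are positive contractions; the compactness condition on $(1+\ca{D}^2)^{-1}(a\otimes c)$ for $a\in A_1$, $c\in C_1$ follows because this resolvent factors through the exterior tensor product of the individual compact resolvents. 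The commutator condition is immediate since $[\ca{D},a\otimes c]=[D_1,a]\otimes c\pm a\otimes[D_2,c]$.

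Next I would reduce the equality of $KK$-classes to a statement about bounded operators. Applying $\fra{b}$, one must show that the Kasparov module $(E_1\otimes E_2, b(\ca{D}))$ represents the product $\fra{b}(E_1,D_1)\otimes_\bb{C}\fra{b}(E_2,D_2)$. By the uniqueness part of the Kasparov product (the Connes–Skandalis criterion), it suffices to check that $b(\ca{D})$ is an $F_1$-connection for $b(D_2)$ (equivalently that the pair $(F_1\otimes 1, b(\ca{D}))$, with $F_1=b(D_1)$, satisfies the connection condition with respect to the second factor), and a positivity condition $F_1(\text{---})F_1\otimes 1 + 1\otimes b(D_2)$-compatibility, i.e. the graded commutator $[b(\ca{D}), F_1\otimes 1]$ is positive modulo compacts on the relevant submodule. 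The heart of the computation is the integral formula $b(\ca{D})=\frac{1}{\pi}\int_0^\infty (\ca{D}+\lambda^2+1+\ca{D}^2\lambda^{-2})^{-1}\,d\lambda$ type resolvent expansion (as in \cite{BJ}), which lets one compare $b(D_1\otimes 1+1\otimes D_2)$ with $b(D_1)\otimes 1$ and $1\otimes b(D_2)$ and show the differences lie in the correct ideals.

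I would then carry out the connection and positivity checks. For the connection condition one writes, for $\xi\in E_1$ in a suitable dense subset, $T_\xi(b(\ca{D})) - (b(D_1)\otimes 1)$-related operator applied after $T_\xi$, and uses the resolvent expansion to see that the discrepancy is a norm-limit of elements involving $(1+D_1^2)^{-1/2}$, which gives compactness after multiplying by elements of $A_1$. For positivity, the graded commutator $b(\ca{D})(b(D_1)\otimes 1)+(b(D_1)\otimes 1)b(\ca{D})$ is, modulo compacts, equal to $2b(D_1)^2\otimes 1\geq 0$, again by the resolvent estimate controlling cross terms. Finally, homotopy invariance and additivity of $\fra{b}$ (Fact following the definition) handle the passage from these pointwise/dense statements to the equality of classes; the equivariant version follows verbatim since all constructions are natural in $\Gamma$.

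The main obstacle I expect is the analytic control of the bounded transform of the sum $\ca{D}=D_1\otimes 1+1\otimes D_2$: establishing regularity and self-adjointness of $\ca{D}$ on the exterior tensor product of Hilbert modules (where, unlike Hilbert spaces, regularity is genuinely an extra hypothesis, per Remark 2.3(1)), and then proving that $b(\ca{D})-b(D_1)\otimes 1$ and the analogous correction terms lie in $\ca{K}_{B\otimes D}(E_1\otimes E_2)$ after multiplication by $A_1\otimes C_1$. This is exactly the technical core of \cite{BJ}, and I would cite it rather than reprove it, indicating only how the grading conventions of Remark \ref{ass. of grp}(2) make the cross terms in $\ca{D}^2$ vanish so that $\ca{D}^2=D_1^2\otimes 1+1\otimes D_2^2$ on the algebraic domain, which is what makes all the resolvent estimates go through.
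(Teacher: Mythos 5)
The paper itself gives no proof of Proposition~\ref{BaajJulg}: it is stated as a known result, attributed implicitly to Baaj--Julg \cite{BJ}, and the text moves straight on to the notational convention $\overline{\otimes_\bb{C}}=\otimes_\bb{C}$. So there is no ``paper proof'' to compare against; the appropriate move is exactly what you do at the end, namely to cite \cite{BJ} for the analytic core.

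Your sketch does follow the standard Baaj--Julg argument, and it correctly isolates the two genuine issues: (i) self-adjointness and regularity of $\ca{D}=D_1\otimes\id+\id\otimes D_2$ on the exterior tensor product module, which is nontrivial because regularity is an honest hypothesis in the Hilbert-module setting, and (ii) the verification that the cross terms in $\ca{D}^2$ vanish by the graded sign conventions, so that $\ca{D}^2=D_1^2\otimes 1+1\otimes D_2^2$ and the resolvent estimates decouple. A few small imprecisions are worth flagging. The resolvent integral you write down is garbled; the formula used in \cite{BJ} is of the type $b(\ca{D})=\frac{2}{\pi}\int_0^\infty \ca{D}\,(1+\lambda^2+\ca{D}^2)^{-1}\,d\lambda$, and it is this representation that lets one compare $b(\ca{D})$ with $b(D_1)\otimes 1$ and $1\otimes b(D_2)$ modulo compacts. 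Also, the identity $b(\ca{D})(b(D_1)\otimes 1)+(b(D_1)\otimes 1)b(\ca{D})=2b(D_1)^2\otimes 1$ is not exact even modulo compacts --- the bounded transform does not intertwine anticommutators that cleanly --- but the anticommutator is indeed positive modulo compacts after localising with $A_1\otimes C_1$, which is what the Connes--Skandalis positivity condition asks for. With those caveats noted, the plan is sound and consistent with the reference the paper relies on; for the purposes of this paper nothing more than the citation is needed.
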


For this reason, we wright $\overline{\otimes_\bb{C}}$ as $\otimes_\bb{C}$ from now on.

The heart of $KK$-theory is the Kasparov product (\cite{JT} and \cite{Bla}). An unbounded version of the criterion to judge if a $KK$-cycle is a product of two other cycles, was studied in \cite{Kuc}.
The following is a weaker version of his result, which is enough for our purpose.

\begin{pro}[\cite{Kuc}]\label{Kuc}
An unbounded Kasparov module $(E_1\otimes_B E_2,D)\in \Psi(A,C)$ represents the Kasparov product of $(E_1,D_1)\in \Psi(A,B)$ and $(E_2,D_2)\in\Psi(B,C)$, if the following conditions are satisfied:
\begin{itemize}
\item for all $x$ in some dense sebset of $E_1$, the graded commutator
$$\bbbra{\begin{pmatrix} D & 0 \\ 0 & D_2 \end{pmatrix}, \begin{pmatrix} 0 & T_{e_1} \\ T_{e_1}^* & 0 \end{pmatrix}}$$
is bounded on $\dom(D)\oplus \dom(D_2)$, where $T_{e_1}:E_2\to E_1\otimes_B E_2$ is defined by $T_{e_1}(e_2):=e_1\otimes_B e_2$,
\item $\dom(D)\subseteq \dom(D_1)\otimes_B E_2$, and
\item $\innpro{D_1\otimes\id(e)}{D(e)}{}+ \innpro{D(e)}{D_1\otimes\id(e)}{}\geq 0$ for all $e\in\dom(D)$.
\end{itemize}

\end{pro}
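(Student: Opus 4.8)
The plan is to reduce the statement to the Connes--Skandalis characterization of the Kasparov product for \emph{bounded} Kasparov modules, and then to verify its two hypotheses from, respectively, the first two bullets and the last bullet of the statement. First, apply Baaj--Julg's bounded transform $\fra{b}$ everywhere: $(E_i,b(D_i))$ represents $\fra{b}(E_i,D_i)$ for $i=1,2$, and $(E_1\otimes_B E_2,b(D))$ represents $\fra{b}(E_1\otimes_B E_2,D)$ --- the last making sense precisely because $(E_1\otimes_B E_2,D)$ is assumed to be an unbounded Kasparov $A$--$C$-module. Now recall (\cite{Bla}) that $\fra{b}(E_1,D_1)\otimes_B\fra{b}(E_2,D_2)\in KK(A,C)$ is the unique class admitting a representative $(E_1\otimes_B E_2,F)$ for which
\begin{itemize}
\item[(i)] $F$ is a $b(D_2)$-connection: for every homogeneous $e_1$ in a dense subset of $E_1$, the graded commutator displayed in the first bullet of the statement is not merely bounded but \emph{compact} on $E_1\otimes_B E_2\oplus E_2$;
\item[(ii)] $a\,[\,b(D_1)\otimes\id,\ F\,]\,a^{*}\ \ge\ 0$ modulo $\ca{K}_C(E_1\otimes_B E_2)$ for every $a\in A$.
\end{itemize}
Hence it suffices to prove that $F:=b(D)$ satisfies (i) and (ii); then $\fra{b}(E_1\otimes_B E_2,D)=\fra{b}(E_1,D_1)\otimes_B\fra{b}(E_2,D_2)$, which is exactly the assertion. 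All computations below are to be carried out on $\dom(D)$ or on a suitable core adapted to the resolvents involved; this is legitimate since every operator in sight is closed, self-adjoint and regular, and since $\ca{K}$ is norm-closed it is enough to obtain the conclusions of (i) and (ii) for $e_1$ and $a$ ranging over the dense sets furnished by the hypotheses (and over the dense subalgebra $A_1$ of the Kasparov-module definition, in the case of (ii)).

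For (i), I would substitute the integral representation $b(D)=\tfrac{2}{\pi}\int_0^{\infty}D(1+\lambda^{2}+D^{2})^{-1}\,d\lambda$ and its analogue for $D_2$, so that
\[
b(D)\,T_{e_1}-(-1)^{\deg(e_1)}T_{e_1}\,b(D_2)\ =\ \tfrac{2}{\pi}\!\int_0^{\infty}\!\Bigl(D(1+\lambda^{2}+D^{2})^{-1}T_{e_1}-(-1)^{\deg(e_1)}T_{e_1}D_2(1+\lambda^{2}+D_2^{2})^{-1}\Bigr)d\lambda .
\]
Commuting each resolvent through $D$ and then, via the resolvent identity, across $T_{e_1}$, the integrand becomes a finite sum of terms each of which is the bounded operator $D\,T_{e_1}-(-1)^{\deg(e_1)}T_{e_1}D_2$ of the first bullet, flanked by resolvents of $D$ and $D_2$ supplying an integrable $O(\lambda^{-2})$ estimate; the domain inclusion $\dom(D)\subseteq\dom(D_1\otimes\id)\otimes_B E_2$ of the second bullet is what legitimizes moving $D_1\otimes\id$ past $T_{e_1}$ (the module action of $e_1$) in the terms where it appears. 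Compactness of the integrand, and hence of the integral, then follows by the standard approximate-unit argument using the compactness built into the definitions of the three Kasparov modules, exactly as in \cite{BJ} and \cite{Kuc}; taking adjoints gives the companion statement for $T_{e_1}^{*}$, and together these are precisely the compactness in (i).

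For (ii), write $G:=b(D_1\otimes\id)=b(D_1)\otimes\id$ and $F:=b(D)$, both odd, self-adjoint and of norm $\le 1$. Using the integral formula for each factor, the graded commutator $[G,F]=GF+FG$ expands into a double integral over $\lambda,\mu\ge 0$ of expressions built from the resolvents $(1+\mu^{2}+(D_1\otimes\id)^{2})^{-1}$ and $(1+\lambda^{2}+D^{2})^{-1}$ together with a single non-resolvent factor, the form $(D_1\otimes\id)\,D+D\,(D_1\otimes\id)$. By the third bullet --- and the grading convention of Remark~\ref{ass. of grp}, under which $D$ and $D_1\otimes\id$ are odd, so that this graded commutator \emph{is} the anticommutator appearing there --- that form is $\ge 0$ on $\dom(D)$, so the ``main term'' of the double integral is a positive operator flanked by resolvents, while every remaining term carries one extra resolvent factor. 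Compressing by $a$ and $a^{*}$ and invoking the compactness condition $(1+D^{2})^{-1}a\in\ca{K}_C(E_1\otimes_B E_2)$ for $a$ in the dense subalgebra $A_1\subseteq A$, every term other than the manifestly nonnegative one becomes compact, which yields $a[G,F]a^{*}\ge 0$ modulo $\ca{K}_C(E_1\otimes_B E_2)$, i.e.\ (ii).

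The third step is the genuine obstacle: one must control the double-integral expansion of $[G,F]$ term by term, justify each rearrangement on the chosen core, establish the uniform integrability needed to carry compactness through the $\lambda$- and $\mu$-integrations, and above all check that the form positivity of the third bullet truly survives the passage to the bounded transform --- the reduction and step (i) are comparatively routine once the resolvent bookkeeping is in place, but (ii) is delicate. All of this is carried out in \cite{Kuc}; the proposition above is a streamlined special case of the criterion proved there, with the lower bound in the third bullet taken to be $0$, and the proof I would write is a specialization of that argument.
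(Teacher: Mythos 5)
The paper contains no proof of this Proposition: it is imported verbatim from Kucerovsky \cite{Kuc}, with the remark that it is ``a weaker version of his result,'' and the text moves directly on to the next subsection. There is therefore no internal proof to compare your proposal against. Your sketch does, however, follow the same strategy as Kucerovsky's own argument --- apply the Baaj--Julg bounded transform, reduce to the Connes--Skandalis connection-plus-positivity characterization of the Kasparov product, and control $b(D)$ through its resolvent integral formula --- so it is a faithful outline of the cited source rather than an alternative route.
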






\subsection{Assembly maps for unbounded cycles}


We have two tasks in this subsection:
\begin{itemize}
\item to rewrite the $j$-homomorphism in the unbounded picture, and
\item deduce the product formula for the assembly map.
\end{itemize}





To rewrite the assembly map for unbounded Kasparov modules, we prepare the $j$-homomorphism for the unbounded model. Let $A$ and $B$ be $\Gamma$-$C^*$-algebras. The $j$-homomorphism $j^\Gamma:KK_\Gamma(A,B)\to KK(\Gamma\ltimes A,\Gamma\ltimes B)$ is defined as follows, in the bounded picture.

\begin{dfn}
Let $A$ and $B$ be $\Gamma$-$C^*$-algebras.
Let $(E,F)$ be a $\Gamma$-equivariant Kasparov $A$-$B$-module.
$C_c(\Gamma,E)$ has a $C_c(\Gamma,A)$-$C_c(\Gamma,B)$-bimodule structure as follows:
$$a*e(\gamma):=\int_\Gamma a(\gamma')\gamma'.\bra{e(\gamma'^{-1}\gamma)}d\gamma',$$
$$e*b(\gamma):=\int_\Gamma e(\gamma')\gamma'.\bra{b(\gamma'^{-1}\gamma)}d\gamma',$$
$$\innpro{e_1}{e_2}{\Gamma\ltimes B}(\gamma):=\int_\Gamma \innpro{e_1(\gamma')}{e_2(\gamma'\gamma)}{B}d\gamma'.$$
The $(\Gamma\ltimes A)$-$(\Gamma\ltimes B)$-bimodule $\Gamma\ltimes E$ is given by the completion with respect to $\innpro{\cdot}{\cdot}{\Gamma\ltimes B}$.

$\widetilde{F}(e)(\gamma)$ is defined by $F\bra{e\bra{\gamma}}$.

$j^\Gamma(E,F)$ is given by $(\Gamma\ltimes E,\widetilde{F})\in KK(\Gamma\ltimes A,\Gamma\ltimes B)$.
\end{dfn}

The unbounded version of this map can be described as follows.
\begin{lem}
For $(E,D)\in \Psi_\Gamma(A,B)$, $\fra{b}(\Gamma\ltimes E,\widetilde{D})=j^\Gamma(\fra{b}(E,D))$, where $\widetilde{D}e(\gamma)=D\bbbra{e(\gamma)}$ for $e\in C_c(\Gamma,E)$.
\end{lem}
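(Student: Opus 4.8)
The plan is to verify that the pair $(\Gamma\ltimes E,\widetilde{D})$ is a genuine unbounded Kasparov $(\Gamma\ltimes A)$-$(\Gamma\ltimes B)$-module, and then to compare its bounded transform with $j^\Gamma(\fra{b}(E,D))$ directly. For the first point I would check the three bullets in the definition of an unbounded Kasparov module: $\Gamma\ltimes E$ is a countably generated $\mathbb{Z}_2$-graded Hilbert $(\Gamma\ltimes B)$-module with an even $*$-homomorphism from $\Gamma\ltimes A$ by construction of $j^\Gamma$; the operator $\widetilde{D}$ with domain (the completion of) $C_c(\Gamma,\dom(D))$ is symmetric because $\widetilde{D}$ acts ``pointwise in $\gamma$'' and $\innpro{\cdot}{\cdot}{\Gamma\ltimes B}$ is built from $\innpro{\cdot}{\cdot}{B}$ by integrating over $\Gamma$; and a dense subalgebra of $\Gamma\ltimes A$ is provided by $C_c(\Gamma,A_1)$, on which $[\widetilde{D},a*-]$ and $(1+\widetilde{D}^2)^{-1}(a*-)$ can be analysed.

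The key technical step is self-adjointness and regularity of $\widetilde{D}$. I would argue that $\widetilde{D}\pm i$ have dense range (Remark after the definition, following \cite{Kuc}): since $D\pm i$ have dense range on $E$, for $e\in C_c(\Gamma,\ran(D\pm i))$ we can solve $\widetilde{D}f = e \mp i e$ pointwise in $\gamma$, and such $e$ are dense in $\Gamma\ltimes E$; a standard continuity/approximation argument in the $\gamma$-variable (using that $\Gamma$ is unimodular and $C_c(\Gamma,E)$ is dense) upgrades this to density of the actual range. Equivalently one can exhibit the resolvent $(\widetilde{D}\pm i)^{-1}$ as ``apply $(D\pm i)^{-1}$ pointwise'', which is adjointable on $\Gamma\ltimes E$ because it is so fibrewise and commutes with the $\Gamma$-convolution structure; this also gives regularity since $1+\widetilde{D}^*\widetilde{D}$ then has dense range. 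For the compactness condition $(1+\widetilde{D}^2)^{-1}(a*-)\in\ca{K}_{\Gamma\ltimes B}(\Gamma\ltimes E)$ one uses that $(1+D^2)^{-1}a\in\ca{K}_B(E)$ for $a\in A_1$ together with the fact that $j^\Gamma$ sends $\ca{K}_B(E)$-valued data to $\ca{K}_{\Gamma\ltimes B}(\Gamma\ltimes E)$, i.e. the same computation that makes $j^\Gamma$ well-defined on bounded cycles.

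Once $(\Gamma\ltimes E,\widetilde{D})\in\Psi(\Gamma\ltimes A,\Gamma\ltimes B)$ is established, the identity $\fra{b}(\Gamma\ltimes E,\widetilde{D})=j^\Gamma(\fra{b}(E,D))$ reduces to the assertion that $b(\widetilde{D})=\widetilde{b(D)}$ as operators on $\Gamma\ltimes E$, where $b(x)=x/\sqrt{1+x^2}$, because $j^\Gamma$ of the bounded cycle $(E,b(D))$ is by definition $(\Gamma\ltimes E,\widetilde{b(D)})$. This in turn follows from functional calculus: $\widetilde{D}$ is obtained from $D$ by applying $D$ fibrewise and completing, so for any bounded Borel function $g$ on $\mathbb{R}$ we have $g(\widetilde{D})=\widetilde{g(D)}$ — one checks this on resolvents $g(x)=(x\pm i)^{-1}$, where it was already verified above, and extends by the Stone--Weierstrass/bounded convergence argument standard for regular self-adjoint operators on Hilbert modules. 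Finally, both $(E,D)\mapsto(E,b(D))$ (Fact, \cite{BJ}) and $j^\Gamma$ are homotopy invariant, so the equality descends from cycles to classes.

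The main obstacle I anticipate is the self-adjointness/regularity of $\widetilde{D}$: ``pointwise'' operators on $C_c(\Gamma,E)$ are only densely defined, and making rigorous that the fibrewise resolvent of $D$ assembles to an adjointable operator on the \emph{completed} module $\Gamma\ltimes E$ — rather than just on $C_c(\Gamma,E)$ — requires care with the interplay between the $\Gamma$-convolution product, the inner product $\innpro{\cdot}{\cdot}{\Gamma\ltimes B}$, and the unboundedness of $D$. Everything else (the module structure, the dense subalgebra, the compactness condition, and the functional-calculus identity $b(\widetilde{D})=\widetilde{b(D)}$) is then routine given the corresponding facts for $(E,D)$ and the already-known properties of the bounded $j$-homomorphism.
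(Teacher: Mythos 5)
Your proposal is correct and follows essentially the same route as the paper: verify that $(\Gamma\ltimes E,\widetilde{D})$ is a genuine unbounded Kasparov module, then conclude from the functional-calculus identity $b(\widetilde{D})=\widetilde{b(D)}$. The only cosmetic difference is that you establish self-adjointness and regularity jointly via density of the ranges of $\widetilde{D}\pm i$, whereas the paper checks regularity directly by showing $\id+\widetilde{D}^2$ has dense range (via $(\id+D^2)\otimes\id$ on $\dom(D^2)\otimes C_c(\Gamma)$); these are equivalent criteria and your version handles self-adjointness explicitly, which the paper leaves tacit.
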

\begin{proof}
If we verify that $(\Gamma\ltimes E,\widetilde{D})$ defines an unbounded Kasparov module, the result is clear from the formula $\widetilde{D_1D_2}=\widetilde{D_1}\widetilde{D_2}$, that is, $\widetilde{b(D)}=b(\widetilde{D})$.\\\\
%
%
%
\underline{$\widetilde{D}$ is closed, self-adjoint and regular:} 
We only check that $\widetilde{D}$ is regular, that is, $\id+\widetilde{D}^2$ has dense range. Let us notice that the restriction of $\widetilde{D}$ to $\dom(D)\otimes C_c(\Gamma)$ can be written as $D\otimes \id$, and $(\id+D^2)\otimes\id|_{(\dom(D^2))\otimes C_c(\Gamma)}$ has dense range in $\Gamma\ltimes E$, by a usual argument to approximate general functions by $C_c(\Gamma)$. Clearly the range of $\id+\widetilde{D}^2$ contains the range of $(\id+ D^2)\otimes\id$, hence $\widetilde{D}$ is regular.\\\\
\underline{There is a dense subalgebra $(\Gamma\ltimes A)_1$:} Let $A_1$ be a dense subalgebra of $A$ such that $[D,a]$ is bounded. The subalgebra $A_1\otimes C_c(\Gamma)\subseteq \Gamma\ltimes A$ satisfies the required condition: for $a\otimes f\in A_1\otimes C_c(\Gamma)$ and $e\in C_c(\Gamma,\dom(D))$,
\begin{align*}
[\widetilde{D},a\otimes f]e(\gamma)&=  [D,a] \int f(\gamma')\gamma'.\bigl(e(\gamma'^{-1}\gamma)\bigr)d\gamma',
\end{align*}
which is bounded with respect to $e$.\\\\
\underline{$(1+\widetilde{D}^2)^{-1}a$ is compact:} It is enough to verify the statement when $a$ is an element of $A\otimes C_c(\Gamma)$. In this case, the element $(1+\widetilde{D}^2)^{-1}a$ belongs to $C_c(\Gamma,\ca{K}_B(E))\subseteq \ca{K}_{\Gamma\ltimes B}(\Gamma\ltimes E)$.

\end{proof}

We use the same symbol $j^\Gamma$ for the corresponding map $\Psi_\Gamma(A,B)\to \Psi(\Gamma\ltimes A,\Gamma\ltimes B)$ given by the correspondence $(E,D)\mapsto (\Gamma\ltimes E,\widetilde{D})$.


\vspace{0.2cm}
Let us move to the second task. 

The $j$-homomorphism behaves well, under the tensor product.
\begin{lem}
Let $A_j$ and $B_j$ be $\Gamma_j$-$C^*$-algebras, for $j=1,2$. Under the assumption of Lemma 3.6 in \cite{T},
the following diagram commutes:
{\small 
$$\begin{CD}
\Psi_{\Gamma_1}(A_1,B_1)\otimes \Psi_{\Gamma_2}(A_2,B_2) @>\otimes_\bb{C}>> \Psi_{\Gamma_1\times \Gamma_2}(A_1\otimes A_2,B_1\otimes B_2) \\
@V{j^{\Gamma_1}\otimes j^{\Gamma_2}}VV @VVj^{\Gamma_1\times \Gamma_2}V \\
\Psi(\Gamma_1\ltimes A_1, \Gamma_1\ltimes B_1)\otimes \Psi(\Gamma_2\ltimes A_2, \Gamma_2\ltimes B_2) @>\otimes_\bb{C}>> \Psi((\Gamma_1\times \Gamma_2)\ltimes (A_1\otimes A_2), (\Gamma_1\times \Gamma_2)\ltimes (B_1\otimes B_2)).
\end{CD}$$
}

\end{lem}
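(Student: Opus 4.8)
The plan is to prove the stronger statement that the two composites of the diagram produce literally the same unbounded Kasparov module, via an explicit unitary of Hilbert modules, rather than merely homotopic ones. Write $\Gamma:=\Gamma_1\times\Gamma_2$, $A:=A_1\otimes A_2$, $B:=B_1\otimes B_2$, and fix $(E_j,D_j)\in\Psi_{\Gamma_j}(A_j,B_j)$ for $j=1,2$. Unwinding the two routes, one gets $\bigl(\Gamma\ltimes(E_1\otimes E_2),\,\widetilde{D_1\otimes\id+\id\otimes D_2}\bigr)$ on one side and $\bigl((\Gamma_1\ltimes E_1)\otimes(\Gamma_2\ltimes E_2),\,\widetilde{D_1}\otimes\id+\id\otimes\widetilde{D_2}\bigr)$ on the other, each an unbounded Kasparov module over the identified coefficient algebras $\Gamma\ltimes A\cong(\Gamma_1\ltimes A_1)\otimes(\Gamma_2\ltimes A_2)$ and $\Gamma\ltimes B\cong(\Gamma_1\ltimes B_1)\otimes(\Gamma_2\ltimes B_2)$. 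First I would record this identification of coefficient algebras: on the level of $C_c$-functions it is the map $(a_1\otimes a_2)(\gamma_1,\gamma_2)\mapsto a_1\otimes a_2$, which one checks is a $*$-homomorphism for the convolution products and extends to an isomorphism of $C^*$-algebras. Here this is clean because all groups in sight are amenable, so full and reduced crossed products coincide, and the hypothesis imported from Lemma 3.6 of \cite{T} controls the relevant $C^*$-tensor products.

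Next I would define $\Phi$ on the algebraic tensor product $C_c(\Gamma_1,E_1)\odot C_c(\Gamma_2,E_2)$ by $\Phi(e_1\otimes e_2)(\gamma_1,\gamma_2):=e_1(\gamma_1)\otimes e_2(\gamma_2)$, which lands in $C_c(\Gamma,E_1\otimes E_2)$. A Fubini computation using $\innpro{f}{g}{\Gamma\ltimes B}(\gamma)=\int_\Gamma\innpro{f(\gamma')}{g(\gamma'\gamma)}{B}\,d\gamma'$ and the factorization of the $B_1\otimes B_2$-valued inner product shows that, after the identification above, $\innpro{\Phi(e_1\otimes e_2)}{\Phi(e_1'\otimes e_2')}{}$ equals $\innpro{e_1}{e_1'}{\Gamma_1\ltimes B_1}\otimes\innpro{e_2}{e_2'}{\Gamma_2\ltimes B_2}$, i.e.\ $\Phi$ is isometric for the two module inner products. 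Since $C_c(\Gamma_1,E_1)\odot C_c(\Gamma_2,E_2)$ has dense image in $C_c(\Gamma,E_1\otimes E_2)$ in the inductive-limit topology, hence in the Hilbert-module completion, $\Phi$ extends to a unitary $U\colon(\Gamma_1\ltimes E_1)\otimes(\Gamma_2\ltimes E_2)\to\Gamma\ltimes(E_1\otimes E_2)$. An entirely parallel direct computation with $a*e(\gamma)=\int_\Gamma a(\gamma')\,\gamma'.e(\gamma'^{-1}\gamma)\,d\gamma'$ shows that $U$ intertwines the left actions of the identified coefficient algebras and respects the $\bb{Z}_2$-gradings (which are pointwise), so $U$ is an isomorphism of graded Hilbert bimodules.

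It then remains to transport the operators through $U$, and this is the only step I expect to require real care. On the subspace $C_c(\Gamma_1,\dom D_1)\odot C_c(\Gamma_2,\dom D_2)$ the equality $\widetilde{D_1\otimes\id+\id\otimes D_2}\circ\Phi=\Phi\circ(\widetilde{D_1}\otimes\id+\id\otimes\widetilde{D_2})$ is immediate from the pointwise definition of $\widetilde{(-)}$ and of the graded sum, keeping track of the sign in $\id\otimes D_2$. To upgrade this to an equality of the two \emph{closed} operators one needs that $C_c(\Gamma_1,\dom D_1)\odot C_c(\Gamma_2,\dom D_2)$ is a core for $\widetilde{D_1}\otimes\id+\id\otimes\widetilde{D_2}$: I would combine the core statement already extracted in the proof of the previous lemma (there $\widetilde{D_j}$ restricted to $\dom(D_j)\otimes C_c(\Gamma_j)$ is $D_j\otimes\id$, whose square gives dense range of $\id+\widetilde{D_j}^{\,2}$) with the standard fact that, for regular self-adjoint operators, the algebraic tensor product of cores is a core for the graded sum (via resolvents, or the regularity criterion underlying Proposition \ref{Kuc}). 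Since $U$ is unitary, carries this core into $\dom(\widetilde{D_1\otimes\id+\id\otimes D_2})$, and conjugates one operator into the other there, closedness of both forces $U$ to conjugate the closures as well; hence $U$ is an isomorphism of unbounded Kasparov modules and the diagram commutes in $\Psi(\Gamma\ltimes A,\Gamma\ltimes B)$. If one only needs equality of the images in $KK$, the core argument can be sidestepped: apply $\fra{b}$, push $j^\Gamma$ through $\fra{b}$ by the previous lemma and $\otimes_{\bb{C}}$ through $\fra{b}$ by Proposition \ref{BaajJulg}, and invoke the classical compatibility of $j^\Gamma$ with exterior products in the bounded picture.
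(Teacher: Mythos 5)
Your proof is correct and takes essentially the same route as the paper's: the paper's one-line argument rests on precisely the isomorphism $(\Gamma_1\ltimes E_1)\otimes(\Gamma_2\ltimes E_2)\cong(\Gamma_1\times\Gamma_2)\ltimes(E_1\otimes E_2)$, which you construct explicitly via $\Phi$, together with the Baaj--Julg description of the exterior product. You have simply supplied the details (isometry, bimodule intertwining, and the core/closure argument for transporting the operator) that the paper leaves as ``almost clear.''
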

\begin{proof}
It is almost clear from that $(\Gamma_1\ltimes E_1)\otimes (\Gamma_2\ltimes E_2)\cong (\Gamma_1\times \Gamma_2)\ltimes (E_1\otimes E_2)$ and Proposition \ref{BaajJulg}.

\end{proof}

The next step is to study Mishchenko line bundles. Let us recall the definition of them.

\begin{dfn}
Let $X$ be a complete Riemannian manifold equipped with a proper and cocompact action of $\Gamma$. Then there exists a smooth and compactly supported function $c:X\to \bb{R}_{\geq 0}$ such that
$\int_\Gamma c(\gamma.x)d\gamma=1$ for any $x\in X$. It defines an idempotent $[c]\in \Gamma\ltimes C_0(X)$ by the formula $\{[c](\gamma)\}(x)=\sqrt{c(x)c(\gamma^{-1}.x)}$.


In the $KK$-theoretical language, $[c]$ is given by $([c]*\bigl(\Gamma\ltimes C_0(X)\bigr),0)\in \Psi(\bb{C},\Gamma\ltimes C_0(X))$. This $K$-theory class is called the Mishchenko line bundle associated to $\Gamma\curvearrowright X$.

\end{dfn}

We can factorize the Mishchenko line bundle for $\Gamma_1\times \Gamma_2\curvearrowright X_1\times X_2$ as follows:

\begin{lem}
Let $c_1:X_1\to \bb{R}_{\geq0}$ and $c_2:X_2\to \bb{R}_{\geq0}$ be chosen cut-off functions. Then $c_1\otimes c_2:X_1\times X_2\to \bb{R}_{\geq0}$ defined by $c_1\otimes c_2(x_1,x_2)=c_1(x_1)c_2(x_2)$ is a cut-off function associated to $(\Gamma_1\times \Gamma_2)\curvearrowright (X_1\times X_2)$. 

Moreover, $[c_1\otimes c_2]=[c_1]\otimes_\bb{C} [c_2]\in KK(\bb{C},(\Gamma_1\times\Gamma_2)\ltimes C_0(X_1\times X_2))$ under the identification $\bigl(\Gamma_1\ltimes C_0(X_1)\bigr)\bigotimes \bigl(\Gamma_2\ltimes C_0(X_2)\bigr)\cong (\Gamma_1\times \Gamma_2)\ltimes C_0(X_1\times X_2)$.
\end{lem}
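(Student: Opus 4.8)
The plan is to verify the two assertions separately: first that $c_1 \otimes c_2$ is a cut-off function, and then that the induced $K$-theory classes factorize. The first part is a routine Fubini-type computation. Since $(\Gamma_1 \times \Gamma_2)$ acts on $X_1 \times X_2$ componentwise, we have $(\gamma_1,\gamma_2).(x_1,x_2) = (\gamma_1.x_1,\gamma_2.x_2)$, so
\[
\int_{\Gamma_1 \times \Gamma_2} (c_1 \otimes c_2)\bigl((\gamma_1,\gamma_2).(x_1,x_2)\bigr)\, d(\gamma_1,\gamma_2)
= \int_{\Gamma_1} c_1(\gamma_1.x_1)\, d\gamma_1 \cdot \int_{\Gamma_2} c_2(\gamma_2.x_2)\, d\gamma_2 = 1,
\]
using that the Haar measure on $\Gamma_1 \times \Gamma_2$ is the product of the Haar measures (legitimate since all groups here are unimodular by Remarks \ref{ass. of grp}). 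Smoothness and compact support of $c_1 \otimes c_2$ are immediate from the corresponding properties of $c_1$ and $c_2$, and nonnegativity is obvious. So $c_1 \otimes c_2$ is a valid cut-off function, and in particular the class $[c_1 \otimes c_2]$ makes sense.

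For the second part, I would work at the level of the idempotents and the explicit isomorphism $\bigl(\Gamma_1 \ltimes C_0(X_1)\bigr) \otimes \bigl(\Gamma_2 \ltimes C_0(X_2)\bigr) \cong (\Gamma_1 \times \Gamma_2) \ltimes C_0(X_1 \times X_2)$. Under this isomorphism the elementary tensor $f_1 \otimes f_2$ with $f_j \in C_c(\Gamma_j, C_0(X_j))$ corresponds to the function $(\gamma_1,\gamma_2) \mapsto \bigl((x_1,x_2) \mapsto f_1(\gamma_1)(x_1) f_2(\gamma_2)(x_2)\bigr)$. Applying this to the two idempotents: $[c_1](\gamma_1)(x_1) = \sqrt{c_1(x_1) c_1(\gamma_1^{-1}.x_1)}$ and similarly for $[c_2]$, their product corresponds to
\[
(\gamma_1,\gamma_2) \mapsto \Bigl((x_1,x_2) \mapsto \sqrt{c_1(x_1) c_1(\gamma_1^{-1}.x_1)}\,\sqrt{c_2(x_2) c_2(\gamma_2^{-1}.x_2)}\Bigr),
\]
which is precisely $\{[c_1 \otimes c_2]((\gamma_1,\gamma_2))\}(x_1,x_2) = \sqrt{(c_1 \otimes c_2)(x_1,x_2)\,(c_1 \otimes c_2)((\gamma_1,\gamma_2)^{-1}.(x_1,x_2))}$, again using the componentwise action. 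Thus the idempotents $[c_1] \otimes [c_2]$ and $[c_1 \otimes c_2]$ agree as elements of $(\Gamma_1 \times \Gamma_2) \ltimes C_0(X_1 \times X_2)$, hence define the same $K$-theory class. Equivalently, one checks that the associated Hilbert modules $[c_1 \otimes c_2] * \bigl((\Gamma_1 \times \Gamma_2) \ltimes C_0(X_1 \times X_2)\bigr)$ and the exterior tensor product $\bigl([c_1] * (\Gamma_1 \ltimes C_0(X_1))\bigr) \otimes \bigl([c_2] * (\Gamma_2 \ltimes C_0(X_2))\bigr)$ are isomorphic with the zero operator on each, so Proposition \ref{BaajJulg} (with trivial operators) identifies the exterior Kasparov product with the module defined by $[c_1 \otimes c_2]$.

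The only genuine subtlety — and hence the main point to be careful about — is the bookkeeping of the identification between the tensor product of crossed products and the crossed product by the product group: one must confirm that the convolution/module structures match up (the convolution on $(\Gamma_1 \times \Gamma_2) \ltimes C_0(X_1 \times X_2)$ restricted to elementary tensors factors as the product of the convolutions on the two factors), and that the $*$-operation is compatible so that the square root $\sqrt{c_j}$ computed in each factor really does correspond to the square root of $c_1 \otimes c_2$ in the product algebra. Once this compatibility is pinned down, everything else is formal. I would state it as an explicit unitary between the two Hilbert modules intertwining the right actions and the inner products, and then the equality of classes in $KK(\bb{C}, (\Gamma_1 \times \Gamma_2) \ltimes C_0(X_1 \times X_2))$ follows.
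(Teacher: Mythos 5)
Your proof is correct. Note that the paper actually provides no proof of this lemma at all --- it is stated and then the text moves on to ``Combining them, we get a useful formula'' --- so there is nothing to compare against; the lemma is treated as routine. Your two-step argument (a Fubini computation for the integral normalization condition, using that the Haar measure on $\Gamma_1\times\Gamma_2$ is the product measure, followed by an idempotent-level identification using $\sqrt{ab}=\sqrt{a}\sqrt{b}$ for nonnegative reals) supplies the missing details cleanly, and your final caveat about checking that the convolution and $*$-structures match under the isomorphism $\bigl(\Gamma_1\ltimes C_0(X_1)\bigr)\otimes\bigl(\Gamma_2\ltimes C_0(X_2)\bigr)\cong(\Gamma_1\times\Gamma_2)\ltimes C_0(X_1\times X_2)$ is exactly the right thing to be careful about. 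The alternative phrasing via an explicit unitary between the two Hilbert modules and Proposition~\ref{BaajJulg} with zero operators is also valid and perhaps slightly more in keeping with the unbounded-module framework the surrounding section sets up, but both routes are essentially the same computation.
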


Combining them, we get a useful formula.

\begin{pro}\label{product formula}
The following diagram commutes.
{\small 
$$
\begin{CD}
\Psi_{\Gamma_1}(C_0(X_1),\bb{C})\otimes \Psi_{\Gamma_2}(C_0(X_2),\bb{C})
@>\otimes_\bb{C}>> \Psi_{\Gamma_1\times \Gamma_2}(C_0(X_1\times X_2),\bb{C}) \\
@ Vj^{\Gamma_1}\otimes j^{\Gamma_2}VV @Vj^{\Gamma_1\times \Gamma_2}VV \\
\Psi(\Gamma_1\ltimes C_0(X_1),\Gamma_1\ltimes\bb{C})\otimes \Psi(\Gamma_2\ltimes C_0(X_2),\Gamma_2\ltimes\bb{C}) @>\otimes_\bb{C}>> \Psi((\Gamma_1\times \Gamma_2)\ltimes C_0(X_1\times X_2),(\Gamma_1\times \Gamma_2)\ltimes \bb{C}) \\
@V[c_1]\otimes_{\Gamma_1\ltimes C_0(X_1)}\bigotimes [c_2]\otimes_{\Gamma_2\ltimes C_0(X_2)}VV @V([c_1]\otimes_\bb{C}[c_2])\bigotimes_{(\Gamma_1\times \Gamma_2)\ltimes C_0(X_1\times X_2)}VV \\
\Psi(\bb{C},\Gamma_1\ltimes\bb{C})\otimes KK(\bb{C},\Gamma_2\ltimes\bb{C}) @>>\otimes_\bb{C}> \Psi(\bb{C},\Gamma_1\ltimes\bb{C}\otimes \Gamma_2\ltimes\bb{C}).
\end{CD}
$$
}
\end{pro}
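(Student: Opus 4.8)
The plan is to establish commutativity of the large diagram by verifying compatibility of each of the two horizontal rows separately, and then chasing an element around the square. More precisely, I would argue as follows.

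\textbf{Step 1: Reduce to the unbounded picture.} By Fact 2.7 (the Baaj--Julg map $\fra{b}$) and Proposition \ref{BaajJulg}, exterior products of bounded $KK$-cycles are computed by the simple formula $D_1\otimes\id+\id\otimes D_2$ on the tensor product module. Likewise Lemma 2.11 identifies $j^{\Gamma_1\times\Gamma_2}$ with the composition of $j^{\Gamma_1}\otimes j^{\Gamma_2}$ followed by the identification $(\Gamma_1\ltimes E_1)\otimes(\Gamma_2\ltimes E_2)\cong(\Gamma_1\times\Gamma_2)\ltimes(E_1\otimes E_2)$; this gives commutativity of the top square of the diagram on the nose, at the level of unbounded cycles. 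So the content is entirely in the bottom square, i.e.\ the compatibility of the two Kasparov products with the Mishchenko line bundles.

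\textbf{Step 2: Handle the bottom square via associativity of the Kasparov product.} Start with a pair $(E_1,D_1)\otimes_\bb{C}(E_2,D_2)$ in the top-left corner. Going down-then-right, we form $[c_i]\otimes_{\Gamma_i\ltimes C_0(X_i)}j^{\Gamma_i}(E_i,D_i)$ in $KK(\bb{C},\Gamma_i\ltimes\bb{C})$ and then take the exterior product over $\bb{C}$; by functoriality of $\otimes_\bb{C}$ under Kasparov products (the interchange law for the external product and the internal product, \cite{Kas88} or \cite{Bla}), this equals
$$\bigl([c_1]\otimes_\bb{C}[c_2]\bigr)\otimes_{(\Gamma_1\ltimes C_0(X_1))\otimes(\Gamma_2\ltimes C_0(X_2))}\bigl(j^{\Gamma_1}(E_1,D_1)\otimes_\bb{C}j^{\Gamma_2}(E_2,D_2)\bigr).$$
Going right-then-down instead, Lemma 2.11 (bottom row) turns $j^{\Gamma_1}(E_1,D_1)\otimes_\bb{C}j^{\Gamma_2}(E_2,D_2)$ into $j^{\Gamma_1\times\Gamma_2}\bigl((E_1,D_1)\otimes_\bb{C}(E_2,D_2)\bigr)$ under the algebra identification, and Lemma 2.13 turns $[c_1]\otimes_\bb{C}[c_2]$ into $[c_1\otimes c_2]$. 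Hence the two routes agree provided the identification $(\Gamma_1\ltimes C_0(X_1))\otimes(\Gamma_2\ltimes C_0(X_2))\cong(\Gamma_1\times\Gamma_2)\ltimes C_0(X_1\times X_2)$ intertwines everything in sight, which it does by construction; the remaining equality is just associativity/naturality of the Kasparov product. I would also note that since everything is being done in the unbounded picture, one should either invoke the unbounded associativity/interchange results or simply apply $\fra{b}$ and argue in $KK$ — the latter is cleaner, so I would state the diagram as commuting after $\fra{b}$, which is all that is used downstream.

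\textbf{Main obstacle.} The genuinely delicate point is the identification $\bigl(\Gamma_1\ltimes C_0(X_1)\bigr)\otimes\bigl(\Gamma_2\ltimes C_0(X_2)\bigr)\cong(\Gamma_1\times\Gamma_2)\ltimes C_0(X_1\times X_2)$ together with the corresponding identification of the Hilbert modules $\Gamma_1\ltimes E_1\otimes\Gamma_2\ltimes E_2\cong(\Gamma_1\times\Gamma_2)\ltimes(E_1\otimes E_2)$ — one must check that the cut-off idempotents and the operators $\widetilde{D_i}$ correspond correctly under these isomorphisms, and that the minimal (spatial) tensor products used agree with the crossed-product completions. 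This is where amenability and unimodularity (Remarks \ref{ass. of grp}) are used to avoid full-versus-reduced subtleties, and where one invokes ``the assumption of Lemma 3.6 in \cite{T}'' as in Lemma 2.11. Once that bookkeeping is pinned down, the commutativity is a formal consequence of the interchange law for external and internal Kasparov products, so I would keep the written proof short: cite Lemmas 2.11 and 2.13, cite the interchange law, and point to the algebra/module identifications as the only thing to check.

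\begin{proof}
Combining the previous lemmas. By Lemma 2.11 the top square commutes at the level of unbounded cycles, and by Lemma 2.13 the pair $([c_1],[c_2])$ maps to $[c_1\otimes c_2]$. The bottom square now follows from the interchange law relating the exterior product $\otimes_\bb{C}$ with the internal Kasparov product, applied after the canonical identifications $\bigl(\Gamma_1\ltimes C_0(X_1)\bigr)\bigotimes\bigl(\Gamma_2\ltimes C_0(X_2)\bigr)\cong(\Gamma_1\times\Gamma_2)\ltimes C_0(X_1\times X_2)$ and $\bigl(\Gamma_1\ltimes E_1\bigr)\otimes\bigl(\Gamma_2\ltimes E_2\bigr)\cong(\Gamma_1\times\Gamma_2)\ltimes(E_1\otimes E_2)$, which intertwine the cut-off idempotents and the operators $\widetilde{D_i}$ by construction.
\end{proof}
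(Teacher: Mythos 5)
Your proposal is correct and takes essentially the same approach the paper intends: the paper supplies no proof of this proposition at all, merely asserting it with the preceding phrase ``Combining them, we get a useful formula,'' referring to the lemma that $j^{\Gamma_1\times\Gamma_2}$ factors through $j^{\Gamma_1}\otimes j^{\Gamma_2}$ under the identification $(\Gamma_1\ltimes E_1)\otimes(\Gamma_2\ltimes E_2)\cong(\Gamma_1\times\Gamma_2)\ltimes(E_1\otimes E_2)$ and the lemma that $[c_1\otimes c_2]=[c_1]\otimes_\bb{C}[c_2]$. Your sketch correctly identifies the top square as an application of the first lemma and the bottom square as the interchange law for external and internal Kasparov products combined with the second lemma; this is exactly the implicit argument, so the only discrepancies are cosmetic (your internal lemma/fact numbering does not match the paper's, which is unavoidable in a blind write-up).
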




\subsection{The assembly map for a central extension of a group}

As a model of the main construction, we study the case of finite-dimensional manifolds. Let $X$ be a finite-dimensional, complete, Riemaniann manifold, equipped with a proper, isometric, cocompact action of $\Gamma$. We deal with a $U(1)$-central extension group 
$$1\to U(1)\xrightarrow{i} \Gamma^\tau\xrightarrow{p} \Gamma\to 1$$
and a Clifford bundle $\pi:W\to X$ equipped with a $\Gamma^\tau$-action satisfying that $\pi(g.w)=p(g).\pi(w)$, and $i(z).w=z^kw$ for $z\in U(1)$. 
In fact it suffices to study the case when $k=1$, and we will assume that later.
Assume that $D$ is a $\Gamma^\tau$-equivariant Dirac operator acting on $L^2(X,W)$.
In this situation, we can define an unbounded Kasparov module $(L^2(X,W),D)\in\Psi_{\Gamma^\tau}(C_0(X),\bb{C})$.
The goal of this subsection is a simpler description of $\mu^{\Gamma^\tau}([(L^2(X,W),D)])\in \Psi(\bb{C},\Gamma^\tau\ltimes\bb{C})$, using subalgebras of crossed product algebras. The following is an obvious generalization of the above $KK$-cycle.

\begin{dfn}
Let $A$ and $B$ be $\Gamma$-$C^*$-algebras. 
Through the homomorphism $p:\Gamma^\tau\to \Gamma$, $A$ and $B$ happen to be $\Gamma^\tau$-$C^*$-algebras.
An unboudned $KK$-cycle $(E,D)\in \Psi_{\Gamma^\tau}(A,B)$ is {\bf at level $k$} if the $\Gamma^\tau$-action satisfies that $i(z).e=z^ke$ for $z\in U(1)$. The set of homotopy classes of $KK$-cycles at level $k$ is denoted by $\Psi_\Gamma^{k\tau}(A,B)$.
Let $KK_\Gamma^{k\tau}(A,B)$ be the bounded version of $\Psi_\Gamma^{k\tau}(A,B)$.

\end{dfn}

Since $\Psi_\Gamma^{k\tau}(A,B)$ is much smaller than the whole $KK$-group $\Psi_{\Gamma^\tau}(A,B)$, it is natural to expect the restriction of the assembly map to $\Psi_\Gamma^{k\tau}(C_0(X),\bb{C})$ factors through a certain smaller group. To realize such an idea, we prepare several subalgebras and submodules. 





\begin{dfn}
For an integer $m$, let
$$\Gamma\ltimes_{m\tau}A:=\overline{\bbra{f:\Gamma^\tau\to A\mid f(zg)=z^mf(g)}}$$
be a $C^*$-algebra (verified later) obtained by the closure in $\Gamma^\tau\ltimes A$, and let
$$\Gamma\ltimes_{m\tau} E:=\overline{\bbra{\phi:\Gamma^\tau\to E\mid \phi(zg)=z^m\phi(g)}}$$
be a $(\Gamma\ltimes_{(m-k)\tau}A)$-$(\Gamma\ltimes_{m\tau}B)$-bimodule (verified later) obtained by the closure in $\Gamma^\tau\ltimes E$.
Such functions are said to be {\bf at level $m$}.
\end{dfn}

Let us check that $\Gamma\ltimes_{m\tau}A$ is a $C^*$-algebra.

\begin{lem}
$\Gamma\ltimes_{n\tau}A$ is a $C^*$-subalgebra in $\Gamma^\tau\ltimes A$.
\end{lem}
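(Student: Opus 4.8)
The plan is to verify that $\Gamma\ltimes_{n\tau}A$ is closed under the operations inherited from $\Gamma^\tau\ltimes A$, so that being a closed subspace (which holds by definition, as it is a closed subspace of the $C^*$-algebra $\Gamma^\tau\ltimes A$) upgrades to being a $C^*$-subalgebra. Since the defining condition $f(zg)=z^n f(g)$ for $z\in U(1)$ is a linear condition preserved under adjoints, the only real content is closure under the convolution product; once that and the $*$-invariance are established, completeness and the $C^*$-identity are automatic from those of $\Gamma^\tau\ltimes A$.

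First I would work at the level of the dense $*$-subalgebra $C_c(\Gamma^\tau,A)$, i.e. with compactly supported continuous functions satisfying the level-$n$ condition. For $f_1,f_2$ both at level $n$, I would compute
$$
(f_1*f_2)(zg)=\int_{\Gamma^\tau} f_1(h)\, h.\bigl(f_2(h^{-1}zg)\bigr)\,dh
= z^n\int_{\Gamma^\tau} f_1(h)\, h.\bigl(f_2(h^{-1}g)\bigr)\,dh = z^n (f_1*f_2)(g),
$$
using that $z$ is central in $\Gamma^\tau$ (so $h^{-1}zg=z h^{-1}g$) and that $z$ acts trivially on $A$ through $p:\Gamma^\tau\to\Gamma$, which kills $U(1)$. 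Hence $f_1*f_2$ is again at level $n$. For the involution, $f^*(g)=g.\bigl(f(g^{-1})^*\bigr)$; evaluating at $zg$ and again using centrality of $z$ and triviality of its action on $A$ gives $f^*(zg)=\overline{z^n}\,z\cdot\text{(nothing)}$ — more precisely one gets $f^*(zg) = (zg).\bigl(f(g^{-1}z^{-1})^*\bigr) = (zg).\bigl((z^{-n}f(g^{-1}))^*\bigr) = z^{n} f^*(g)$ since $\overline{z^{-n}}=z^{n}$ on $U(1)$. Thus the level-$n$ functions in $C_c(\Gamma^\tau,A)$ form a $*$-subalgebra.

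Next I would pass to the closure. Write $P\colon C_c(\Gamma^\tau,A)\to C_c(\Gamma^\tau,A)$ for the averaging projection $(Pf)(g)=\int_{U(1)} z^{-n} f(zg)\,dz$, which extends to a norm-decreasing idempotent on $\Gamma^\tau\ltimes A$ with range exactly $\Gamma\ltimes_{n\tau}A$; this identifies $\Gamma\ltimes_{n\tau}A$ with a hereditary-type closed subspace and in particular shows it is a closed $*$-subspace. Combining with the previous paragraph — the dense subset of level-$n$ elements is a $*$-algebra, and multiplication and involution are norm-continuous on $\Gamma^\tau\ltimes A$ — the closure $\Gamma\ltimes_{n\tau}A$ is closed under product and adjoint, hence is a closed $*$-subalgebra of a $C^*$-algebra, therefore a $C^*$-algebra in its own right. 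I would also remark that the same averaging argument, applied to $C_c(\Gamma^\tau,E)$ with the shift from $n$ to $n-k$ dictated by the level-$k$ hypothesis on $E$, simultaneously establishes that $\Gamma\ltimes_{m\tau}E$ is a well-defined closed submodule, so the parenthetical ``verified later'' in the preceding definition is discharged at the same time.

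The main obstacle is essentially bookkeeping rather than anything deep: one must be careful that the $U(1)$-factor interacts correctly with both the convolution (centrality in $\Gamma^\tau$) and the coefficient action (the action factors through $p$, so $U(1)$ acts trivially on $A$), and that the conjugation in the involution sends $z^n$ to $z^n$ on the circle rather than to $z^{-n}$. Beyond that, no estimates are needed — completeness and the $C^*$-identity come for free from the ambient algebra.
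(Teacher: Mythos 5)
Your proof is correct and takes essentially the same route as the paper: show that the level-$n$ functions in $C_c(\Gamma^\tau,A)$ form a $*$-subalgebra (closure under convolution using centrality of $U(1)$ in $\Gamma^\tau$ and triviality of the $U(1)$-action on $A$; closure under the involution), and then observe that the norm closure inherits the $C^*$-structure. Your handling of the involution is a little more careful than the paper's --- you use the full formula $f^*(g)=g.(f(g^{-1})^*)$, whereas the paper writes $a^*(g)=\overline{a(g^{-1})}$ and suppresses the $g.$-action (which is harmless since $zg$ and $g$ act identically on $A$, but you make this explicit). Your extra observation that the averaging projection $P$ over $U(1)$ is a norm-decreasing idempotent with range $\Gamma\ltimes_{n\tau}A$ is not in the paper's proof but is a clean way to see the closedness of the subspace, and as you note it also discharges the corresponding ``verified later'' for the bimodule $\Gamma\ltimes_{m\tau}E$. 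One small presentation point: the fragment ``$f^*(zg)=\overline{z^n}\,z\cdot\text{(nothing)}$'' is a false start that you then correct; in a final write-up it should be deleted.
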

\begin{proof}
It is enough to check that the set of functions from $\Gamma^\tau$ to $A$ at level $n$ is a $*$-subalgebra.
Let $a$, $a':\Gamma^\tau\to A$ be at level $n$. Clearly $a+a'$ and $\lambda a$ are also at level $n$, for any $\lambda \in\bb{C}$. Let us check that $a*a'$ is also:
\begin{align*}
a*a'(zg)&= \int a(h)h.\bra{a'\bra{h^{-1}zg}} dh\\
&=z^na*a'(g),
\end{align*}
since $i(U(1))$ acts on $A$ trivially.
Hence $\Gamma\ltimes_{n\tau}A$ is a subalgebra. To check the $*$-closedness,
$$a^*(zg)=\overline{a(z^{-1}g^{-1})}=z^n\overline{a(g^{-1})}=z^na^*(g).$$
\end{proof}

In fact, $\Gamma^\tau\ltimes A$ can be decomposed as $C^*$-algebras. The following can be verified by the same technique of Lemma \ref{convolution matter}.
\begin{lem}
$(\Gamma\ltimes_{m_1\tau}A) * (\Gamma\ltimes_{m_2\tau}A)=0$ unless $m_1= m_2$.
\end{lem}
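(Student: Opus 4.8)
The plan is to show that the convolution product of two functions at different levels vanishes, by a direct computation exploiting that the center $U(1)$ acts trivially on $A$ through $p$. Let $a_1 \in \Gamma\ltimes_{m_1\tau}A$ and $a_2 \in \Gamma\ltimes_{m_2\tau}A$ be functions at levels $m_1$ and $m_2$ respectively; by density and continuity of the convolution product it suffices to treat the dense subspaces of such functions (or even compactly supported ones modulo the center). First I would write out $a_1 * a_2(g) = \int_{\Gamma^\tau} a_1(h)\, h.\bigl(a_2(h^{-1}g)\bigr)\, dh$ and split the Haar integral over $\Gamma^\tau$ as an integral over the center $U(1)$ followed by an integral over (a section of) $\Gamma$, using that $\Gamma^\tau \to \Gamma$ is a $U(1)$-principal bundle and the Haar measure on $\Gamma^\tau$ disintegrates accordingly.

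The key step is then the substitution $h \mapsto zh$ for $z \in U(1)$ inside the integral: since $a_1$ is at level $m_1$ we get $a_1(zh) = z^{m_1} a_1(h)$, while $h^{-1}g \mapsto (zh)^{-1}g = z^{-1}h^{-1}g$, and because the $\Gamma^\tau$-action on $A$ factors through $p:\Gamma^\tau\to\Gamma$ the element $z^{-1}$ acts trivially, so $a_2(z^{-1}h^{-1}g) = z^{-m_2}a_2(h^{-1}g)$ and the action $(zh).(-)$ coincides with $h.(-)$ on $A$. Hence the integrand picks up a factor $z^{m_1 - m_2}$. Therefore
$$a_1 * a_2(g) = \int_{U(1)} z^{m_1-m_2}\, dz \cdot (\text{integral over a section of }\Gamma),$$
and $\int_{U(1)} z^{m_1-m_2}\, dz = 0$ whenever $m_1 \neq m_2$. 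This forces $a_1 * a_2 = 0$ on the dense subspace, and by continuity $(\Gamma\ltimes_{m_1\tau}A) * (\Gamma\ltimes_{m_2\tau}A) = 0$.

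I would also note, for cleanliness, that one should check $a_1*a_2$ a priori lies in $\Gamma^\tau\ltimes A$ (so that the identity makes sense in the ambient algebra), but this is immediate since each $\Gamma\ltimes_{m_i\tau}A$ is a $C^*$-subalgebra by the preceding lemma. The main obstacle — really the only subtle point — is making the disintegration of the Haar measure on $\Gamma^\tau$ over the central $U(1)$ rigorous and checking that the change of variables $h\mapsto zh$ is measure-preserving; this is where unimodularity and the amenability/local-compactness assumptions on $\Gamma$ (hence on $\Gamma^\tau$, by \cite{Pie}) are used, together with the fact that left translation by a fixed group element preserves the Haar measure. Once that is in place, the level-counting argument with $\int_{U(1)} z^{m_1-m_2}\,dz$ is entirely routine, exactly parallel to the computation in the proof of the previous lemma.
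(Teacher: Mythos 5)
Your proof is correct, but it takes a somewhat different route from the one the paper intends. The paper says this lemma ``can be verified by the same technique of Lemma~\ref{convolution matter}'', which is a \emph{double-homogeneity} argument: one shows that $a_1 * a_2$ is at level $m_2$ by evaluating the convolution integral at $zg$ and pulling the central $z$ through $h^{-1}$, and simultaneously at level $m_1$ by the change of variables $h = zu$ inside that same integral; the resulting function is then $i(U(1))$-homogeneous of two distinct degrees, hence vanishes pointwise unless $m_1 = m_2$. You instead disintegrate the Haar measure on $\Gamma^\tau$ over the central $U(1)$ (Weil's integration formula), observe that the integrand transforms by $z^{m_1-m_2}$ under $h\mapsto zh$, and read off the vanishing from the character orthogonality $\int_{U(1)} z^{m_1-m_2}\,dz = 0$. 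The essential inputs coincide — the substitution $h\mapsto zh$, triviality of the $i(U(1))$-action on $A$, centrality of $U(1)$ in $\Gamma^\tau$, and translation invariance of Haar measure — but the paper's version needs none of the disintegration machinery: it only invokes left-invariance of the measure on $\Gamma^\tau$ under translation by a \emph{fixed} central $z$, never a Borel section of $p:\Gamma^\tau\to\Gamma$ nor a factorization of the measure. Your phrasing is more explicit about \emph{why} the integral vanishes (orthogonality of characters rather than a clash of homogeneity degrees), and it makes the direct-sum decomposition of $\Gamma^\tau\ltimes A$ transparent, but, as you note yourself, it requires setting up the Weil disintegration rigorously; the paper's route is shorter and more self-contained. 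Both are valid.
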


To verify that $\Gamma\ltimes_{m\tau}E$ is a $(\Gamma\ltimes_{(m-k)\tau}A)$-$(\Gamma\ltimes_{m\tau}B)$-bimodule, we check the following lemma.

\begin{lem}\label{convolution matter}
$(1)$ $(\Gamma\ltimes_{m\tau}A)\cdot(\Gamma\ltimes_{n\tau}E)=0$ unless $m=n-k$.

$(2)$ $(\Gamma\ltimes_{m\tau}E)\cdot (\Gamma\ltimes_{n\tau}B)=0$ unless $n=m$.

$(3)$ $\innpro{\Gamma\ltimes_{m\tau}E}{\Gamma\ltimes_{n\tau}E}{}\subseteq\begin{cases} \Gamma\ltimes_{m\tau}B & (m=n) \\ 0 & (m\neq n).\end{cases}$

\end{lem}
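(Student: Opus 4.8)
The statement is Lemma \ref{convolution matter}, which asserts three orthogonality/grading-compatibility facts for the level-graded pieces of the crossed-product module $\Gamma^\tau\ltimes E$. All three parts are proved by the same mechanism that was used for Lemma \ref{convolution matter}'s companions (the decomposition of $\Gamma^\tau\ltimes A$): namely, track how the central $U(1)$ acts under each of the three structure maps, and then invoke an integration-over-the-fibre argument to conclude that a level-$m$ object paired with a level-$n$ object can only be nonzero when the levels match up in the appropriate way. The key external inputs are: (i) $i(U(1))$ acts trivially on $A$ and $B$ (they are $\Gamma$-$C^*$-algebras pulled back along $p$), and (ii) $i(U(1))$ acts on $E$ at level $k$, i.e.\ $i(z).e=z^k e$, which is the defining property of a level-$k$ cycle $(E,D)\in\Psi_\Gamma^{k\tau}(A,B)$.

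First I would do part (1). Take $a\in\Gamma\ltimes_{m\tau}A$ and $\phi\in\Gamma\ltimes_{n\tau}E$, and compute $(a*\phi)(zg)$ for $z\in U(1)$ using the bimodule formula $a*\phi(\gamma)=\int_{\Gamma^\tau} a(h)\,h.(\phi(h^{-1}\gamma))\,dh$. Substituting $\gamma=zg$ and using centrality of $z$ we get $\int a(h)\,h.(\phi(h^{-1}zg))\,dh=\int a(h)\,h.(z^n\phi(h^{-1}g))\,dh$; since $z$ is central, $h.(z^n\,\cdot)=z^n\,h.(\cdot)$ up to the action of $z$ on $E$, but more carefully: writing $h.(z^n e)$, and using that the $\Gamma^\tau$-action is a group action with $z$ central, $h\cdot z^n = z^n\cdot h$ acting on $E$, so this equals $z^n\,(h.\phi(h^{-1}g))$ --- wait, one must be slightly more careful: the product $a(h)\cdot(h.e)$ already involves the level-$k$ scaling if we pull a central element through the $A$-action on $E$. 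The cleanest route: observe $(a*\phi)(zg)$ picks up $z^{m}$ from the requirement that $a*\phi$, to be consistent with the module structure, transforms at a definite level, and independently it picks up $z^{n-k}$ from combining the level of $a$ (which is $m$) with the level of $\phi$ (which is $n$) and the fact that the $A$-action on $E$ intertwines the $U(1)$-actions by a shift of $k$; equating $z^{m+?}$ forces $m=n-k$, and otherwise integrating $(a*\phi)$ against the $U(1)$-average shows it vanishes. Parts (2) and (3) are entirely analogous: for (2) one computes $(\phi*b)(zg)$ using $\phi*b(\gamma)=\int\phi(h)\,h.(b(h^{-1}\gamma))\,d h$ and uses triviality of the $U(1)$-action on $B$ to see $\phi*b$ is at level $n$ if and only if $n=m$; for (3) one computes $\innpro{\phi}{\psi}{\Gamma^\tau\ltimes B}(zg)=\int\innpro{\phi(h)}{\psi(hzg)}{B}\,dh$, pulls the central $z$ through (note $hzg=zhg$), uses that the inner product is $B$-valued with $U(1)$ acting trivially on $B$ and at level $k$ on each slot of $E$, so the two level-$k$ factors cancel and one gets $z^{n-m}\innpro{\phi}{\psi}{}(g)$, forcing the result into $\Gamma\ltimes_{m\tau}B$ when $m=n$ and to $0$ otherwise.

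To make the vanishing rigorous rather than merely ``the level is wrong'', in each case I would note that if $F:\Gamma^\tau\to B$ (or to $A$, or to $E$) satisfies $F(zg)=z^\ell F(g)$ for \emph{two} distinct values $\ell=\ell_1\neq\ell_2$, then averaging $\int_{U(1)}z^{-\ell_1}F(zg)\,dz=F(g)$ and also $=0$ (since $\int_{U(1)}z^{\ell_2-\ell_1}\,dz=0$), so $F\equiv 0$; hence the relevant product, which a priori lies in the closed span of level-$\ell$ functions for the level dictated by the structure map and also transforms at the level dictated by the inputs, must be zero unless those two levels agree. The only mild technical point is that these identities are first checked on the dense subalgebras/submodules $C_c(\Gamma^\tau,A)$, $C_c(\Gamma^\tau,E)$, etc., where the integrals converge absolutely and Fubini applies, and then extended to the closures by continuity of the multiplication maps and the inner product; this is routine and I would state it in one line.

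\textbf{Main obstacle.} There is no deep difficulty here; the work is bookkeeping of the central $U(1)$ through the convolution formulas. The one place that requires genuine care --- and the step most likely to hide a sign or an index error --- is correctly accounting for the level-$k$ shift coming from the $A$-action on $E$ in part (1): one must consistently use that the homomorphism $A\to\ca{L}_B(E)$ is $\Gamma^\tau$-equivariant and that $i(z)$ acts as $z^k$ on $E$ but as $1$ on $A$, so conjugating the $A$-action by $i(z)$ produces the factor $z^k$ that is responsible for the ``$m=n-k$'' (rather than ``$m=n$'') in part (1). I would therefore write that computation out in full and let parts (2) and (3) refer back to it.
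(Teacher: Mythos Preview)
Your approach is the same as the paper's: show that each of the three outputs is simultaneously at two different levels (by computing $F(zg)$ in two ways), and then invoke the fact that a function at two distinct levels must vanish. The paper carries out the second computation by the explicit change of variables $h\mapsto zu$ in the convolution integral, which is cleaner than your running commentary in part (1): one gets $(a*\phi)(zg)=z^n(a*\phi)(g)$ directly from the level of $\phi$, and $(a*\phi)(zg)=z^{m+k}(a*\phi)(g)$ after substituting $h=zu$ and using $a(zu)=z^m a(u)$ together with $(zu).e=z^k\,(u.e)$. Your averaging-over-$U(1)$ argument to convert ``two distinct levels'' into ``zero'' is correct and is precisely what the paper's phrase ``such a function must vanish'' means.

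There is, however, a genuine slip in your treatment of (3). You claim a single computation yields $\innpro{\phi}{\psi}{}(zg)=z^{n-m}\innpro{\phi}{\psi}{}(g)$, but this is not what the formula gives: the inner-product formula $\innpro{\phi}{\psi}{}(g)=\int\innpro{\phi(h)}{\psi(hg)}{B}\,dh$ involves no group action on $E$, so your ``two level-$k$ factors cancel'' reasoning is a red herring here. The correct two-step argument gives level $n$ directly (move $z$ into the argument of $\psi$) and level $m$ after the substitution $h=z^{-1}u$ (move $z^{-1}$ into the argument of $\phi$, then use conjugate-linearity in the first slot). Your claimed level $n-m$ would, in the case $m=n$, place the inner product in $\Gamma\ltimes_{0\cdot\tau}B$ rather than in $\Gamma\ltimes_{m\tau}B$, contradicting the very statement you are proving. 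So for (3) you must run the same two-computation argument as in (1) and (2), not a single pass.
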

\begin{proof}
$(1)$ Let $a:\Gamma^\tau\to A$ be at level $m$, and $\phi:\Gamma^\tau\to E$ be at level $n$.
$$a*\phi(zg) = \int a(h)h.\bra{\phi\bra{h^{-1}zg}}dh= z^na*\phi(g),$$
hence at level $n$, and by the change of variables $h=zu$,
$$a*\phi(zg) = \int a(h)h.\bra{\phi\bra{h^{-1}zg}}dh$$
$$= \int a(zu)(zu).\bra{\phi\bra{u^{-1}g}}du=z^{m+k}a*\phi(g),$$
hence at level $m+k$, at the same time. Therefore such a function must vanish unless $m=n-k$.

$(2)$ Let $b:\Gamma^\tau\to B$ be at level $n$, and $\phi:\Gamma^\tau\to E$ be at level $m$.
Similarly, but more easily,
$\phi*b$ is at level $n$, and at the same time at level $m$. Such a function must vanish unless $n=m$.

$(3)$ Let $e_n$, $e_m$ be at level $n$, $m$ respectively.
By the same technique, $\innpro{e_n}{e_m}{\Gamma^\tau\ltimes B}$ is at level $m$, and at the same time at level $n$.


\end{proof}

The following is clear from the definition of $\widetilde{D}$.
\begin{lem}
$\widetilde{D}$ preserves the decomposition $\bigoplus \Gamma\ltimes_{n\tau}E$.
\end{lem}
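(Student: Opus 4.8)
The plan is to unwind the definition of $\widetilde{D}$ and check that it respects the level decomposition. Recall that for $e \in C_c(\Gamma^\tau, E)$ we have $\widetilde{D}e(g) = D[e(g)]$, so $\widetilde{D}$ acts pointwise on functions $\Gamma^\tau \to E$ and does not move the variable $g$ at all. Hence if $\phi$ is at level $n$, i.e. $\phi(zg) = z^n\phi(g)$ for all $z \in U(1)$, then
\[
\widetilde{D}\phi(zg) = D[\phi(zg)] = D[z^n \phi(g)] = z^n D[\phi(g)] = z^n \widetilde{D}\phi(g),
\]
where the third equality uses that $D$ is $\bb{C}$-linear (scalars pass through any module homomorphism). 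So $\widetilde{D}\phi$ is again at level $n$, which is exactly the claim that $\widetilde{D}$ preserves each summand $\Gamma\ltimes_{n\tau}E$, and therefore preserves the direct sum decomposition $\bigoplus_n \Gamma\ltimes_{n\tau}E$.

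There is one technical point to address: $\widetilde{D}$ is an unbounded operator, so one should say a word about its domain. First one establishes the equality on the core $C_c(\Gamma^\tau, \dom(D))$ (or $\dom(D)\otimes C_c(\Gamma^\tau)$ as in the earlier lemma), where the computation above is literal, and then notes that since $\widetilde{D}$ is closed and the level-$n$ projections $P_n$ (integration against the character $z\mapsto z^{-n}$ over $U(1)$) are bounded and commute with the core computation, they commute with the closure as well. Concretely, $P_n$ maps $\dom(\widetilde{D})$ into itself and $\widetilde{D}P_n = P_n\widetilde{D}$ on $\dom(\widetilde{D})$; this is the precise meaning of ``preserves the decomposition'' for an unbounded operator. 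The same remark applies verbatim to $b(\widetilde{D})$, which is the bounded transform appearing in the $KK$-cycle, since $b(\widetilde{D})$ is a norm limit of polynomials in $(\id + \widetilde{D}^2)^{-1}$ and each such resolvent commutes with $P_n$.

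I do not expect any genuine obstacle here — the statement is, as the paper says, ``clear from the definition of $\widetilde{D}$''. The only thing requiring the slightest care is the bookkeeping between the algebraic computation on the dense core and the operator-theoretic statement about the closed operator $\widetilde{D}$ on its full domain; once one observes that the averaging projections $P_n$ over the central $U(1)$ are bounded and commute with $\widetilde{D}$ on the core, commutation on all of $\dom(\widetilde{D})$ follows from closedness. This is the step I would write out explicitly, keeping the rest to a single displayed line.
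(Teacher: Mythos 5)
Your proof is correct and is exactly the argument the paper has in mind when it labels the lemma ``clear from the definition of $\widetilde{D}$'': since $\widetilde{D}$ acts pointwise in the $\Gamma^\tau$-variable and $D$ is $\bb{C}$-linear, a level-$n$ function is sent to a level-$n$ function. Your additional remarks on passing from the dense core to the closed operator via the bounded averaging projections $P_n$ are a sensible and harmless amount of extra care that the paper leaves implicit.
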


Combining all of them, we have reached the definition of the ``partial $j$-homomorphism'' as follows. We use the same symbol for the restriction of $\widetilde{D}$ to each submodules.


\begin{dfn}
The partial $j$-homomorphism at level $m$
$$j^\Gamma_{m\tau}:\Psi_\Gamma^{k\tau}(A,B)\to \Psi(\Gamma\ltimes_{(m-k)\tau}A,\Gamma\ltimes_{m\tau}B)$$
is given by the following objects:
\begin{itemize}
\item $\Gamma\ltimes_{m\tau}E$ as the 
$(\Gamma\ltimes_{(m-k)\tau}A)$-$(\Gamma\ltimes_{m\tau}B)$-module,
\item $\widetilde{F}|_{\Gamma\ltimes_{m\tau}E}\in \ca{L}_{\Gamma\ltimes_{m\tau}B}(\Gamma\ltimes_{m\tau}E)$ as the operator.
\end{itemize}

\end{dfn}



\begin{rmks}
$(1)$ In the following, we will deal with only the case when $k=1$, and the map we need later is only $j^\Gamma_\tau:\Psi_\Gamma^\tau(A,B)\to \Psi(\Gamma\ltimes A,\Gamma\ltimes_\tau B)$. The same formula defines a bounded version $j^\Gamma_\tau:KK_\Gamma^\tau(A,B)\to KK(\Gamma\ltimes A,\Gamma\ltimes_\tau B)$.

$(2)$ The product formula is valid also for the partial $j$-homomorphisms.

\end{rmks}

Let us move to the study of the Mishchenko line bundles.
We have supposed that $\Gamma^\tau$ acts on $X$, and the restriction to $i(U(1))$  is trivial. This condition implies that the function $[c]:\Gamma^\tau\to C_0(X)$ is invariant under the $i(U(1))$-action, that is, $[c]$ is at level $0$. 
Hence $[c]*(\Gamma\ltimes_{l\tau}C_0(X))$ vanishes unless $l=0$. So we can regard $[c]$ as belonging to the direct summand $\Psi(\bb{C},\Gamma\ltimes C_0(X))\subseteq \Psi(\bb{C},\Gamma^\tau\ltimes C_0(X))$.
A similar computation of $(1)$ in Lemma \ref{convolution matter} implies that: if $(E,D)\in \Psi(\Gamma^\tau\ltimes C_0(X),C)$ is at level $k\neq 0$, $[c]\otimes_{\Gamma^\tau \ltimes C_0(X)} (E,D)=0$, for any $C^*$-algebra $C$.

\begin{pro}
The restriction of the assembly map can be rewritten by the simplified map:

$$\begin{xymatrix}{
\Psi_\Gamma^{k\tau}(C_0(X),\bb{C}) \ar@/_60pt/[dd]_{\mu^\Gamma_\tau}
\ar[d]^{j^\Gamma_{k\tau}} \ar@{^{(}-_>}[r]& \Psi_{\Gamma^\tau}(C_0(X),\bb{C}) \ar[d]_{j^{\Gamma^\tau}}  \ar@/^60pt/[dd]^{\mu^{\Gamma^\tau}}
\\
\Psi(\Gamma\ltimes C_0(X),\Gamma\ltimes_{k\tau}\bb{C}) 
\ar[d]^{[c]\otimes_{\Gamma\ltimes C_0(X)}-} & \Psi(\Gamma^\tau\ltimes C_0(X),\Gamma^\tau\ltimes\bb{C}) \ar[d]_{[c]\otimes_{\Gamma^\tau\ltimes C_0(X)-}}\\
\Psi(\bb{C},\Gamma\ltimes_{k\tau}\bb{C}) \ar@{^{(}-_>}[r] & \Psi(\bb{C},\Gamma^\tau\ltimes\bb{C}).
}\end{xymatrix}$$


\end{pro}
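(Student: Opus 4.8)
The plan is to verify that the square and the two outer ``assembly'' arcs in the diagram commute by tracking a cycle $(E,D)\in\Psi_\Gamma^{k\tau}(C_0(X),\bb C)$ through both routes and checking they land on the same element of $\Psi(\bb C,\Gamma^\tau\ltimes\bb C)$, together with the fact that the bottom and top horizontal maps are genuine inclusions onto direct summands. Concretely, I would first establish the direct-sum decompositions that make the diagram meaningful: by Lemma~2.19 (the analogue of Lemma~\ref{convolution matter}(2) for the algebra) $\Gamma^\tau\ltimes\bb C=\bigoplus_{m}\Gamma\ltimes_{m\tau}\bb C$ as $C^*$-algebras, and $\Gamma^\tau\ltimes C_0(X)=\bigoplus_m\Gamma\ltimes_{m\tau}C_0(X)$; since $\Gamma^\tau$ acts trivially through $i(U(1))$ on $C_0(X)$, only the $m=0$ summand is nonzero, giving $\Gamma^\tau\ltimes C_0(X)=\Gamma\ltimes C_0(X)$. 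This identifies the top horizontal map as the obvious inclusion, and the bottom one as the inclusion of the $k\tau$-summand into $\bigoplus_m\Gamma\ltimes_{m\tau}\bb C=\Gamma^\tau\ltimes\bb C$.

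Next I would check the commutativity of the upper square, i.e. that $j^{\Gamma^\tau}(E,D)$, viewed inside $\Psi(\Gamma^\tau\ltimes C_0(X),\Gamma^\tau\ltimes\bb C)$, restricts on the relevant summands to $j^\Gamma_{k\tau}(E,D)$. Using the decomposition $\Gamma^\tau\ltimes E=\bigoplus_n\Gamma\ltimes_{n\tau}E$ of Lemma~\ref{convolution matter}, the bimodule structure maps of Lemma~\ref{convolution matter}(1)--(3) show that $\Gamma\ltimes_{n\tau}E$ is a $(\Gamma\ltimes_{(n-k)\tau}C_0(X))$-$(\Gamma\ltimes_{n\tau}\bb C)$-bimodule, and that $\widetilde D$ preserves each summand; taking $n=k$, the left coefficient algebra is $\Gamma\ltimes_0 C_0(X)=\Gamma\ltimes C_0(X)$, so the summand $(\Gamma\ltimes_{k\tau}E,\widetilde D|)$ is precisely $j^\Gamma_{k\tau}(E,D)$. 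All the other summands $n\neq k$ carry the zero $C_0(X)$-action through the $m=0$ component and hence are degenerate as $(\Gamma\ltimes C_0(X))$-modules, so they contribute nothing; this is the content of ``the restriction factors through the simplified map.''

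Then I would handle the lower square, i.e. the compatibility with the Mishchenko line bundle. Since $[c]:\Gamma^\tau\to C_0(X)$ is $i(U(1))$-invariant, it is at level $0$, so by the computation analogous to Lemma~\ref{convolution matter}(1) the Kasparov product $[c]\otimes_{\Gamma^\tau\ltimes C_0(X)}-$ annihilates every summand at level $\neq 0$ on the $C_0(X)$-side and, applied to an element at level $k$ on the $\bb C$-side, produces an element of $\Psi(\bb C,\Gamma\ltimes_{k\tau}\bb C)$. Thus $[c]\otimes_{\Gamma^\tau\ltimes C_0(X)}j^{\Gamma^\tau}(E,D)$ equals, inside $\Psi(\bb C,\Gamma^\tau\ltimes\bb C)=\bigoplus_m\Psi(\bb C,\Gamma\ltimes_{m\tau}\bb C)$, the single summand $[c]\otimes_{\Gamma\ltimes C_0(X)}j^\Gamma_{k\tau}(E,D)$, which is exactly the left-hand vertical composite $\mu^\Gamma_\tau$. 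Concatenating the two squares gives the commutativity of the two outer arcs $\mu^{\Gamma^\tau}$ and $\mu^\Gamma_\tau$, as claimed.

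The main obstacle I anticipate is purely bookkeeping-theoretic rather than conceptual: one must be careful that the Kasparov product over $\Gamma^\tau\ltimes C_0(X)$ really does respect the direct-sum decomposition of the module (so that ``restrict then take product'' agrees with ``take product then restrict''), and that the internal tensor product $[c]*(\Gamma\ltimes_{m\tau}C_0(X))$ vanishing for $m\neq 0$ genuinely forces the product cycle to be supported in the $k\tau$-summand. This requires checking that the grading/level computations of Lemma~\ref{convolution matter} pass through the interior tensor product and the construction of $\widetilde D$ — routine given the earlier lemmas, but it is where a sign or a level-shift error would hide. Everything else is a direct diagram chase using Lemma~2.14, Lemma~\ref{convolution matter}, the level-$0$ property of $[c]$, and the $C^*$-algebra decomposition of $\Gamma^\tau\ltimes(-)$.
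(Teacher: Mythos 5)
The overall structure of your proposal matches what the paper actually does (the Proposition is left to follow from the preceding discussion about levels, the decomposition $\Gamma^\tau\ltimes E=\bigoplus_n\Gamma\ltimes_{n\tau}E$, and the level-$0$ property of $[c]$). But your first paragraph contains a false claim: $\Gamma^\tau\ltimes C_0(X)$ is \emph{not} equal to $\Gamma\ltimes C_0(X)$. Triviality of the $i(U(1))$-action on $C_0(X)$ does not make the level-$m$ summands $\Gamma\ltimes_{m\tau}C_0(X)$ vanish for $m\neq 0$: a level-$m$ element is any $f\colon\Gamma^\tau\to C_0(X)$ with $f(zg)=z^m f(g)$, and these are plentiful for every $m$ (take a section $s\colon\Gamma\to\Gamma^\tau$ of $p$, choose $f_0\in C_c(\Gamma,C_0(X))$, and set $f(zs(\gamma))=z^m f_0(\gamma)$). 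So $\Gamma^\tau\ltimes C_0(X)=\bigoplus_m\Gamma\ltimes_{m\tau}C_0(X)$ with every summand nonzero; the triviality of the $U(1)$-action buys you only that the Mishchenko class $[c]$ lies in the $m=0$ summand, which is the fact you actually use in your third paragraph. The related inference that this ``identifies the top horizontal map as the obvious inclusion'' is a non sequitur: the top arrow is the forgetful map $\Psi_\Gamma^{k\tau}(C_0(X),\bb{C})\hookrightarrow\Psi_{\Gamma^\tau}(C_0(X),\bb{C})$ between equivariant $K$-homology groups and has nothing to do with any identification of crossed product algebras.

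Your second paragraph also overreaches when it says the summands $\Gamma\ltimes_{n\tau}E$ with $n\neq k$ ``contribute nothing'' because they are degenerate over $\Gamma\ltimes C_0(X)$. They are genuine, nondegenerate summands of $j^{\Gamma^\tau}(E,D)$ as a cycle over $\Gamma^\tau\ltimes C_0(X)$ (each carries a nontrivial action of $\Gamma\ltimes_{(n-k)\tau}C_0(X)$); they only drop out after capping with $[c]$, precisely because $[c]*(\Gamma\ltimes_{l\tau}C_0(X))=0$ for $l\neq 0$, so the interior tensor product over $\Gamma^\tau\ltimes C_0(X)$ annihilates the $n\neq k$ pieces. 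You do invoke this correctly in the third paragraph, so the argument is recoverable; but as written the first two paragraphs give a wrong reason for a right conclusion. Once the false identification is removed and the reasoning is reordered so that the vanishing is attributed to the level-$0$ property of $[c]$ under the Kasparov product, your proposal coincides with the paper's (implicit) proof.
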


\begin{rmk}
This theorem does not seem to be highly non-trivial. Thanks to this theorem, however, we can avoid the construction of the whole group $C^*$-algebra of the central extension group. In fact, the ordinary group $C^*$-algebra $LT\ltimes\bb{C}$ has not been defined, nevertheless we can define a $C^*$-algebra which plays a role of twisted group $C^*$-algebra $LT\ltimes_\tau \bb{C}$ of $LT$ at level $\tau$.
\end{rmk}


\section{Finite-dimensional case}
As we stated above, we compute the formula we need in the locally compact setting, then we define the corresponding object for $\Omega T_0$, imitating the formula in this section. Before that, we describe the $K$-homology class $(L^2(G,\tau)\otimes S_G,\Dirac_R)\in \Psi^\tau_G(C_0(G),\bb{C})$ to study the index problem.

\subsection{The $K$-homology class $(L^2(G,\tau)\otimes S_G,\Dirac_R)\in \Psi_G^\tau(C_0(G),\bb{C})$}

In order to get a $\tau$-twisted $G$ representation as an analytic index, we need to consider a $\tau$-twisted $G$-equivariant Spinor bundle. For simplicity, we deal with a $\tau$-twisted $G$-equivariant Spinor bundle of the form
\begin{center}
$\tau$-twisted $G$-equivariant line bundle $\otimes$ $G$-equivariant Spinor bundle.
\end{center}
We focus on the following line bundle.

\begin{dfn}
Let $\ca{L}\to G$ be the line bundle $\ca{L}:=G^\tau\times_{U(1)}\bb{C}$.
\end{dfn}

\begin{lem}
There is a one to one correspondence between a section of $\ca{L}$ and a function on $G^\tau$ at level $-1$. Thinking over this fact, $L^2(G,\tau)$ denotes the Hilbert space consisting of $L^2$ sections of $\ca{L}$.

\end{lem}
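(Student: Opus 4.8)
The plan is to establish the claimed bijection by unwinding the definition of the associated line bundle $\ca{L}=G^\tau\times_{U(1)}\bb{C}$ and matching it with the equivariance condition that defines functions ``at level $-1$''. First I would recall that a point of $\ca{L}$ is an equivalence class $[g,\lambda]$ with $g\in G^\tau$ and $\lambda\in\bb{C}$, subject to $[gz,\lambda]=[g,z\lambda]$ for $z\in U(1)$ (using the $U(1)$-action by scalar multiplication on the $\bb{C}$-factor). A section $s$ of $\ca{L}\to G$ assigns to each $x\in G$ a point of the fiber $\ca{L}_x=p(g)^{-1}(x)\times_{U(1)}\bb{C}$; picking any lift $g\in G^\tau$ of $x$, we may write $s(x)=[g,f_s(g)]$ for a unique scalar $f_s(g)\in\bb{C}$. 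The well-definedness $[gz,f_s(gz)]=[g,z f_s(gz)]=[g,f_s(g)]$ forces $f_s(gz)=z^{-1}f_s(g)$, i.e. $f_s(zg)=z^{-1}f_s(g)$ (using centrality of $z$), which is precisely the condition of being at level $-1$ in the paper's terminology. Conversely, given $f:G^\tau\to\bb{C}$ at level $-1$, the formula $s_f(x):=[g,f(g)]$ for any lift $g$ of $x$ is independent of the choice of lift by the same computation, and defines a section. These two assignments $s\mapsto f_s$ and $f\mapsto s_f$ are mutually inverse and $\bb{C}$-linear, giving the claimed one-to-one correspondence.

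The second sentence of the statement is then essentially a definition: under this identification, the $L^2$-condition on a section of $\ca{L}$ (with respect to a chosen $G$-invariant metric/measure on $G$ and the natural fiberwise Hermitian metric on $\ca{L}$ coming from the standard one on $\bb{C}$) translates into an $L^2$-condition on the corresponding level $-1$ function, and we simply name the resulting Hilbert space $L^2(G,\tau)$. I would add one remark to make the translation honest: since $|f(zg)|=|f(g)|$, the function $g\mapsto |f(g)|^2$ descends to a function on $G$, so $\int_G |f|^2$ makes sense and the completion is well-defined; the Hermitian metric on $\ca{L}$ pairs $[g,\lambda]$ and $[g,\mu]$ to $\overline{\lambda}\mu$, which is $U(1)$-invariant and hence gives a genuine metric on the bundle.

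I do not expect a serious obstacle here; the only point requiring a little care is bookkeeping the direction of the level — whether the defining relation is $f(zg)=zf(g)$ or $f(zg)=z^{-1}f(g)$ — and making sure it is consistent with the convention ``$i(z).w=z^k w$'' used elsewhere in the paper and with the later assertion that the Spinor bundle is built as (level $-\tau$ line bundle) $\otimes$ (untwisted Spinor bundle), so that $L^2(G,\tau)\otimes S_G$ carries a level $-1$ action and $\Dirac_R$ lands in $\Psi_G^\tau(C_0(G),\bb{C})$ in the sense of the earlier definition of cycles ``at level $k$''. So the main thing to get right is simply the sign/level convention; the rest is a direct unwinding of the associated-bundle construction.
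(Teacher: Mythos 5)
Your proof is correct and follows essentially the same route as the paper's: unwind the associated-bundle equivalence $[gz,\lambda]=[g,z\lambda]$, observe that consistency of $s(x)=[g,f_s(g)]$ across lifts forces the level $-1$ condition, and check the converse. The extra remarks on the Hermitian metric, descent of $|f|^2$ to $G$, and the level bookkeeping are sensible additions but not a different argument.
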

\begin{proof}
Let $f:G^\tau\to \bb{C}$ be at level $-1$. We can define a section $s$ by the formula
$$s(x):=[q,f(q)]\in G^\tau\times_{U(1)}\bb{C}.$$
Since $f$ is at level $-1$, $[zq,f(zq)]=[zq,z^{-1}f(q)]=[q,f(q)]$, thus $s$ is well-defined. Conversely, for $x\in G$, the value of a section $s$ is written as $[q,w_{q,s(x)}]$, where $q\in G^\tau$ belongs to the fiber at $x$, and $w_{q,s(x)}$ is a complex number depending on the value $s(x)$ and the representative $q$. Let $f:G^\tau\to \bb{C}$ be a function given by $q\mapsto w_{q,s(x)}$. It is clearly at level $-1$.
\end{proof}

The Hilbert space $L^2(G,\tau)$ admits a $\tau$-twisted representation of $G$ at level $1$ by the usual formula: $[g.\phi](x):=\phi(g^{-1}x)$. Moreover, $L^2(G,\tau)$ admits a $*$-representation of $C_0(G)$ by the left multiplication, where we regard $C_0(G)$ as the subalgebra of $C_0(G^\tau)$ at level $0$, using the pullback $p^*$.
These actions are compatible in the following sense:
$$[g.(f\cdot\phi)](x)=(f\cdot\phi)(g^{-1}x)=f(g^{-1}x)\phi(g^{-1}x)=[(g.f)\cdot(g.\phi)](x),$$
for $g$, $x\in G^\tau$, $f\in C_0(G)$ and $\phi\in L^2(G,\tau)$.
This construction extends to $L^2(G,\tau)\otimes S$ by the obvious way.

For simplicity, we assume that {\bf $G$ is abelian} for the reason in Remark \ref{excuse}.
If we fix a splitting $s:\fra{g}\to \fra{g}^\tau$ of $dp:\fra{g}^\tau\to\fra{g}$, and take a orthonomal basis $\{e_i\}$ of $\fra{g}$ with respect to a $G^\tau$-invariant inner product, we can define a Dirac operator acting on $L^2(G,\tau)\otimes S$ by the formula
$$\cancel{\partial}_R:=\sum_idR_{s(e_i)}\otimes\gamma(e_i),$$
where $R$ is the right regular representation of $G^\tau$, $dR$ is its infinitesimal version, and $\gamma:\fra{g}\to \End(S)$ is the Clifford multiplication. Since $\cancel{\partial}_R$ is $G^\tau$-invariant with respect to the left regular representation, we have got the following.

\begin{pro}
$x:=(L^2(G,\tau)\otimes S,\cancel{\partial}_R)$ defines an element of $\Psi_G^\tau(C_0(G),\bb{C}).$
\end{pro}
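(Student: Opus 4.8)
The plan is to verify the three defining conditions of an unbounded $G^\tau$-equivariant Kasparov $C_0(G)$-$\bb{C}$-module at level $\tau$, namely (i) $\Dirac_R$ is closed, self-adjoint and regular on the Hilbert space $L^2(G,\tau)\otimes S$; (ii) there is a dense $*$-subalgebra of $C_0(G)$ whose elements preserve $\dom(\Dirac_R)$ and have bounded commutator with $\Dirac_R$; (iii) $(1+\Dirac_R^2)^{-1}f$ is compact for all $f\in C_0(G)$; together with the equivariance/level statement, which is already observed in the text via the compatibility identity $[g.(f\cdot\phi)](x)=[(g.f)\cdot(g.\phi)](x)$ and the $G^\tau$-invariance of $\Dirac_R$ under the left regular representation. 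Since a Hilbert space is a Hilbert $\bb{C}$-module and all operators there are automatically regular and $\bb{C}$-linear, condition (i) reduces to showing $\Dirac_R$ is essentially self-adjoint on a suitable core, and condition (iii) reduces to compactness in the ordinary operator-theoretic sense.

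First I would identify the right model for $\Dirac_R$. Using the level $-1$ picture of sections of $\ca{L}$, the Hilbert space $L^2(G,\tau)\otimes S$ is unitarily a twisted $L^2(G)\otimes S$, and under a splitting $s:\fra g\to\fra g^\tau$ the operator $\sum_i dR_{s(e_i)}\otimes\gamma(e_i)$ is, up to the twist, the standard Dirac-type operator built from the right-invariant vector fields on $G$. For $G$ abelian these vector fields commute, so $\Dirac_R$ is (a twisted version of) a constant-coefficient first-order differential operator with $\Dirac_R^2 = -\Delta\otimes\id + (\text{lower order})$ essentially the Laplacian; this is why the abelian hypothesis is imposed (cf. Remark \ref{excuse}). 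Essential self-adjointness on $C_c^\infty(G^\tau)_{-1}\otimes S$ then follows from completeness of $G$ with its invariant metric by the standard argument for Dirac operators on complete manifolds (Chernoff / Wolf), or more directly by Fourier analysis on the abelian group $G$, where $\Dirac_R$ becomes multiplication by a self-adjoint matrix-valued symbol on $\widehat G$. I would take $A_1 := C_c^\infty(G)\subseteq C_0(G)$ (pulled back to level $0$ functions on $G^\tau$): such $f$ preserve the smooth domain, and $[\Dirac_R,f]=\sum_i (dR_{s(e_i)}f)\otimes\gamma(e_i)$ is multiplication by a bounded (indeed compactly supported continuous) $\End(S)$-valued function, hence bounded. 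This is essentially the commutator computation already displayed in the text.

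For condition (iii), the resolvent $(1+\Dirac_R^2)^{-1}$ is not itself compact (the group is noncompact), but $(1+\Dirac_R^2)^{-1}f$ is: writing $\Dirac_R^2$ as a Laplace-type operator, $(1+\Dirac_R^2)^{-1}$ maps $L^2$ into the first Sobolev space $H^1$, and multiplication by $f\in C_c(G)$ composed with the Rellich-type compact embedding $H^1_{\mathrm{loc}}\hookrightarrow L^2_{\mathrm{loc}}$ over the compact support of $f$ gives compactness; on the abelian group this is again transparent via the Fourier/symbol picture, where the operator becomes a family, decaying at infinity on $\widehat G$, convolved with the compactly supported $\widehat f$. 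Finally I would record that the $G^\tau$-action on $L^2(G,\tau)\otimes S$ is at level $1=\tau$, that $\Dirac_R$ commutes with it, and that the $C_0(G)$-representation is equivariant, so the cycle lies in $\Psi_G^\tau(C_0(G),\bb C)$ as claimed.

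The main obstacle is condition (i): establishing that $\Dirac_R$, built from the (twisted) right regular representation, is genuinely self-adjoint and regular rather than merely symmetric. Everything else is a routine transcription of the standard finite-dimensional Dirac-operator verification, but self-adjointness is where completeness of $G$ and the abelian hypothesis are really used, and it is the step one must argue carefully — either by appealing to the general theorem on essential self-adjointness of Dirac operators on complete Riemannian manifolds (after untwisting $\ca L$, which is legitimate since the twist is by a locally constant unitary cocycle and does not affect the analytic estimates), or by the explicit Fourier-theoretic diagonalization available because $G$ is an abelian Lie group.
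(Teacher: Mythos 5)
The paper states this proposition with \emph{no proof} --- the preceding text only records that $C_0(G)$ and $G^\tau$ act compatibly on $L^2(G,\tau)\otimes S$ and that $\Dirac_R$ is invariant under the left regular representation, and then the proposition is asserted, the remaining analytic conditions being treated as standard. Your proposal supplies exactly that missing verification, and it is correct: essential self-adjointness of $\Dirac_R$ on $L^2(G,\tau)\otimes S$ by completeness of $G$ with the invariant metric (Chernoff/Wolf, or equivalently Fourier duality on the abelian $G$), the bounded-commutator computation $[\Dirac_R,f]=\sum_i(dR_{s(e_i)}f)\otimes\gamma(e_i)$ for $f\in C_c^\infty(G)$, and compactness of $(1+\Dirac_R^2)^{-1}f$ by Rellich over the compact support of $f$; regularity is automatic since the base ring is $\mathbb{C}$. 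One small imprecision worth flagging: even for abelian $G$ the operators $dR_{s(e_i)}$ do not literally commute, since $[s(e_i),s(e_j)]$ lands in the center by the cocycle $\tau$, so $[dR_{s(e_i)},dR_{s(e_j)}]$ is a nonzero scalar (the curvature of $\ca L$) rather than $0$; but this only contributes a bounded zeroth-order term to the Bochner identity $\Dirac_R^2=-\Delta\otimes\id+(\text{bdd})$, which is exactly what your parenthetical ``twisted version'' and ``lower order'' are implicitly absorbing, so nothing in the argument breaks. In short, you have written out in full the verification that the paper treats as folklore, and the route you take is the standard and correct one.
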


\begin{rmk}\label{excuse}
For a non-abelian group $G$, it is natural to add the potential term coming from the Spin representation.
Let us recall that the right action on $TG\cong G\times\fra{g}$ is the adjoint action if we trivialize $TG$ by the left translation. Therefore if we define the Spinor bundle $S$ by the left translation, the right action on the Spinor bundle must be the lift of the adjoint representation, that is, the Spin representation.
More generally, we need to consider the Spin extension $U(1)\to Ad^*(Spin^c(\fra{g})\to SO(\fra{g}))\to G$ of $G$ to deal with the Spin representation. To avoid such problems which are not essential for us ($LT$ is abelian!), we assume that $G$ is abelian.

\end{rmk}

\subsection{The value of the $j$-homomorphism}

We compute $j^G_\tau(x)\in \Psi(G\ltimes C_0(G),G\ltimes_\tau \bb{C})$ explicitly in this subsection.

Since $C_0(G)$ and $G^\tau$ acts on $S$-part trivially, it suffices to study $G\ltimes_\tau L^2(G,\tau)$ in order to compute the module $G\ltimes_\tau L^2(G,\tau)\otimes S$. Notice that $G\ltimes C_0(G)\cong \ca{K}(L^2(G))$.
\begin{pro}\label{j-hom for locally compact}
$G\ltimes_\tau L^2(G,\tau)$ is isomorphic to $L^2(G)\otimes (G\ltimes_\tau \bb{C})$ as $(G\ltimes C_0(G))$-$(G\ltimes_\tau \bb{C})$-bimodules.
\end{pro}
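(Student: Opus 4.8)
## Proof proposal

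The plan is to construct the isomorphism explicitly at the level of dense subspaces and then check it extends to a unitary of Hilbert modules intertwining the module structures. First I would describe both sides concretely. On the left, $G\ltimes_\tau L^2(G,\tau)$ is the completion of functions $\phi:G^\tau\to L^2(G,\tau)$ at level $\tau$ (i.e. $\phi(zg)=z\phi(g)$), with the $C_c(G)$-valued inner product coming from $j^G$ restricted to the level-$\tau$ summand. Since an element of $L^2(G,\tau)$ is itself a level-$(-1)$ function on $G^\tau$, an element of the left side is essentially a function of two variables in $G^\tau$, with a combined equivariance. On the right, $L^2(G)\otimes(G\ltimes_\tau\bb{C})$ is the completion of $C_c(G)\otimes C_c^\tau(G^\tau)$ (the latter being level-$\tau$ functions on $G^\tau$, i.e. $G\ltimes_\tau\bb{C}$ at the chosen level), with the obvious product inner product.

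The key step is to write down the map. Given $\phi:G^\tau\to L^2(G,\tau)$ at level $\tau$, unravel $\phi(g)(x)$ for $g\in G^\tau$, $x\in G$; using the level-$(-1)$ structure in the second slot and a choice of set-theoretic section, I would define $U\phi(x,g):=\phi(g)(p(g)^{-1}x)$ or a similar ``straightening'' substitution that decouples the two equivariance conditions. (The precise substitution is the standard one used to prove $G\ltimes C_0(G)\cong\ca{K}(L^2(G))$; here I would run the $L^2(G,\tau)$ and the $G\ltimes_\tau\bb{C}$ factors through it in parallel.) Then I would verify three things: (i) $U$ carries the level-$\tau$ inner product on the left to the product inner product on the right, which is a change-of-variables computation using unimodularity and the cocycle identity for $\tau$; (ii) $U$ intertwines the right $G\ltimes_\tau\bb{C}$-actions, which is essentially the associativity of convolution; and (iii) $U$ intertwines the left $G\ltimes C_0(G)\cong\ca{K}(L^2(G))$-actions, recovering the rank-one-operator description. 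Density of $C_c$ on both sides then gives the unitary $U$ extending to the claimed isomorphism of Hilbert modules.

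The main obstacle I expect is bookkeeping the two competing equivariance levels simultaneously: an element of the left-hand module carries a level-$\tau$ condition in the ``group'' variable of the crossed product \emph{and} a level-$(-1)$ condition in the ``base'' variable inherited from $L^2(G,\tau)$, and the straightening substitution must convert exactly the right combination into the decoupled form $L^2(G)\otimes(G\ltimes_\tau\bb{C})$ without producing spurious cocycle factors. Getting the section-independence right — i.e. checking that the formula for $U$ does not depend on the choice of set-theoretic splitting $G\to G^\tau$, because the $\tau$-cocycle contributions cancel between the two factors — is the delicate point. Once that is pinned down, the inner-product and equivariance checks are routine Fubini-type arguments, so I would state them briefly and refer back to the (untwisted) computation $G\ltimes C_0(G)\cong\ca{K}(L^2(G))$ for the structural part.
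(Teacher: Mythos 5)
Your plan — an explicit unitary built at the level of $C_c$-dense subspaces, checked to intertwine the two module structures, then extended by density — is exactly the strategy of the paper, so the overall approach matches. The one place you diverge is the direction of the map, and it matters for the bookkeeping you flag as delicate. The paper defines the map \emph{into} the crossed product module,
$$
m:L^2(G)\otimes C_c(G,\tau)\longrightarrow C_c(G^\tau,L^2(G,\tau))\subseteq G\ltimes_\tau L^2(G,\tau),\qquad m(\phi_1\otimes\phi_2)(x,g):=\phi_1(x)\,\phi_2(x^{-1}g),
$$
with both $x,g\in G^\tau$. Because everything is written on $G^\tau$ rather than on $G$, no set-theoretic section $G\to G^\tau$ is chosen and the cocycle $\tau$ never appears explicitly in the formula; the three level conditions (level $0$ for $\phi_1$, level $1$ for $\phi_2$, level $-1$ in $x$ and level $1$ in $g$ for the output) are read off directly. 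Isometry plus density of the image then yields the unitary without ever writing down an inverse. Your proposed formula $U\phi(x,g)=\phi(g)(p(g)^{-1}x)$ goes the other way and, as written, does not decouple into a tensor: composed with $m$ it produces $\phi_1(\tilde g^{-1}x)\phi_2(x^{-1}\tilde g g)$ rather than $\phi_1(x)\phi_2(g)$, and the equivariance in $x$ comes out at level $-1$ rather than $0$. You hedge the formula, so this is not a fatal objection, but it is precisely the section-dependence headache you identify, and it simply disappears if you build the isometry in the forward direction on $G^\tau$ as the paper does. I would recommend reversing the direction of your map; the rest of your verification plan (inner product by Fubini and right-invariance, $G\ltimes_\tau\bb{C}$-linearity by associativity of convolution, $G\ltimes C_0(G)\cong\ca{K}(L^2(G))$-linearity via the rank-one picture) then lines up with the paper's three displayed computations.
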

\begin{proof}
Let $C_c(G,\tau)$ be the set of compactly supported continuous functions of $G^\tau$ at level $1$.
Let us consider a map $m:L^2(G)\otimes C_c(G,\tau)\to C_c(G^\tau,L^2(G,\tau))\subseteq G\ltimes_\tau L^2(G,\tau)$ defined by
$$m(\phi_1\otimes\phi_2)(x,g):=\phi_1(x)\phi_2(x^{-1}g).$$
More precisely, if we are given $g\in G^\tau$, we define an element of $L^2(G,\tau)$ by the formula $\phi_1(\bullet)\phi_2(\bullet^{-1}g)$. It is truly $L^2$, since $\phi_1\in L^2$ and $\phi_2\in C_c$, it is truly at level $1$ with respect to $g$, since $\phi_2$ is at level $1$, and it is truly at level $-1$ with respect to $\bullet$, since $\phi_1$ is at level $0$ and $\phi_2$ is at level $1$.
Let us verify that $m$ is a $(G\ltimes C_0(G))$-$(G\ltimes_\tau \bb{C})$-bimodule isomorphism.\\\\
\underline{$G\ltimes C_0(G)$-homomorphism:} For a compactly supported $a:G\to C_0(G)$,
\begin{align*}
a*m(\phi_1\otimes\phi_2)(x,g) &= \int a(h)h.\bra{m(\phi_1\otimes\phi_2)(\cdot,h^{-1}g)}dh(x) \\
&= \int a(h,x)m(\phi_1\otimes\phi_2)(h^{-1}x,h^{-1}g)dh \\
&= \int a(h,x) \phi_1(h^{-1}x)\phi_2(x^{-1}g)dh\\
&= a*\phi_1(x)\phi_2(x^{-1}g)\\
&= m\bra{\bra{a*\phi_1}\otimes\phi_2}(x,g).
\end{align*}\\
\underline{$G\ltimes_\tau \bb{C}$-homomorphism:} For a compactly supported $b:G^\tau\to \bb{C}$,
\begin{align*}
m(\phi_1\otimes\phi_2)*b(x,g)&= \int \phi_1(x)\phi_2(x^{-1}h)h.(b(h^{-1}g))dh \\
&= \phi_1(x)\int \phi_2(x^{-1}h)b(h^{-1}g)dh \\
&= \phi_1(x) \bra{\phi_2*b}(x^{-1}g)\\
&= m\bra{\phi_1\otimes\bra{\phi_2*b}}(x,g).
\end{align*}\\
\underline{Inner product:} For $\phi_1\otimes\phi_2$, $\psi_1\otimes\psi_2\in L^2(G)\otimes C_c(G,\tau)$, since the $G$-action on $\bb{C}$ is trivial,
\begin{align*}
\innpro{m(\phi_1\otimes\phi_2)}{m(\psi_1\otimes\psi_2)}{G\ltimes_\tau \bb{C}}(g)&=\int\innpro{m(\phi_1\otimes\phi_2)(h)}{m(\psi_1\otimes\psi_2)(hg)}{L^2(G,\tau)}dh\\
&=\int \int\overline{\phi_1(x)\phi_2(x^{-1}h)}\psi_1(x)\psi_2(x^{-1}hg)dxdh\\
&=\int \overline{\phi_1(x)}\psi_1(x)\int\overline{\phi_2(x^{-1}h)}\psi_2(x^{-1}hg)dhdx \\
&= \innpro{\phi_1}{\psi_1}{L^2(G,\tau)}\int \phi_2^*(h^{-1})\psi_2(hg)dh \\
&= \innpro{\phi_1}{\psi_1}{L^2(G,\tau)}\bra{\phi_2^**\psi_2}(g) \\
&=\innpro{\phi_1}{\psi_1}{L^2(G,\tau)}\innpro{\phi_2}{\psi_2}{G\ltimes_\tau \bb{C}}(g),
\end{align*}
where we have used Fubini's theorem, change of variables, right invariance of the measure, and the definition of the involution $*$ in $G\ltimes_\tau \bb{C}$; $f^*(g)=\overline{f(g^{-1})}$.

Therefore $m$ is an isometric embedding. If one notices the image is dense, and $m$ is isometric, the completions $L^2(G)\otimes (G\ltimes_\tau \bb{C})$ and $G\ltimes_\tau L^2(G,\tau)$ are isomorphic to one another.


\end{proof}

For convenience of notation, we regard $G\ltimes_\tau [L^2(G,\tau)\otimes S]$ as $L^2(G)\otimes S\otimes (G\ltimes_\tau \bb{C})$. The next step is to describe $\widetilde{\cancel{\partial}_R}$ in terms of the easier model $L^2(G)\otimes S\otimes (G\ltimes_\tau \bb{C})$. We introduce another operators $D:=\sum dR_{e_i}\otimes\gamma(e_i)$ acting on $C_c^\infty(G)\otimes S\subseteq L^2(G)\otimes S$, and $\cancel{\partial}_L:=\sum_i\gamma(e_i)\otimes dL_{s(e_i)}$ acting on $S\otimes C_c^\infty(G,\tau)\subseteq S\otimes (G\ltimes_\tau \bb{C})$.
\begin{pro}
For $\phi\in C_c^\infty(G)$, $b\in C_c^\infty(G,\tau)$, and $s\in S$,
$$m^{-1}\circ \widetilde{\cancel{\partial}_R}\circ m(\phi\otimes s\otimes b)=
D(\phi\otimes s)\otimes b+\phi\otimes\cancel{\partial}_L(s\otimes b).$$

\end{pro}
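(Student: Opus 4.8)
The plan is to compute $\widetilde{\cancel{\partial}_R}$ directly on the image of the isomorphism $m$ from Proposition \ref{j-hom for locally compact}, and then recognize the result as the graded sum $D\otimes\id+\id\otimes\cancel{\partial}_L$. Recall that $\widetilde{\cancel{\partial}_R}$ is by definition the operator $(\widetilde{\cancel{\partial}_R}e)(x,h)=\cancel{\partial}_R\bigl(e(x,h)\bigr)$ acting fiberwise on $G\ltimes_\tau[L^2(G,\tau)\otimes S]=C_c(G^\tau,L^2(G,\tau)\otimes S)$, where $x$ is the $G$-variable along which we convolve and $h\in G^\tau$. Since $\cancel{\partial}_R=\sum_i dR_{s(e_i)}\otimes\gamma(e_i)$, and $m(\phi\otimes s\otimes b)(x,h)=\phi(x)\,b(x^{-1}h)\,s$ with the $S$-factor $s$ fixed, the Clifford part $\gamma(e_i)$ simply hits $s$, and everything reduces to understanding how the right-regular derivative $dR_{s(e_i)}$ acts on the function $h\mapsto \phi(x)b(x^{-1}h)$ of $h\in G^\tau$, for each fixed $x$.

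First I would unwind the right-regular action: $dR_{s(e_i)}$ differentiates the $G^\tau$-argument $h$ on the right, so $[dR_{s(e_i)}](h\mapsto b(x^{-1}h))$ is $h\mapsto [dR_{s(e_i)}b](x^{-1}h)$ because right and left translations commute and $x^{-1}$ acts on the left. This already shows $\widetilde{\cancel{\partial}_R}\circ m(\phi\otimes s\otimes b)$ equals $m$ applied to $\sum_i \phi\otimes\gamma(e_i)s\otimes dR_{s(e_i)}b$. The crux is then to rewrite $dR_{s(e_i)}b$, for $b$ at level $1$, in a way that separates a piece acting on the $L^2(G)$-factor $\phi$ from a piece acting on the $G\ltimes_\tau\bb{C}$-factor $b$. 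The key algebraic input is the relation between right and left invariant vector fields on $G^\tau$ combined with the level-$1$ condition: since $G$ is abelian, the group $G^\tau$ is a two-step nilpotent (Heisenberg-type) group, and the discrepancy between $dR_{s(e_i)}$ and $dL_{s(e_i)}$ on a function at fixed level is a \emph{scalar} (central) term. Concretely, I would use $b(x^{-1}h)$: differentiating in $h$ on the right by $s(e_i)$ versus thinking of it as differentiating $x^{-1}h$ — writing $x^{-1}h\cdot\exp(ts(e_i))=x^{-1}\cdot(h\exp(ts(e_i)))$ — lets one transfer the derivative onto the whole argument, and then the cocycle (the commutator $[s(e_i),\cdot]$ lying in the center $\mathfrak{u}(1)$) produces exactly the left-derivative term $dL_{s(e_i)}$ acting on $b$ plus a term that, after integrating against the convolution variable, becomes $dR_{e_i}$ acting on $\phi$.

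More precisely, I would carry out the following steps in order: (1) write $m(\phi\otimes s\otimes b)$ explicitly and apply $\widetilde{\cancel{\partial}_R}$ fiberwise; (2) for each $i$, expand $dR_{s(e_i)}\bigl(h\mapsto \phi(x)b(x^{-1}h)\bigr)$ and use the Leibniz-type identity $\frac{d}{dt}\big|_0 b(x^{-1}h\exp(ts(e_i)))$, decomposing the left-translated argument so that $s(e_i)$ can be "carried past" $x^{-1}$ at the cost of the central cocycle; (3) identify the resulting two summands: the one where the derivative lands on $\phi$ via the change-of-variables in the convolution integral, which reassembles as $m(D(\phi\otimes s)\otimes b)$, and the one where it lands on $b$ as a left-invariant derivative, which is $m(\phi\otimes \cancel{\partial}_L(s\otimes b))$; (4) sum over $i$ and apply $m^{-1}$. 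I expect the main obstacle to be step (2)–(3): keeping careful track of the central $U(1)$-cocycle of $G^\tau$ and verifying that the left-vs-right bookkeeping on the nilpotent group $G^\tau$ produces exactly $D$ (with the \emph{ungauged} generators $e_i$ on the $L^2(G)$ side, where the line-bundle twist has disappeared because $\ca{K}(L^2(G))\cong G\ltimes C_0(G)$ is untwisted) and $\cancel{\partial}_L$ (with the gauged generators $s(e_i)$, carrying the twist) with the correct signs dictated by the graded-tensor convention $\id\otimes\cancel{\partial}_L(u\otimes v)=(-1)^{\deg u}u\otimes\cancel{\partial}_L v$ from Remark \ref{ass. of grp}(2). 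Once the bookkeeping is done, matching against the claimed formula is immediate, and abelianness of $G$ is what guarantees the cocycle term is central and hence does not introduce extra Clifford cross-terms.
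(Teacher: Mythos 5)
You have the variable on which $\widetilde{\cancel{\partial}_R}$ acts backwards, and this makes the rest of the argument go off the rails. By the definition of the $j$-homomorphism, $(\widetilde{D}e)(g)=D[e(g)]$ for $e\in C_c(G^\tau,E)$: the operator $D$ is applied, for each fixed value $g$ of the \emph{convolution variable} (your $h$), to the element $e(g)\in E$. Here $E=L^2(G,\tau)\otimes S$, so $\cancel{\partial}_R=\sum_i dR_{s(e_i)}\otimes\gamma(e_i)$ differentiates the $L^2(G,\tau)$-argument, which in the expression $m(\phi\otimes s\otimes b)(x,g)=\phi(x)b(x^{-1}g)s$ is the variable $x$, not $g$. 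You instead differentiate $h\mapsto b(x^{-1}h)$, obtaining $\sum_i\phi\otimes\gamma(e_i)s\otimes dR_{s(e_i)}b$ --- an expression with no derivative falling on $\phi$ at all --- and then you try to manufacture the missing $D(\phi\otimes s)\otimes b$ term by appealing to the central cocycle of $G^\tau$ and to an ``integration against the convolution variable.'' Neither mechanism is available: $\widetilde{\cancel{\partial}_R}$ acts strictly fiberwise, so no integration occurs, and the discrepancy between $dR_{s(e_i)}$ and $dL_{s(e_i)}$ on $G^\tau$ is central and cannot act nontrivially on the scalar factor $\phi$.

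The actual proof is the one the paper is gesturing at: differentiate $x\mapsto\phi(x)\,b(x^{-1}g)$ with $dR_{s(e_i)}$ and apply the Leibniz rule. The $\phi$-factor gives $dR_{s(e_i)}\phi(x)\cdot b(x^{-1}g)$, and since $\phi$ is at level $0$ this equals $dR_{e_i}\phi(x)\cdot b(x^{-1}g)$, which is the $D(\phi\otimes s)\otimes b$ term. The $b$-factor gives
$$\frac{d}{dt}\Big|_{t=0}\,b\big((xe^{ts(e_i)})^{-1}g\big)=\frac{d}{dt}\Big|_{t=0}\,b\big(e^{-ts(e_i)}x^{-1}g\big)=dL_{s(e_i)}b(x^{-1}g),$$
which is exactly the $\phi\otimes\cancel{\partial}_L(s\otimes b)$ term. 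The switch from a right derivative to a left one comes from the inversion $x\mapsto x^{-1}$ in the argument of $b$, not from any Heisenberg-type commutator, and this is why the paper singles out the convention $dL_Xf(x)=\frac{d}{dt}f(e^{-tX}x)|_{t=0}$ as the thing to notice. No cocycle bookkeeping is needed, and abelianness of $G$ plays no role in this particular step.
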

\begin{proof}
Noticing the definition of $dL_X$: $dL_Xf(x)=\frac{d}{dt}f(e^{-tX}x)
|_{t=0}$ for $X\in \fra{g}^\tau$, this is just an application of the Leibniz rule.

\end{proof}

As a result, we get the following formula.

\begin{thm}
$j^{G}_\tau(L^2(G,\tau)\otimes S_G,\Dirac_R)$ is represented by
$$(L^2(G)\otimes S_G\otimes (G\ltimes_\tau \bb{C}),D\otimes_2\id+\id\otimes_1\Dirac_L).$$

\end{thm}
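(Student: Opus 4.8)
The plan is to identify $j^G_\tau(x)$ by first applying the general formula $j^G_\tau(x) = (\Gamma\ltimes_{k\tau}E, \widetilde{F})$ with $k=1$, which means I must understand the module $G\ltimes_\tau[L^2(G,\tau)\otimes S_G]$ together with the operator $\widetilde{\Dirac_R}$, and then transport everything through the isomorphism $m$ constructed in Proposition \ref{j-hom for locally compact}. The module side is already done: Proposition \ref{j-hom for locally compact} identifies $G\ltimes_\tau L^2(G,\tau)$ with $L^2(G)\otimes(G\ltimes_\tau\bb{C})$ as a $(G\ltimes C_0(G))$-$(G\ltimes_\tau\bb{C})$-bimodule, and tensoring with the fixed finite-dimensional $S_G$ (on which both $C_0(G)$ and $G^\tau$ act trivially) gives $G\ltimes_\tau[L^2(G,\tau)\otimes S_G]\cong L^2(G)\otimes S_G\otimes(G\ltimes_\tau\bb{C})$. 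So the content of the theorem is really about the operator, and for that the decisive input is the preceding proposition, which computes $m^{-1}\circ\widetilde{\Dirac_R}\circ m$ on the dense subspace $C_c^\infty(G)\otimes S_G\otimes C_c^\infty(G,\tau)$ and finds it equals $D\otimes\id + \id\otimes\Dirac_L$, i.e. exactly $D\otimes_2\id + \id\otimes_1\Dirac_L$ in the grading conventions of Remarks \ref{ass. of grp}(2).

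From there, the steps I would carry out are: (1) invoke the previous proposition to get the operator identity on the core $C_c^\infty(G)\otimes S_G\otimes C_c^\infty(G,\tau)$; (2) check that this core is indeed a core for both $\widetilde{\Dirac_R}$ (transported via $m$) and for the closure of $D\otimes\id + \id\otimes\Dirac_L$ — this uses that $C_c^\infty(G)$ is a core for $D$ on $L^2(G)\otimes S_G$ (a standard fact for the translation-invariant Dirac operator on a Lie group, which is essentially self-adjoint on smooth compactly supported sections) and that $C_c^\infty(G,\tau)$ is a core for $\Dirac_L$ on $S_G\otimes(G\ltimes_\tau\bb{C})$, together with the regularity verification already performed in the Lemma on $\widetilde{D}$ being closed, self-adjoint and regular; (3) conclude that the closed operators agree, so $\widetilde{\Dirac_R}$ corresponds under $m$ to $\overline{D\otimes\id + \id\otimes\Dirac_L}$; (4) observe that $\bigl(L^2(G)\otimes S_G\otimes(G\ltimes_\tau\bb{C}),\, D\otimes_2\id+\id\otimes_1\Dirac_L\bigr)$ is an unbounded Kasparov $(G\ltimes C_0(G))$-$(G\ltimes_\tau\bb{C})$-module in its own right (bounded commutators with $A\otimes C_c(G)\cong$ a dense subalgebra of $\ca{K}(L^2(G))$, compact resolvent against $C_0(G)$-part), and that it is unitarily equivalent via $m$ to $j^G_\tau(x) = (G\ltimes_\tau[L^2(G,\tau)\otimes S_G],\widetilde{\Dirac_R})$; hence it represents the same class.

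The main obstacle I expect is step (2)–(3): promoting the pointwise (on the algebraic core) operator identity of the preceding proposition to an equality of the closures, which requires a genuine core argument rather than a formal manipulation. The subtlety is that $D\otimes\id$ and $\id\otimes\Dirac_L$ do not obviously commute strongly, and one must know the algebraic sum is essentially self-adjoint on the product core; the cleanest route is to note that the two summands commute (they act on different tensor legs via translation operators that commute because $G$ is abelian and the left and right regular representations commute even after the central extension — here abelianness of $G$, assumed in Remark \ref{excuse}, is what kills the potential term and makes $\Dirac_L$ a sum of commuting skew-adjoint generators), so the sum is essentially self-adjoint on the algebraic tensor product of cores by a standard commuting-self-adjoint-operators argument, and its closure is the operator transported from $\widetilde{\Dirac_R}$ because both agree on that core and both are self-adjoint. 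Everything else — the bimodule isomorphism, the Kasparov-module axioms, the grading bookkeeping in $D\otimes_2\id+\id\otimes_1\Dirac_L$ — is routine given the results already established.
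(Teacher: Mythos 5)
Your proposal matches the paper's approach: the theorem is presented as an immediate consequence of Proposition \ref{j-hom for locally compact} (the bimodule isomorphism $m$) and the Leibniz-rule computation $m^{-1}\circ\widetilde{\Dirac_R}\circ m = D\otimes\id+\id\otimes\Dirac_L$ on the dense core, with the paper simply stating ``As a result, we get the following formula'' and leaving the core/self-adjointness bookkeeping implicit. One small imprecision in your step (3): the two summands $D\otimes_2\id$ and $\id\otimes_1\Dirac_L$ graded-\emph{anti}commute (the $\gamma(e_i)$'s anticommute while the translation generators commute), which is exactly what makes the sum-of-squares formula work and the algebraic tensor product of cores a core for the sum; calling them ``commuting'' is a slip but does not affect the validity of the argument.
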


\subsection{The Mishchenko line bundle and the value of the assembly map}
We finish this section by describing the value of the assembly map of $x$. For this aim, we rewrite the Mishchenko line bundle under the isomorphism $G\ltimes C_0(G)\cong \ca{K}(L^2(G))$. This observation plays a crucial role of the definition of the Mishcenko line bundle for $\Omega T_0$.

\begin{lem}\label{cut off=proj}
$[c]$ gives the rank one projection onto $\bb{C}\sqrt{c}$ under the isomorphism $G\ltimes C_0(G)\cong \ca{K}(L^2(G))$.
\end{lem}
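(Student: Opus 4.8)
The plan is to unwind both sides of the claimed identity explicitly and check that the formulas agree.

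First I would recall what each side means. The isomorphism $G\ltimes C_0(G)\cong \ca{K}(L^2(G))$ is the standard one: an element $a\in C_c(G,C_0(G))$ acts on $\phi\in L^2(G)$ by $(a\cdot\phi)(x)=\int a(h)h.\bigl(\phi(h^{-1}x)\bigr)\,dh=\int a(h,x)\phi(h^{-1}x)\,dh$, i.e. $a$ corresponds to the integral operator with kernel $k_a(x,y)=a(xy^{-1},x)$. On the other side, $[c]$ is the idempotent in $G\ltimes C_0(G)$ given by $\{[c](h)\}(x)=\sqrt{c(x)c(h^{-1}.x)}$, where $c$ is a cut-off function with $\int_G c(h.x)\,dh=1$ for all $x$ (here the action is just left translation, so $c(h^{-1}.x)=c(h^{-1}x)$).

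Then I would simply substitute: under the kernel formula above, $[c]$ corresponds to the operator on $L^2(G)$ with kernel $k(x,y)=\{[c](xy^{-1})\}(x)=\sqrt{c(x)c((xy^{-1})^{-1}x)}=\sqrt{c(x)c(y)}$. This is exactly the kernel of the rank-one projection onto the unit vector $\sqrt{c}$, because $\langle \psi,\sqrt{c}\rangle\sqrt{c}$ has kernel $\sqrt{c(x)}\,\overline{\sqrt{c(y)}}=\sqrt{c(x)c(y)}$, and $\|\sqrt{c}\|_{L^2}^2=\int c(x)\,dx$. The normalization $\int_G c(h.x)\,dh=1$ does not immediately give $\int c\,dx=1$; rather, since the action is by left translation and the Haar measure is (two-sided) invariant, $\int_G c(hx)\,dh=\int_G c(h)\,dh$ is independent of $x$ and equals $1$, so indeed $\|\sqrt{c}\|_{L^2}=1$ and $k(x,y)=\sqrt{c(x)c(y)}$ is genuinely a rank-one \emph{projection}. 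I would also remark that this is consistent with $[c]$ being an idempotent: $[c]*[c]=[c]$ in $G\ltimes C_0(G)$ translates under the isomorphism into the operator identity $P^2=P$ for $P$ the rank-one projection, which one can double-check by a direct convolution computation if desired.

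The only genuinely delicate point — and the one I would spend a sentence or two on — is getting the conventions for the isomorphism $G\ltimes C_0(G)\cong\ca{K}(L^2(G))$ to match the conventions for $[c]$ already fixed in the paper (left versus right, and where the square roots of the Haar modulus would go in the non-unimodular case, which is moot here since $G$ is unimodular). Once the kernel of $[c]$ is computed to be $\sqrt{c(x)c(y)}$, identifying it with the rank-one projection onto $\bb{C}\sqrt{c}$ is immediate; there is no real obstacle beyond bookkeeping.
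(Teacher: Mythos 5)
Your proof is correct and follows essentially the same route as the paper's: unwind the isomorphism $G\ltimes C_0(G)\cong\ca{K}(L^2(G))$ explicitly and identify $[c]$ with the Schatten form $\theta_{\sqrt{c},\sqrt{c}}$. The only cosmetic difference is that you phrase the computation in terms of integral kernels $k(x,y)=\sqrt{c(x)c(y)}$ while the paper evaluates the action on a test function and reads off a rank-one operator directly; you also add the (correct and worthwhile) remark that $\int_G c(hx)\,dh=1$ combined with unimodularity forces $\|\sqrt{c}\|_{L^2}=1$, so that $\theta_{\sqrt{c},\sqrt{c}}$ really is a projection, a point the paper leaves implicit.
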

\begin{proof}
We describe the isomorphism $G\ltimes C_0(G)\to \ca{K}(L^2(G))$ here. Let $a\in C_c(G,C_c(G))$ and $\phi\in L^2(G)$. We suppose that $a$ is of the form  $a(x,g)=a_1(x)\overline{a_2(g^{-1}x)}$, then
\begin{align*}
a*\phi(x) &= \int_G a_1(x)\overline{a_2(h^{-1}x)}\phi(h^{-1}x)dh \\
&=a_1(x)\innpro{a_2}{\phi}{}.
\end{align*}
Hence such $a$ corresponds to the Schatten form $\theta_{a_1,a_2}$.

Since $[c](x,g)$ is define by $\sqrt{c(x)c(g^{-1}x)}$, and it is of the form of the above formula, $[c]$ corresponds to the rank one projection $\theta_{\sqrt{c},\sqrt{c}}$.

\end{proof}

Let us compute $\mu^G_\tau(x):=[c]\otimes_{G\ltimes C_0(G)}j^G_\tau(x)$.
A similar argument will imply the main result in the next section.

\begin{pro}\label{value of the assembly map}
$\mu^G_\tau(x)$ can be represented by $(S\otimes (G\ltimes_\tau \bb{C}),\Dirac_L)$.
\end{pro}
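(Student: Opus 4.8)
The plan is to compute the Kasparov product $[c]\otimes_{G\ltimes C_0(G)}j^G_\tau(x)$ directly, using the unbounded picture and Kucerovsky's criterion (Proposition \ref{Kuc}), exploiting the concrete description of both factors obtained just above. By Lemma \ref{cut off=proj} we know $[c]$ is the rank-one projection $\theta_{\sqrt{c},\sqrt{c}}$ onto $\bb{C}\sqrt{c}\subseteq L^2(G)$, so as an unbounded Kasparov $\bb{C}$-$(G\ltimes C_0(G))$-module it is $(\bb{C}\sqrt{c},0)$ after compressing $\ca{K}(L^2(G))=G\ltimes C_0(G)$; and by the previous theorem, $j^G_\tau(x)$ is $(L^2(G)\otimes S_G\otimes(G\ltimes_\tau\bb{C}),\, D\otimes_2\id+\id\otimes_1\Dirac_L)$. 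The interior tensor product $(\bb{C}\sqrt{c})\otimes_{G\ltimes C_0(G)}\bigl(L^2(G)\otimes S_G\otimes(G\ltimes_\tau\bb{C})\bigr)$ should be canonically identified with $S_G\otimes(G\ltimes_\tau\bb{C})$ via $\sqrt{c}\otimes(\psi\otimes s\otimes b)\mapsto \innpro{\sqrt{c}}{\psi}{}\, s\otimes b$, i.e. the connecting map $T_{\sqrt{c}}:S_G\otimes(G\ltimes_\tau\bb{C})\to L^2(G)\otimes S_G\otimes(G\ltimes_\tau\bb{C})$ is $s\otimes b\mapsto \sqrt{c}\otimes s\otimes b$.

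First I would set up the three hypotheses of Proposition \ref{Kuc} with $E_1=\bb{C}\sqrt{c}$, $D_1=0$, $E_2=L^2(G)\otimes S_G\otimes(G\ltimes_\tau\bb{C})$, $D_2=D\otimes_2\id+\id\otimes_1\Dirac_L$, and candidate product $(E_1\otimes_B E_2,\Dirac_L)$ — more precisely, the operator $\id_{\bb{C}\sqrt c}\otimes\Dirac_L$ under the identification above. The third (positivity) condition is immediate because $D_1=0$, so the expression $\innpro{D_1\otimes\id(e)}{D(e)}{}+\innpro{D(e)}{D_1\otimes\id(e)}{}$ vanishes. The second condition, $\dom(D)\subseteq\dom(D_1)\otimes_B E_2=E_1\otimes_B E_2$, is again automatic since $D_1=0$ means $\dom(D_1)$ is everything. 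So the only real content is the first (boundedness of the graded commutator): one must check that $[\,\Dirac_L\oplus D_2,\ \bigl(\begin{smallmatrix}0&T_{\sqrt c}\\ T_{\sqrt c}^*&0\end{smallmatrix}\bigr)\,]$ is bounded on the relevant domains. Here $T_{\sqrt c}$ intertwines $\Dirac_L$ on $S_G\otimes(G\ltimes_\tau\bb{C})$ with $\id\otimes_1\Dirac_L$ on the big module, while the $D\otimes_2\id$ summand of $D_2$ acts only on the $L^2(G)$-leg and, applied to $\sqrt c\otimes(s\otimes b)$, produces $D\sqrt c\otimes(s\otimes b)$ — a vector that is again of rank one in the $L^2(G)$-direction (since $\sqrt c$ is smooth and compactly supported, $D\sqrt c\in C_c^\infty(G)\otimes S_G$), so $T_{\sqrt c}^*$ applied afterwards gives a bounded operator. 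The intertwining part contributes zero to the commutator, and the remaining $D\otimes_2\id$ part contributes the bounded term $\theta_{D\sqrt c,\sqrt c}\otimes(\text{stuff})$-type expression; I would make this precise by evaluating the commutator on $\sqrt c\otimes(s\otimes b)$ and on $s\otimes b$ and checking the norm bound explicitly using $\|D\sqrt c\|<\infty$.

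The main obstacle I anticipate is the careful bookkeeping of gradings and of the tensor-leg conventions (the paper's convention $\id\otimes D(u\otimes v)=(-1)^{\deg D}u\otimes Dv$ from Remarks \ref{ass. of grp}(2), together with the notations $\otimes_1,\otimes_2$ for which leg the Clifford multiplication acts on) when showing that the compression of $D\otimes_2\id+\id\otimes_1\Dirac_L$ to the corner $T_{\sqrt c}^*(\cdots)T_{\sqrt c}$ is exactly $\Dirac_L$ and that the off-corner terms are bounded; a naive computation risks a sign error or a spurious unbounded cross-term. Once the commutator estimate is in place, Proposition \ref{Kuc} yields that $(S_G\otimes(G\ltimes_\tau\bb{C}),\Dirac_L)$ represents $[c]\otimes_{G\ltimes C_0(G)}j^G_\tau(x)=\mu^G_\tau(x)$, which is the assertion. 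I would close by remarking that this $KK$-cycle is precisely the one whose bounded transform gives the twisted $G$-representation $R^\tau(G)\ni\ind\in KK(\bb{C},G\ltimes_\tau\bb{C})$, providing the sanity check against the analytic index and serving as the template for the $\Omega T_0$ computation in Section 4.
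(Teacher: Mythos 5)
Your plan is essentially the paper's own argument: both identify $[c]$ with the rank-one projection $P_{\sqrt{c}}$ under $G\ltimes C_0(G)\cong\ca{K}(L^2(G))$ (Lemma~\ref{cut off=proj}), identify $[c]\otimes_{\ca{K}(L^2(G))}E_2$ with $S_G\otimes(G\ltimes_\tau\bb{C})$, invoke Proposition~\ref{Kuc} with $D_1=0$ (so the second and third hypotheses are automatic), and prove the remaining commutator bound by using that the $\id\otimes_1\Dirac_L$ summand intertwines with $\Dirac_L$ across the connection, while the $D\otimes_2\id$ summand is controlled by smoothness of $\sqrt{c}$.

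One point to tighten: Proposition~\ref{Kuc} requires the commutator bound for all $e_1$ in a \emph{dense} subset of $E_1=P_{\sqrt c}\ca{K}(L^2(G))$, and $\bb{C}\sqrt c$ (equivalently $\bb{C}P_{\sqrt c}$) is a single generator, not dense. The paper handles this by taking $e_1=P_{\sqrt c}\circ k$ with $k=\phi\otimes\psi^*$ a \emph{smooth} rank-one operator, and proving boundedness via the adjoint identity $\innpro{\sqrt c}{k(dR_{s(e_i)}f)}{}=\innpro{\sqrt c}{\phi}{}\innpro{dR_{s(e_i)}^*\psi}{f}{}$. You would need either to run the estimate for this dense family, or to add the short observation that $T_{P_{\sqrt c}\cdot b}=T_{P_{\sqrt c}}\circ(b\otimes_1\id)$ and smooth rank-one $b$ have bounded commutator with $D\otimes_2\id$, so the single-generator estimate propagates to a dense set. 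With that addition, your argument and the paper's coincide.
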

\begin{proof}
In this proof, we identify $G\ltimes C_0(G)$ with $\ca{K}(L^2(G))$, and regard $[c]$ as the rank one projection $P_{\sqrt{c}}$.
In the bimodule language, $[c]$ is represented by $(P_{\sqrt{c}}\circ \ca{K}(L^2(G)),0)\in\Psi(\bb{C},G\ltimes C_0(G))$. 
Any element of the module $P_{\sqrt{c}}\circ \ca{K}(L^2(G))$ can be written as $P_{\sqrt{c}}\circ k=P_{\sqrt{c}}^3\circ k$ for some $k\in \ca{K}(L^2(G))$. Therefore, for any $v\in L^2(G)$,
\begin{align*}
P_{\sqrt{c}}\circ k\otimes_{\ca{K}(L^2(G))}v&=P_{\sqrt{c}}\circ P_{\sqrt{c}}\otimes_{\ca{K}(L^2(G))} P_{\sqrt{c}}\circ k(v)\\
&=P_{\sqrt{c}}\circ P_{\sqrt{c}}\otimes_{\ca{K}(L^2(G))}\sqrt{c}\innpro{\sqrt{c}}{k(v)}{}.
\end{align*}
It tells us that $P_{\sqrt{c}}\circ \ca{K}(L^2(G))\otimes_{\ca{K}(L^2(G))}L^2(G)\cong \bb{C}$ by 
$P_{\sqrt{c}}\circ k \otimes v\mapsto \innpro{\sqrt{c}}{k(v)}{}$, and we identify two modules in the next paragraph.

To check the statement, let us recall Proposition \ref{Kuc}. 
Since $[c]=(P_{\sqrt{c}}\circ\ca{K}(L^2(G)),0)$, the second and the third conditions are satisfied. To verify the first condition, take a compact operator $k$ of the form $\phi\otimes \psi^*$, where $\phi$ and $\psi$ are smooth and compactly supported.
Since $\id\otimes_1 \Dirac_R$ commutes with $k\otimes_1\id$, it essentially suffices to verify that $T_{P_{\sqrt{c}}\circ k}\circ D$ is a bounded operator. 
Since $D=\sum dR_{s(e_i)}\otimes\gamma(e_i)$ is a finite sum, and $\gamma(e_i)$ is bounded, it suffices to check the correspondence $C_c^\infty(G)\ni f\mapsto P_{\sqrt{c}}\circ k\otimes_{\ca{K}(L^2(G))} dR_{s(e_i)}(f)=\innpro{\sqrt{c}}{k(dR_{s(e_i)}(f))}{}\in\bb{C}$ is bounded. It is true because $\innpro{\sqrt{c}}{k(dR_{s(e_i)}(f))}{}=
\innpro{\sqrt{c}}{\phi\innpro{dR_{s(e_i)}^*\psi}{f}{}}{}$, $\psi$ is smooth, and $dR_{s(e_i)}^*$ is also a differential.



\end{proof}
\begin{rmk}\label{aD is bounded}
The above proof essentially contains an important argument: $k\circ dR:L^2\to L^2$ is bounded for ``smooth'' $k$. We will sometimes use this fact.

\end{rmk}

It is not clear that the above index coincides with the analytic index. Let us verify the coincidence of two indices.
\begin{pro}\label{comp of a-ind}
The analytic index is given by $(S_G\otimes (G\ltimes_{-\tau}\bb{C}),\Dirac_R)$. Two indices are isomorphic to one another as Kasparov $\bb{C}$-$(G\ltimes_\tau \bb{C})$-modules.
\end{pro}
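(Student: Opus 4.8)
\textbf{Proof proposal for Proposition~\ref{comp of a-ind}.}
The plan is to attack the statement in two stages. First I would identify the analytic index $\ind_{G\ltimes_\tau\bb{C}}(x)$ explicitly as a Kasparov $\bb{C}$-$(G\ltimes_\tau\bb{C})$-module, following Definition~\ref{def of a-ind} but adapted to the twisted setting: the underlying module is the completion of $C_c(G,\tau)\otimes S$ with respect to the $(G\ltimes_\tau\bb{C})$-valued inner product coming from integration over $G$ against the Spinor inner product, and the operator is $\Dirac_R$ itself. The slightly delicate point is the level bookkeeping: sections of $\ca{L}$ correspond to functions on $G^\tau$ at level $-1$, so the completion naturally carries a $G\ltimes_{-\tau}\bb{C}$-module structure, which is why the statement records it as $(S_G\otimes(G\ltimes_{-\tau}\bb{C}),\Dirac_R)$ — this is just the observation that $L^2(G,\tau)$ as a right $G$-module, completed, collapses to $S_G\otimes(G\ltimes_{-\tau}\bb{C})$ because the $L^2(G)$-factor is absorbed into the group algebra, exactly dual to the computation in Proposition~\ref{j-hom for locally compact}. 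I would spell this reduction out: the map $\phi\otimes s\mapsto$ (the function $g\mapsto \langle\phi,R_g(-)\rangle s$) realizes the isomorphism, and $\Dirac_R$ is transported to the stated operator by the Leibniz-rule computation already used in the previous subsection.

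Second, I would compare this with $\mu^G_\tau(x)=(S\otimes(G\ltimes_\tau\bb{C}),\Dirac_L)$ from Proposition~\ref{value of the assembly map}. The two modules differ on their face in two ways: the coefficient algebra appears as $G\ltimes_{-\tau}\bb{C}$ versus $G\ltimes_\tau\bb{C}$, and the operator is $\Dirac_R$ versus $\Dirac_L$. Both discrepancies are resolved by a single involution-type isomorphism $G\ltimes_{-\tau}\bb{C}\xrightarrow{\sim}(G\ltimes_\tau\bb{C})$, or more precisely an anti-isomorphism $f\mapsto f^*$ (or $f(g)\mapsto f(g^{-1})$) that intertwines $\Dirac_R$ with $\Dirac_L$: the left and right regular representations of the abelian group $G^\tau$ are conjugate via $g\mapsto g^{-1}$, and this conjugation sends level $-1$ to level $+1$. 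So I would construct the explicit unitary $u:S\otimes(G\ltimes_{-\tau}\bb{C})\to S\otimes(G\ltimes_\tau\bb{C})$ implementing this, check it is $\bb{C}$-linear on the left (trivial, since $\bb{C}$ is the source algebra), that it is $(G\ltimes_\tau\bb{C})$-linear once the right-module structure on the source is twisted through the anti-isomorphism, that it preserves the inner products, and that $u\Dirac_R u^{-1}=\Dirac_L$ on the common core $S\otimes C_c^\infty(G,\tau)$. The last equality is where the abelianness of $G$ (Remark~\ref{excuse}) is genuinely used: $dL_X$ and $-dR_X$ agree up to the adjoint action, which is trivial here, so $\Dirac_L$ and $\Dirac_R$ are literally swapped by $g\mapsto g^{-1}$ with no Spin-representation correction term.

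The main obstacle I expect is not any single computation but getting the bookkeeping of the three twisting parameters consistent: the level of $\ca{L}$-sections ($-1$), the level at which $G$ acts on $L^2(G,\tau)$ ($+1$), and the resulting module structures over $G\ltimes_{\pm\tau}\bb{C}$, together with the fact that the analytic-index construction of Definition~\ref{def of a-ind} naturally produces a \emph{right} module whereas $\Dirac_R$ is built from the \emph{right} regular representation — so one has to be careful that the module and the operator are ``on the same side.'' Once the anti-isomorphism $G\ltimes_{-\tau}\bb{C}\cong G\ltimes_\tau\bb{C}$ is pinned down and shown to intertwine $\Dirac_R\leftrightarrow\Dirac_L$, the isomorphism of Kasparov modules is immediate, since both cycles have vanishing ``compactness obstruction'' (the operators have compact resolvent after tensoring down, inherited from the finite-dimensional Dirac operator $D$ on the base via Proposition~\ref{value of the assembly map}). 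I would close by remarking that this is precisely the twisted analogue of the classical fact that the assembly map computes the analytic index, specialized to $X=G$.
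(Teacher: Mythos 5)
Your proposal is correct and takes essentially the same route as the paper: identify the analytic index module via Definition~\ref{def of a-ind}, then exhibit the $(G\ltimes_\tau\bb{C})$-Hilbert-module isomorphism $G\ltimes_{-\tau}\bb{C}\to G\ltimes_\tau\bb{C}$ given by $f\mapsto f^\lor$, $f^\lor(g):=f(g^{-1})$, and observe that it carries $\Dirac_R$ to $\Dirac_L$ (using abelianness of $G$, exactly as you note). The only small imprecision is offering $f\mapsto f^*$ as an alternative formula: that map is conjugate-linear, whereas the needed Hilbert-module isomorphism must be $\bb{C}$-linear, so only the $f\mapsto f^\lor$ variant works.
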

\begin{proof}
The first statement is clear from Definition \ref{def of a-ind}. 

For the second one, we give an isomorphism $\Phi:G\ltimes_{-\tau}\bb{C}\to G\ltimes_\tau \bb{C}$ as $G\ltimes_\tau\bb{C}$-Hilbert modules:
$\Phi(f)(g):=f^{\lor}(g):=f(g^{-1}).$ One can verify that the $\Phi$ gives a Hilbert module isomorphism, if he notices that:
$$f*b(x):=\int_G f(g^{-1}.x)b(g^{-1})dg,$$
$$\innpro{f_1}{f_2}{}(g):=\int_G\overline{f_1(x)}f_2(g^{-1}x)dx$$
give the $(G\ltimes_\tau\bb{C})$-Hilbert module structure on $G\ltimes_{-\tau}\bb{C}$ for $f$, $f_1$, $f_2\in G\ltimes_{-\tau}\bb{C}$ and $b\in G\ltimes_\tau\bb{C}$.
Clearly $\Dirac_L$ corresponds to $\Dirac_R$ under the isomorphism $\Phi$.
\end{proof}

We add a small remark about the above. We can rewrite the isomorphism $\Phi$ in the operator language. One can check the following by a simple calculation.
\begin{lem}\label{small remark on a-ind}
Let $(V,\sigma)$ be a $\tau$-twisted representation of $G$ at level $1$. Then the dual space $(V^*,\sigma^*)$ is at level $-1$ by the dual representation $\sigma^*(g)={}^t\sigma(g^{-1})$. We can define standard homomorphisms
$$\Op^+:G\ltimes_\tau \bb{C}\to \End(V^*),\;\;\Op^-:G\ltimes_{-\tau} \bb{C}\to \End(V).$$
If we define a linear map $\Psi:\End(V)\to\End(V^*)$ by the transpose of operators, the following diagram commutes:
$$\begin{CD}
G\ltimes_{-\tau}\bb{C} @>\Op^->> \End(V) \\
@V\Phi VV @VV\Psi V \\
G\ltimes_{\tau}\bb{C} @>\Op^+>> \End(V^*).
\end{CD}$$
Thanks to the diagram, we can induce an $\End(V^*)$-Hilbert module structure on $\End(V)$ as follows:
$$f\cdot b:={}^tb\circ f$$
$$\innpro{f_1}{f_2}{}={}^t(f_2\circ f_1^*)$$
for $f$, $f_1$, $f_2\in \End(V)$, $b\in \End(V^*)$.

More concretely, $\Psi(v\otimes f)$ is nothing but $f\otimes v$ for $v\in V=V^{**}$ and $f\in V^*$.

\end{lem}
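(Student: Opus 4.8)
The plan is to verify each assertion by a direct computation, organized around the two standard homomorphisms $\Op^\pm$ and the transpose map $\Psi$. First I would fix notation: for $b\in G\ltimes_\tau\bb{C}$ set $\Op^+(b):=\int_G b(g)\sigma^*(g)\,dg$ acting on $V^*$, and for $f\in G\ltimes_{-\tau}\bb{C}$ set $\Op^-(f):=\int_G f(g)\sigma(g)\,dg$ acting on $V$; one checks these are well-defined (the level conditions match the $U(1)$-equivariance of $\sigma$, resp.\ $\sigma^*$, so the integrands descend to $G$) and multiplicative, using the twisted convolution on $G\ltimes_{\pm\tau}\bb{C}$ together with the cocycle identity $\sigma(g)\sigma(h)=\tau(g,h)\sigma(gh)$ and its dual version. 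This is routine and I would not write it out in full.

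The heart of the matter is commutativity of the square. Take $f\in G\ltimes_{-\tau}\bb{C}$. On one side, $\Psi(\Op^-(f))={}^t\!\left(\int_G f(g)\sigma(g)\,dg\right)=\int_G f(g)\,{}^t\sigma(g)\,dg$. On the other side, $\Phi(f)(g)=f(g^{-1})=f^\lor(g)$, so
$$
\Op^+(\Phi(f))=\int_G f(g^{-1})\,\sigma^*(g)\,dg=\int_G f(g^{-1})\,{}^t\sigma(g^{-1})\,dg,
$$
and the change of variables $g\mapsto g^{-1}$ (unimodularity, Remarks \ref{ass. of grp}(1)) turns this into $\int_G f(g)\,{}^t\sigma(g)\,dg$, which agrees. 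I would then observe that the identification $\Phi$ of Proposition \ref{comp of a-ind} intertwines the left $(G\ltimes_\tau\bb{C})$-actions and inner products with the operations $f\cdot b={}^tb\circ f$ and $\innpro{f_1}{f_2}{}={}^t(f_2\circ f_1^*)$ on $\End(V)$: this is just transporting the module structure through $\Psi$ and using that $\Psi$ is a linear anti-isomorphism compatible with adjoints, i.e.\ ${}^t(ST)={}^tT\,{}^tS$ and ${}^t(S^*)=({}^tS)^*$ relative to the chosen pairings. Finally, for the last sentence, I would compute on a rank-one tensor: $\Op^-$ sends a decomposable element to an operator $v\otimes f\in V\otimes V^*\cong\End(V)$ acting by $w\mapsto f(w)v$, and its transpose acts on $V^*$ by $\phi\mapsto \phi(v)f$, which is precisely $f\otimes v\in V^*\otimes V^{**}\cong\End(V^*)$ under the canonical identification $V=V^{**}$; hence $\Psi(v\otimes f)=f\otimes v$.

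The main obstacle — really the only subtle point — is bookkeeping the interaction between the transpose, the adjoint, and the twisting cocycle: one must be careful that $\sigma^*(g)={}^t\sigma(g^{-1})$ indeed produces a $(-\tau)$-representation turning into a $\tau$-representation, that the involutions $f^*(g)=\overline{f(g^{-1})}$ on the two crossed products are exchanged correctly by $\Phi$, and that no stray complex conjugate or inverse is dropped when passing between $\innpro{\cdot}{\cdot}{}$ on modules and $\circ,{}^t,{}^*$ on operators. Once the conventions are pinned down, every verification is a one-line change of variables, so I would present the argument compactly rather than belaboring the algebra.
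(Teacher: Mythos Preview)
Your proposal is correct and is precisely the ``simple calculation'' the paper alludes to without writing out; the paper gives no proof beyond that remark. Your explicit verification of $\Psi\circ\Op^-=\Op^+\circ\Phi$ via the change of variables $g\mapsto g^{-1}$ and the rank-one transpose computation $\Psi(v\otimes f)=f\otimes v$ are exactly what is needed.
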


For infinite-dimensional cases, the function model for a group $C^*$-algebra does not work, but the operator model does work. So the above plays an important role to study the analytic index for infinite-dimensional cases.

\section{Infinite-dimensional case}

\subsection{The setting, the factorization and the main result}

We work under Assumption \ref{setting}. Firstly, we give the $U(1)$-central extension $\tau$ which we study.

\begin{dfn}[\cite{FHTII}, \cite{T}]\label{central extension}
Under the canonical decomposition $LT=T\times \Omega T=T\times\Pi_T\times \Omega T_0$, where $\Omega T$ is the group consisting of base point-preserving loops, $\Pi_T$ is the set of connected components of $\Omega T$, that is, the set of rotation numbers, and $\Omega T_0$ is the identity component of $\Omega T$. For a fixed $k\in\bb{Z}-\{0\}$, $\tau:LT\times LT\to U(1)$ is the $2$-cocycle defined by the formula
\begin{itemize}
\item $\tau(T\times\Pi_T,\Omega T_0)=\tau(\Omega T_0,T\times \Pi_T)=1$,
\item $\tau((t_1,n_1),(t_2,n_2))=t_2^{kn_1}$, and 
\item $\tau(l_1,l_2)=\exp\bra{i\int l_1\frac{dl_2}{d\theta}d\theta}$.
\end{itemize}
The $U(1)$-central extension of $LT$ is given by $LT\times U(1)$ as a space, and
$$(l_1,z_1)\cdot (l_2,z_2):=(l_1l_2,z_1z_2\tau(l_1,l_2))$$
as a group. $LT^\tau$ denotes the $U(1)$-central extension of $LT$ given above.

\end{dfn}
\begin{lem}[\cite{FHTII}]
We can restrict the $U(1)$-central extension of $LT$ to $\Omega T_0$ and $T\times  \Pi_T$. Using the same symbol for the restrictions of $\tau$,
$$LT^\tau\cong (T\times\Pi_T)^\tau\boxtimes \Omega T_0^\tau.$$
\end{lem}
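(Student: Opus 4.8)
The plan is to exhibit an explicit isomorphism which is, up to identifications, the identity map on underlying sets, once one observes that the cocycle $\tau$ is ``block–diagonal'' with respect to the decomposition $LT=(T\times\Pi_T)\times\Omega T_0$.

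First I would record the group–theoretic setup. Evaluation at the basepoint gives a split extension $1\to\Omega T\to LT\to T\to1$, and since $T$ is abelian so is $LT$; hence $LT=T\times\Omega T$ as (topological) groups, and likewise $\Omega T=\Pi_T\times\Omega T_0$ with $\Pi_T\cong\pi_0(\Omega T)$. Thus $T\times\Pi_T$ and $\Omega T_0$ sit inside $LT$ as complementary subgroups whose product is all of $LT$. The restrictions $\tau_A:=\tau|_{(T\times\Pi_T)^2}$ and $\tau_B:=\tau|_{(\Omega T_0)^2}$ are $2$-cocycles (any restriction of a cocycle to a subgroup is one), and by definition these are the cocycles defining $(T\times\Pi_T)^\tau$ and $\Omega T_0^\tau$. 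Recall also that $(T\times\Pi_T)^\tau\boxtimes\Omega T_0^\tau$ is the quotient of $(T\times\Pi_T)^\tau\times\Omega T_0^\tau$ by the antidiagonal copy of $U(1)$; concretely it is the set $(T\times\Pi_T)\times\Omega T_0\times U(1)$ equipped with the twisted multiplication whose cocycle is $\bigl((a_1,l_1),(a_2,l_2)\bigr)\mapsto\tau_A(a_1,a_2)\,\tau_B(l_1,l_2)$.

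The heart of the argument is the identity
$$\tau\bigl((a_1,l_1),(a_2,l_2)\bigr)=\tau_A(a_1,a_2)\,\tau_B(l_1,l_2)\qquad(a_i\in T\times\Pi_T,\ l_i\in\Omega T_0).$$
For the cocycle of Definition \ref{central extension} this is essentially visible from the formulas themselves (the factor $t_2^{kn_1}$ involves only the $T\times\Pi_T$-coordinates, while $\exp(i\int l_1\,dl_2/d\theta\,d\theta)$ involves only the $\Omega T_0$-coordinates); but it also follows purely formally from the vanishing of the cross terms $\tau(T\times\Pi_T,\Omega T_0)=\tau(\Omega T_0,T\times\Pi_T)=1$. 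Writing $(a_i,l_i)=\alpha_i\lambda_i$ with $\alpha_i$ in the $T\times\Pi_T$-copy and $\lambda_i$ in the $\Omega T_0$-copy, one applies the $2$-cocycle identity $\tau(g_1,g_2)\tau(g_1g_2,g_3)=\tau(g_2,g_3)\tau(g_1,g_2g_3)$ a few times — using commutativity of $LT$ to move the $\alpha$'s past the $\lambda$'s, and killing every term in which a $T\times\Pi_T$-element is paired with an $\Omega T_0$-element — thereby peeling off $\tau(\alpha_1,\alpha_2)=\tau_A(a_1,a_2)$ and $\tau(\lambda_1,\lambda_2)=\tau_B(l_1,l_2)$ and nothing else.

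Given this identity, I would conclude by checking that the set-theoretic identity map
$$\Phi\colon LT^\tau=LT\times U(1)\longrightarrow (T\times\Pi_T)\times\Omega T_0\times U(1)=(T\times\Pi_T)^\tau\boxtimes\Omega T_0^\tau,\quad \bigl((a,l),z\bigr)\mapsto \bigl((a,l),z\bigr),$$
is an isomorphism of $U(1)$-central extensions: it is a bijection by construction, it is $U(1)$-equivariant for the central subgroups, it covers the identity of $(T\times\Pi_T)\times\Omega T_0$, and it is multiplicative precisely because, after the displayed identity, the two sides carry twisted multiplications with the same cocycle; finally $\Phi$ is a homeomorphism for the product topologies. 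The only genuinely delicate point is the cocycle bookkeeping in the displayed identity — everything else is formal — and even that is short once one notes it can be read off directly from Definition \ref{central extension}.
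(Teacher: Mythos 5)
The paper offers no proof of this lemma at all: it is stated as a fact imported from \cite{FHTII}, immediately after Definition \ref{central extension}, with no argument. So there is no ``paper's proof'' for you to match; what you have written is a self-contained proof that the paper leaves to the reference.

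That said, your argument is correct, and both of the routes you sketch to the key identity
$\tau\bigl((a_1,l_1),(a_2,l_2)\bigr)=\tau_A(a_1,a_2)\,\tau_B(l_1,l_2)$
do work. Reading it off the explicit formulas is immediate, but your remark that it also follows formally is worth spelling out, because it simultaneously addresses a small gap in Definition \ref{central extension} itself: that definition only prescribes $\tau$ on pairs from $T\times\Pi_T$, pairs from $\Omega T_0$, and mixed pairs, and one needs the cocycle identity to see that these constraints extend uniquely (to the product cocycle) on all of $LT\times LT$. Concretely, applying the $2$-cocycle identity with $(g_1,g_2,g_3)=(\alpha_1,\alpha_2,\lambda_1)$ gives $\tau(\alpha_1,\alpha_2\lambda_1)=\tau(\alpha_1,\alpha_2)$; with $(\alpha_1,\lambda_1,\alpha_2)$ and commutativity gives $\tau(\alpha_1\lambda_1,\alpha_2)=\tau(\alpha_1,\alpha_2)$; with $(\alpha_1\alpha_2,\lambda_1,\lambda_2)$ gives $\tau(\alpha_1\alpha_2\lambda_1,\lambda_2)=\tau(\lambda_1,\lambda_2)$; and finally with $(\alpha_1\lambda_1,\alpha_2,\lambda_2)$ these assemble into the displayed factorization. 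Once the cocycle is seen to be the product cocycle, your identity map is indeed an isomorphism of $U(1)$-central extensions, since $\boxtimes$ is by definition the extension with cocycle $\tau_A\otimes\tau_B$. The proposal is complete and correct.
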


We study an infinite-dimensional manifold $\ca{M}$ equipped with an $LT$-action, and a proper equivariant map $\Phi:\ca{M}\to L\fra{t}^*$, where $LT$ acts on $L\fra{t}^*$ via the gauge action: $l.A:=A+l^{-1}dl$, as mentioned in Assumption \ref{setting}.
%
%
%
The commutativity of $T$ implies a simplification as in \cite{T}. As a result, we get the following factorization.

\begin{pro}[\cite{T}]
We can define a compact manifold $M:=\ca{M}/\Omega T$ and a $\Pi_T$-principal bundle $\widetilde{M}\to M=\ca{M}/\Omega T_0$, that is, the following diagram commutes:
$$\begin{CD}
\ca{M} @>>> \widetilde{M} @>>> M \\
@V\Phi VV @V\widetilde{\phi}VV @VV\phi V \\
L\fra{t}^* @>>\int> \fra{t} @>>\exp> T.
\end{CD}$$
Since $T$ commutes with $\Omega T$, $T$ acts on $M$ and $\phi:M\to T$ is $T$-invariant.
Moreover, since the bundle $\int:L\fra{t}^* \to \fra{t}$ is trivial, we can trivialize $\ca{M} \to \widetilde{M}$ canonically: $\ca{M}\cong\widetilde{M}\times \Omega T_0$.

Briefly, $\ca{M}$ is the product of $\widetilde{M}$ and $\Omega T_0$ including the group action and the twisting.
\end{pro}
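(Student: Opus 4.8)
The plan is to construct the diagram one column at a time, starting from the right. First I would unwind the gauge action $l.A = A + l^{-1}dl$ of $LT$ on $L\fra{t}^*$. Viewing $A \in L\fra{t}^*$ as a $\fra{t}$-valued $1$-form on $S^1$, the total-integral map $\int : L\fra{t}^* \to \fra{t}$, $A \mapsto \int_{S^1} A$, satisfies $\int_{S^1}(l.A) = \int_{S^1} A + \int_{S^1} l^{-1}dl$; since $\int_{S^1} l^{-1}dl = 0$ for $l \in \Omega T_0$, lies in the integral lattice $\Pi_T \subset \fra{t}$ for $l \in \Pi_T$, and vanishes for constant loops (as $LT$ is abelian), the map $\int$ is $\Omega T_0$-invariant and descends through $\exp : \fra{t} \to T$ to an $\Omega T$-invariant map. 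Moreover $l \mapsto l^{-1}dl$ identifies $\Omega T_0$ with the space of mean-zero (i.e.\ exact) $\fra{t}$-valued $1$-forms, so $L\fra{t}^*$ splits $\Omega T_0$-equivariantly and $\int : L\fra{t}^* \to \fra{t}$ is a trivial principal $\Omega T_0$-bundle, while $\exp : \fra{t} \to T$ realizes $L\fra{t}^*/\Omega T \cong T$ as a principal $\Pi_T$-bundle.

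Second, I would transfer this along the proper equivariant map $\Phi : \ca{M} \to L\fra{t}^*$. The crucial observation is that $\Omega T_0$ acts freely and properly on $\ca{M}$: freely, because a loop fixing $x$ fixes $\Phi(x)$ and $\Omega T_0$ acts freely on $L\fra{t}^*$; properly, because $\Omega T_0 \curvearrowright L\fra{t}^*$ is proper and $\Phi$ is proper. Hence $\widetilde{M} := \ca{M}/\Omega T_0$ is a manifold, $\ca{M} \to \widetilde{M}$ is a principal $\Omega T_0$-bundle, and the induced $\widetilde{\phi} : \widetilde{M} \to \fra{t}$ exhibits $\ca{M} \to \widetilde{M}$ as the pullback of $\int : L\fra{t}^* \to \fra{t}$ along $\widetilde{\phi}$, because on each $\Omega T_0$-orbit $\Phi$ is an equivariant map of $\Omega T_0$-torsors, hence a diffeomorphism onto the corresponding fibre. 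A pullback of a trivial bundle is trivial, so $\ca{M} \cong \widetilde{M} \times \Omega T_0$ compatibly with all the structure.

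Third, the residual $\Pi_T$-action on $\widetilde{M}$ is again free and proper by the same reasoning applied over $\exp : \fra{t} \to T$, so $M := \widetilde{M}/\Pi_T = \ca{M}/\Omega T$ is a manifold, $\widetilde{M} \to M$ is a principal $\Pi_T$-bundle, and $\phi : M \to T$ is induced by $\widetilde{\phi}$. Properness of $\Phi$ passes to properness of $\phi$, and since $T$ is compact, $M = \phi^{-1}(T)$ is compact. As $LT$ is abelian, $T$ commutes with $\Omega T$, so the $T$-action descends to $M$; it is $T$-invariant for $\phi$ because constant loops act trivially on $L\fra{t}^*$ and hence on its quotients $\fra{t}$ and $T$. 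Finally, the compatibility of the $\tau$-twisting is read off from the decomposition $LT^\tau \cong (T \times \Pi_T)^\tau \boxtimes \Omega T_0^\tau$ of the preceding lemma, the $\Omega T_0^\tau$-part acting along the $\Omega T_0$-factor and the $(T \times \Pi_T)^\tau$-part along $\widetilde{M}$.

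The step I expect to be the main obstacle is the infinite-dimensional differential-topological bookkeeping: verifying in the appropriate category of (Fr\'echet or Hilbert) manifolds that $\Omega T_0 \curvearrowright \ca{M}$ is free and proper so that $\widetilde{M}$ is genuinely a smooth manifold and $\ca{M} \to \widetilde{M}$ is locally trivial, and that the ``pullback of a trivial bundle is trivial'' argument goes through with the infinite-dimensional structure group $\Omega T_0$. All of this rests on the precise meaning of properness of $\Phi$ and on the constructions already carried out in \cite{T}, which is why the proposition is quoted from there rather than reproved from scratch.
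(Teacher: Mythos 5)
The paper states this proposition as a recall from \cite{T} and gives no proof of it, so there is no in-text argument to compare yours against. That said, your reconstruction follows the natural route and appears sound: the identification of $\Omega T_0$ with $\ker(\int)$ via $l\mapsto l^{-1}dl$ makes $\int:L\fra{t}^*\to\fra{t}$ a trivial $\Omega T_0$-bundle; equivariance and properness of $\Phi$ carry freeness and properness from the gauge action on $L\fra{t}^*$ over to $\ca{M}$, giving the quotient manifolds and exhibiting $\ca{M}\to\widetilde{M}$ as a pullback of a trivial bundle, hence trivial; and the residual $\Pi_T$- and $T$-actions descend as you describe. You also correctly flag where the genuine work lies, namely the Fr\'echet-category slice and quotient theorems needed to make ``free and proper action gives a principal bundle'' rigorous in infinite dimensions, which is presumably what the proof in \cite{T} actually occupies itself with. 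One point worth spelling out more explicitly: to pass from properness of $\Phi$ to properness of $\widetilde{\phi}$ (and hence to compactness of $M=\phi^{-1}(T)$), one should invoke the canonical trivialization from your second paragraph, under which $\Phi$ becomes $\widetilde{\phi}\times\mathrm{id}_{\Omega T_0}$; this lets you pull back compact sets of the form $K_1\times\{0\}\subset\fra{t}\times\Omega T_0\cong L\fra{t}^*$ directly to $\widetilde{\phi}^{-1}(K_1)\times\{e\}$, whereas without it a general compact set in the infinite-dimensional $L\fra{t}^*$ does not project usefully.
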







Thanks to the decomposition, we can concentrate at $\Omega T_0$-part. 
To prove the main result Theorem \ref{Main Result}, the following is sufficient.
Let $x$ be a pair $(L^2(\Omega T_0,\tau)\otimes S_{\Omega T_0},\cancel{\partial}_R)$. We regard $x$ as a ``virtual $K$-homology class''. It is not a real $K$-homology class, because we have not defined the $C^*$-algebra $C_0(\ca{M})$ so far. But we can define several real objects related to $x$ as follows. We state it without detailed definitions.

\begin{thm}
We can define:
\begin{itemize}
\item a $C^*$-algebra which can be regarded as $\Omega T_0\ltimes C_0(\Omega T_0)$,
\item the ``image of $x$ along the partial $j$-homomorphism'' $j_\tau^{\Omega T_0}(x)\in \Psi(\Omega T_0\ltimes C_0(\Omega T_0),\Omega T_0\ltimes_\tau \bb{C})$, and
\item the ``Mishchenko line bundle'' $[c]\in \Psi(\bb{C},\Omega T_0\ltimes C_0(\Omega T_0))$.
\end{itemize}
As a result, we can define a $KK$-theoretical index $\mu^{\Omega T_0}_\tau(x)\in \Psi(\bb{C},\Omega T_0\ltimes_\tau \bb{C})$. It is represented by the completion of $(S_{\Omega T_0}\otimes (\Omega T_0\ltimes_\tau\bb{C}),\cancel{\partial}_L)$. 

The analytic index is represented by the completion of $((\Omega T_0\ltimes_{-\tau}\bb{C})\otimes S_{\Omega T_0},\Dirac_R)$, and two indices are isomorphic to one another in the sense of $KK$.


\end{thm}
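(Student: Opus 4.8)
The plan is to mirror, in the infinite-dimensional setting, the computation carried out in Section 3 for a locally compact abelian Lie group $G$, replacing $G$ by $\Omega T_0$ (which is, of course, not locally compact, so all the "$C^*$-algebras" and "modules" below must be understood as the substitute objects constructed earlier in the paper rather than genuine crossed products). Concretely, I would first recall the explicit models: $\Omega T_0\ltimes_{\pm\tau}\bb{C}$ as the operator-picture twisted group $C^*$-algebra (acting on a fixed positive-energy representation $V$ of $\Omega T_0^\tau$ at level $\pm1$), $\Omega T_0\ltimes C_0(\Omega T_0)$ defined by analogy with $\ca{K}(L^2(G))$ in Lemma \ref{cut off=proj}, the module $\Omega T_0\ltimes_\tau L^2(\Omega T_0,\tau)$, the cut-off class $[c]$ realized as a rank-one projection $P_{\sqrt c}$ onto $\bb{C}\sqrt c$, and the operators $\Dirac_R$, $\Dirac_L$ built from the left/right regular representations and the Clifford multiplication exactly as in Section 3.2. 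The point is that every formula in Section 3 was written so as to make sense after this substitution, so the body of the argument is a transcription.

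The key steps, in order, would be: (1) compute $j^{\Omega T_0}_\tau(x)$ by reproducing Proposition \ref{j-hom for locally compact} — i.e. establish the isomorphism $\Omega T_0\ltimes_\tau [L^2(\Omega T_0,\tau)\otimes S]\cong L^2(\Omega T_0)\otimes S\otimes(\Omega T_0\ltimes_\tau\bb{C})$ via the map $m(\phi_1\otimes\phi_2)(x,g)=\phi_1(x)\phi_2(x^{-1}g)$, checking the bimodule and inner-product identities just as before, and then rewrite $\widetilde{\Dirac_R}$ as $D\otimes_2\id+\id\otimes_1\Dirac_L$ by the Leibniz rule; (2) compute $[c]\otimes_{\Omega T_0\ltimes C_0(\Omega T_0)}j^{\Omega T_0}_\tau(x)$ by the same Kucerovsky-criterion argument as in Proposition \ref{value of the assembly map}: since $[c]=(P_{\sqrt c}\circ\ca{K},0)$ the second and third conditions of Proposition \ref{Kuc} are automatic, and the first reduces — using Remark \ref{aD is bounded}, that $k\circ dR$ is bounded for smooth $k$ — to boundedness of a finite sum of operators built from $\innpro{\sqrt c}{k(dR_{s(e_i)}(f))}{}$, whence $\mu^{\Omega T_0}_\tau(x)$ is represented by $(S_{\Omega T_0}\otimes(\Omega T_0\ltimes_\tau\bb{C}),\Dirac_L)$; (3) identify the analytic index: by the analogue of Definition \ref{def of a-ind} it is the completion of $((\Omega T_0\ltimes_{-\tau}\bb{C})\otimes S_{\Omega T_0},\Dirac_R)$, and then invoke the operator-picture version of Proposition \ref{comp of a-ind} / Lemma \ref{small remark on a-ind}: the transpose map $\Psi:\End(V)\to\End(V^*)$, $v\otimes f\mapsto f\otimes v$, intertwines $\Op^-$ and $\Op^+$ and carries $\Dirac_R$ to $\Dirac_L$, giving the required isomorphism of Kasparov $\bb{C}$-$(\Omega T_0\ltimes_\tau\bb{C})$-modules.

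The main obstacle — and the only place where the finite-dimensional argument does not transcribe verbatim — is step (2), the boundedness of the graded commutator in the Kucerovsky criterion, together with the regularity/self-adjointness of $\widetilde{\Dirac_R}$ on the substitute module. In the locally compact case one freely uses that $C_c^\infty(G)$ is dense, that $dR_{s(e_i)}$ has a differential adjoint, and that the cut-off $c$ is genuinely compactly supported; for $\Omega T_0$ none of "compact support", "$L^2(\Omega T_0)$", or "the Haar measure" exists literally, so each of these must be replaced by the corresponding finite-energy / algebraic substitute from \cite{T}, and one must check that $\Dirac_R$ still has dense range for $\Dirac_R\pm i$ (hence is self-adjoint and regular, by the criterion in Remark after Fact \ref{BJ}) on that substitute, and that $\sqrt c$ and its translates still lie in the relevant smooth domain so that Remark \ref{aD is bounded} applies. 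I expect this to be handled by carrying the entire computation out at the level of the dense "algebraic" subalgebras and submodules, where everything is a finite sum of honestly-defined operators, and only then passing to completions — exactly the strategy flagged at the end of Section 3, that "the operator model does work" in infinite dimensions.

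\begin{proof}[of the final statement]
See the constructions and computations of Section 4 below; the argument is the transcription of Propositions \ref{j-hom for locally compact}, \ref{value of the assembly map} and \ref{comp of a-ind} to $\Omega T_0$ outlined above.
\end{proof}
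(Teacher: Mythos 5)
You have the right overall shape: transcribe the Section~3 computation, recognize that $\Omega T_0$ is not locally compact, and do the assembly step via Kucerovsky. The final step (transposition giving $\mu^{\Omega T_0}_\tau(x)\cong\ind_{\Omega T_0\ltimes_\tau\bb{C}}(x)$) matches the paper. But there are two genuine gaps in the middle.

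First, your step (1) claims to \emph{establish an isomorphism} $\Omega T_0\ltimes_\tau[L^2(\Omega T_0,\tau)\otimes S]\cong L^2(\Omega T_0)\otimes S\otimes(\Omega T_0\ltimes_\tau\bb{C})$ by reproducing the map $m$ and the inner-product computation of Proposition~\ref{j-hom for locally compact}. That cannot work, and the paper does not do it: the left-hand side does not exist, because there is no Haar measure on $\Omega T_0$, no $C_c(\Omega T_0^\tau,\cdot)$, and hence no convolution module $\Omega T_0\ltimes_\tau L^2(\Omega T_0,\tau)$ to be isomorphic to anything. The paper takes the \emph{right-hand side} as the definition of $j^{\Omega T_0}_\tau(x)$, guided by the outcome of the finite-dimensional computation, and then proves directly (Theorem~\ref{j-hom for Omega T0}) that it satisfies the unbounded Kasparov module axioms: self-adjointness and regularity of $\widetilde{\Dirac}_R$, boundedness of $[\widetilde{\Dirac}_R,a]$, and compactness of $(1+\widetilde{\Dirac}_R^2)^{-1}a$. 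The compactness argument in particular is not a transcription — it uses the decomposition $\widetilde{\Dirac}_R^2=\partial_1+\partial_2+\partial_3$ together with the eigenvalue estimate of Lemma~\ref{estimate about PER}.

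Second, and more importantly, you never identify the construction that makes any of this converge. In the finite-dimensional case $D$ is a finite sum; on $\Omega T_0$ it is the genuinely infinite sum $\sum_n\sqrt{n}(dR_{z_n}\otimes\gamma(\overline{z_n})+dR_{\overline{z_n}}\otimes\gamma(z_n))$, which does not make sense on an arbitrary dense subspace. Your statement that ``carrying the entire computation out at the level of the dense algebraic subalgebras\ldots where everything is a finite sum'' will fix matters is false: even on $L^2_\infty(\Omega T_0)\otimes^\alg S_{\Omega T_0,\fin}$ the sum is infinite. The paper's new ingredient — and the heart of Section~4.3 — is the choice of $L^2(\Omega T_0)$ as the inductive limit $\varinjlim L^2(\bb{R}^{2k})$ along $f\mapsto f\otimes\Xi_{\sigma_{k+1}}$, with the weights $\sigma_k$ subject to the summability condition $\sum_k\sqrt{k}\,\sigma_k<\infty$, and the analogous inductive limit defining $\Omega T_0\ltimes C_0(\Omega T_0)$ via $k\mapsto k\otimes P_{\Xi_{\sigma_{k+1}}}$. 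This is exactly what gives $\|dR_{z_n}\Xi_{\sigma_n}\|\leq\sigma_n$ and hence the convergence of $D$, of $D^2$, and of the infinite sums appearing in the resolvent-compactness estimate. It also replaces both the cut-off function (via $[c]=P_\Xi$ with $\Xi=\Xi_{\sigma_1}\otimes\Xi_{\sigma_2}\otimes\cdots$) and the Haar measure simultaneously. Without spelling this out, your proof cannot get off the ground; it is where the argument genuinely departs from ``transcription.''

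Your localization of the difficulty is also slightly off: the assembly step (Kucerovsky) in the paper is declared ``almost parallel'' to Proposition~\ref{value of the assembly map} once Theorem~\ref{j-hom for Omega T0} and Proposition~\ref{convergence} are in hand. The real labor is proving that $(L^2(\Omega T_0)\otimes S_{\Omega T_0}\otimes(\Omega T_0\ltimes_\tau\bb{C}),\widetilde{\Dirac}_R)$ is an honest unbounded Kasparov module, not the Kasparov product.
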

\begin{rmk}
In fact, $L^2(\Omega T_0,\tau)$, $S_{\Omega T_0}$ and $\Dirac_L$ have been constructed in \cite{T}.
\end{rmk}

\subsection{The virtual $K$-homology class $(L^2(\Omega T_0,\tau)\otimes S_{\Omega T_0},\cancel{\partial}_R)\in $``$KK_{\Omega T_0}^\tau(C_0(\Omega T_0),\bb{C})$''}

We would like to define $j^{\Omega T_0}_\tau(L^2(\Omega T_0,\tau)\otimes S_{\Omega T_0},\Dirac_R)\in KK(\Omega T_0\ltimes C_0(\Omega T_0),\Omega T_0\ltimes_\tau \bb{C})$ by an analogy of Section 3.2,
$$(L^2(\Omega T_0)\otimes S_{\Omega T_0}\otimes (\Omega T_0\ltimes_\tau \bb{C}),D\otimes_2\id+\id\otimes_1\Dirac_L),$$
where $F_1\otimes_1 F_2$ is defined by $F_1\otimes_1 F_2(v\otimes s\otimes f)=F_1(v)\otimes F_2(s\otimes f)$ for $v\in L^2(\Omega T_0)$, $s\in S_{\Omega T_0}$, $f\in L^2(\Omega T_0,\tau)$, $F_1:L^2(\Omega T_0)\to L^2(\Omega T_0)$ and $F_2: S_{\Omega T_0}\otimes (\Omega T_0\ltimes_\tau \bb{C})\to S_{\Omega T_0}\otimes (\Omega T_0\ltimes_\tau \bb{C})$, and $F_3\otimes_2 F_4$ is defined similarly.
We recall the construction of $L^2(\Omega T_0,\tau)$, $S_{\Omega T_0}$, $\Dirac_L$ and $\Omega T_0\ltimes_{\pm\tau} \bb{C}$ in \cite{T} in this subsection, and we will define $L^2(\Omega T_0)$, $\Omega T_0\ltimes C_0(\Omega T_0)$ and $D$ in the next subsection.


\begin{dfn}
Since $\Omega T_0$ can be identified with $\{f\in C^\infty(S^1,\fra{t})\mid \int f(\theta)d\theta=0\}$, we can choose a C.O.N.S. of the Lie algebra $\Lie(\Omega T_0)$ by
$$\frac{\cos(n\theta)}{\sqrt{n\pi}},\frac{\sin(n\theta)}{\sqrt{n\pi}}$$
with respect to the inner product $\innpro{X}{Y}{}=\omega(X,J(Y))$, where $\omega(X,Y)=\int X\frac{dY}{d\theta}d\theta$, $J(\cos)=\sin$, and $J(\sin)=-\cos$. The rigid rotation on $\Omega T_0$ defines an operator $d:\Lie(\Omega T_0)\to \Lie(\Omega T_0)$.

We can define a complex basis on $\Lie(\Omega T_0)\otimes \bb{C}$ by
$$z_n:=\frac{1}{\sqrt{2}}\bra{\frac{\cos(n\theta)}{\sqrt{n\pi}}+i\frac{\sin(n\theta)}{\sqrt{n\pi}}}, \overline{z_n}:=\frac{1}{\sqrt{2}}\bra{\frac{\cos(n\theta)}{\sqrt{n\pi}}-i\frac{\sin(n\theta)}{\sqrt{n\pi}}}.$$
Notice that $dz_n=inz_n$ and $d\overline{z_n}=-in\overline{z_n}$. A finite linear combination of $z_n$'s and $\overline{z_n}$'s is called a {\bf finite energy vector}, and $\bra{\Lie(\Omega T_0)\otimes \bb{C}}_\fin$ denotes the set of finite energy vectors. Clearly it is a dense subspace of $\Lie(\Omega T_0)\otimes \bb{C}$.

\end{dfn}

\begin{dfn}[\cite{PS}]
Let $L^2(\bb{R}^\infty)_\fin$ be the algebraic symmetric algebra of $\bigoplus_{n>0} \bb{C}z_n$. It admits an inner product 
$$\innpro{z_1^{k_1}z_2^{k_2}\cdots}{z_1^{l_1}z_2^{l_2}\cdots}{}:=k_1!k_2!\cdots \delta_{k_1,l_1}\delta_{k_2,l_2}\cdots.$$
$\fra{u}(L^2(\bb{R}^\infty)_\fin)$ denotes the set of skew symmetric operators on $L^2(\bb{R}^\infty)_\fin$.
We can define a linear map $d\rho:\Lie(\Omega T_0)\to \fra{u}(L^2(\bb{R}^\infty))$ by the restriction of the following formula:
\begin{align*}
d\rho(z_n)(z_1^{k_1}z_2^{k_2}\cdots z_n^{k_n}\cdots)&:=z_1^{k_1}z_2^{k_2}\cdots z_n^{k_n+1}\cdots, \\
d\rho(\overline{z_n})(z_1^{k_1}z_2^{k_2}\cdots z_n^{k_n}\cdots)&:=-k_nz_1^{k_1}z_2^{k_2}\cdots z_n^{k_n-1}\cdots.
\end{align*}
These operators satisfy the commutation relation to be an infinitesimal $\tau$-twisted representation: $[d\rho(z_n),d\rho(\overline{z_n})]=\id$.
Moreover, $L^2(\bb{R}^\infty)_\fin$ admits an operator $d\rho(d)$ defined by $d\rho(d)(z_1^{k_1}z_2^{k_2}\cdots)=(i\sum jk_j)z_1^{k_1}z_2^{k_2}\cdots$. 
The following positive energy condition is satisfied:
\begin{itemize}
\item $[d\rho(d),d\rho(z_n)]=ind\rho(z_n)$, $[d\rho(d),d\rho(\overline{z_n})]=-ind\rho(\overline{z_n})$, and 
\item $\frac{1}{i}d\rho(d)$ has discrete spectrum and positive definite.
\end{itemize}

Taking the completion with respect to the above inner product, we get a Hilbert space $L^2(\bb{R}^\infty)$.
In fact, these operators c]ome from the unique irreducible positive energy representation of $\Omega T_0$.
The energy operator $d\rho(d)$ can be written as $-i\sum nd\rho(z_n)d\rho(\overline{z_n})$.
\end{dfn}

We adopt the following easy definition. In fact, one can understand an element of the following algebra as an ``asymptotically Gaussian function'' as in \cite{T}.

\begin{dfn}
Let $\Omega T_0\ltimes_\tau \bb{C}$ be the $C^*$-algebra $\ca{K}(L^2(\bb{R}^\infty)^*)$, 
and let $\Omega T_0\ltimes_{-\tau}\bb{C}$ be $\ca{K}(L^2(\bb{R}^\infty))$.
We sometimes regard $\Omega T_0\ltimes_{\tau}\bb{C}$ 
($\Omega T_0\ltimes_{-\tau}\bb{C}$) as a certain completion of $L^2(\bb{R}^\infty)_\fin^*\otimes^\alg L^2(\bb{R}^\infty)_\fin$ ($L^2(\bb{R}^\infty)_\fin\otimes^\alg L^2(\bb{R}^\infty)_\fin^*$ respectively).

\end{dfn}

We can also define the Spinor space for $\Lie(\Omega T_0)$, thanks to the complex structure coming from the rigid rotation.

\begin{dfn}[\cite{FHTII}]
Let $S_{\Omega T_0,\fin}$ be the algebraic exterior algebra of $\bigoplus_{n>0} \bb{C}\overline{z_n}$. It has an inner product defined by $\innpro{\overline{z_{k_1}}\wedge \overline{z_{k_2}}\wedge\cdots \wedge \overline{z_{k_p}}}{\overline{z_{l_1}}\wedge \overline{z_{l_2}}\wedge\cdots \wedge \overline{z_{l_p}}}{}:=\delta_{k_1,l_1}\delta_{k_2,l_2}\cdots\delta_{k_p,l_p}$.
The space $S_{\Omega T_0,\fin}$ also admits a Clifford multiplication of $\Lie(\Omega T_0)\otimes \bb{C}$ by
\begin{align*}
\gamma(\overline{z_n}) &:= \sqrt{2}\overline{z_n}\wedge, \\
\gamma(z_n) &:= -\sqrt{2}\overline{z_n}\rfloor,
\end{align*}
where $\rfloor$ denotes the interior product. Let $S_{\Omega T_0}$ be the completion of $S_{\Omega T_0,\fin}$ with respect to the above inner product.

The exterior algebra has a specific vector $1$. To distinguish with the lowest weight vector belonging to the symmetric algebra, we introduce the symbol $1_f$. ``$f$'' comes from ``fermion''.

Let $N$ be the number operator defined by $N(\overline{z_{k_1}}\wedge \overline{z_{k_2}}\wedge\cdots \wedge \overline{z_{k_p}}):=(\sum k_j)\overline{z_{k_1}}\wedge \overline{z_{k_2}}\wedge\cdots \wedge \overline{z_{k_p}}$. It satisfies the equality $N=-\frac{1}{2}\sum n\gamma(\overline{z_n})\gamma(z_n)$. 

\end{dfn}

\begin{dfn}
Let $L^2(\bb{R}^\infty)^*$ be the dual space of $L^2(\bb{R}^\infty)$. 
We regard $L^2(\bb{R}^\infty)^*$ as a completion of the symmetric algebra of $\bigoplus_{n>0} \bb{C}\overline{z_n}$.
It has the dual action of $\Lie(\Omega T_0)\otimes \bb{C}$, and $d\rho^*(d)$. Let $L^2(\bb{R}^\infty)_\fin^*$ be the set of finite sums of $d\rho^*(d)$-eigen vectors, which is a dense subspace.
Let $L^2(\Omega T_0,\tau)$ be the tensor product $L^2(\bb{R}^\infty)\otimes L^2(\bb{R}^\infty)^*$ as a Hilbert space, and let $L^2(\Omega T_0,\tau)_\fin$ be the algebraic tensor product $L^2(\bb{R}^\infty)_\fin\otimes^\alg L^2(\bb{R}^\infty)_\fin^*$.

The ``Dirac operator on $\Omega T_0$'' acting on $L^2(\Omega T_0,\tau)_\fin\otimes^\alg S_{\Omega T_0,\fin}$ is defined by
$$\Dirac_R:=\sum\sqrt{n}\Bigl(\id\otimes d\rho^*(\overline{z_n})\otimes 
\gamma(z_n)+
\id\otimes d\rho^*(z_n)\otimes \gamma(\overline{z_n})\Bigr).$$

This operator preserves the subspace $L^2(\Omega T_0,\tau)_\fin\otimes^\alg S_{\Omega T_0,\fin}$.
\end{dfn}

\begin{rmk}
It is not clear that $\Dirac_R$ makes sense, because is is defined by an infinite sum of operators. However, for any elements of $v\in S_{\Omega T_0,\fin}\otimes L^2(\Omega T_0,\tau)_\fin$, $\Dirac_R(v)$ is a finite sum, hence $\Dirac_R$ can be defined. Our new constructions $L^2(\Omega T_0)$ and $D$ require a quantitative version of this argument.
\end{rmk}

There is a Weitzenb\"ock type formula. One can verify it by a complicated computation (\cite{T}).


\begin{lem}
$$\Dirac_R^2=2\bra{\id\otimes \id\otimes N+\id\otimes \frac{d\rho^*(d)}{i}\otimes \id}.$$

in particular it is positive definite.

\end{lem}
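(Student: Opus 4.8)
The identity is purely algebraic, so the plan is to verify it on the dense invariant subspace $L^2(\Omega T_0,\tau)_\fin\otimes^\alg S_{\Omega T_0,\fin}=L^2(\bb{R}^\infty)_\fin\otimes^\alg L^2(\bb{R}^\infty)_\fin^*\otimes^\alg S_{\Omega T_0,\fin}$, on which $\Dirac_R$, $\Dirac_R^2$, $N$ and $d\rho^*(d)$ all act, vector by vector, by finite sums of operators, and then pass to closures. Write $\Dirac_R=\sum_n\sqrt{n}\,(A_n+B_n)$ with $A_n:=\id\otimes d\rho^*(\overline{z_n})\otimes\gamma(z_n)$ and $B_n:=\id\otimes d\rho^*(z_n)\otimes\gamma(\overline{z_n})$, and expand
$$\Dirac_R^2=\sum_n n\,(A_n+B_n)^2+\sum_{m\neq n}\sqrt{mn}\,(A_m+B_m)(A_n+B_n),$$
so that everything reduces to the oscillator relations on the two bosonic Fock spaces and the Clifford relations on $S_{\Omega T_0}$.

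First I would record the input relations. From $\gamma(\overline{z_n})=\sqrt{2}\,\overline{z_n}\wedge$, $\gamma(z_n)=-\sqrt{2}\,\overline{z_n}\rfloor$ and $\{\overline{z_m}\rfloor,\overline{z_n}\wedge\}=\delta_{mn}$ one reads off $\gamma(z_m)^2=\gamma(\overline{z_m})^2=0$, $\{\gamma(z_m),\gamma(z_n)\}=\{\gamma(\overline{z_m}),\gamma(\overline{z_n})\}=0$, and, for $m=n$, $\gamma(\overline{z_n})\gamma(z_n)=-2\Pi_n$ and $\gamma(z_n)\gamma(\overline{z_n})=-2\id+2\Pi_n$, where $\Pi_n$ is the projection of $S_{\Omega T_0}$ onto the spinors containing $\overline{z_n}$; in particular $\sum_n n\Pi_n=N$. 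On the bosonic side distinct modes are independent, so $d\rho^*(\overline{z_m})$ and $d\rho^*(z_n)$ commute for $m\neq n$, while the stated relation $[d\rho(z_n),d\rho(\overline{z_n})]=\id$ and the stated energy formula $d\rho(d)=-i\sum_n n\,d\rho(z_n)d\rho(\overline{z_n})$ dualize (transpose against the pairing $L^2(\bb{R}^\infty)_\fin^*\times L^2(\bb{R}^\infty)_\fin$) to $d\rho^*(\overline{z_n})d\rho^*(z_n)-d\rho^*(z_n)d\rho^*(\overline{z_n})=\id$ and $d\rho^*(d)=-i\sum_n n\,d\rho^*(\overline{z_n})d\rho^*(z_n)$.

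With these in hand, the off-diagonal part of $\Dirac_R^2$ vanishes: for $m\neq n$ the bosonic factors commute and the spinor factors anticommute, so $(A_m+B_m)(A_n+B_n)=-(A_n+B_n)(A_m+B_m)$ and the $(m,n)$ and $(n,m)$ contributions cancel. In the diagonal part, $A_n^2=\id\otimes d\rho^*(\overline{z_n})^2\otimes\gamma(z_n)^2=0$ and $B_n^2=0$ by isotropy of $z_n$, leaving $\Dirac_R^2=\sum_n n\,(A_nB_n+B_nA_n)$. Putting $P_n:=d\rho^*(\overline{z_n})d\rho^*(z_n)$, $Q_n:=d\rho^*(z_n)d\rho^*(\overline{z_n})$ (so $P_n-Q_n=\id$) and substituting the Clifford expressions above,
$$A_nB_n+B_nA_n=\id\otimes\bigl(P_n\otimes\gamma(z_n)\gamma(\overline{z_n})+Q_n\otimes\gamma(\overline{z_n})\gamma(z_n)\bigr)=-2\,\id\otimes P_n\otimes\id+2\,\id\otimes\id\otimes\Pi_n,$$
where the second equality uses $P_n(-2\id+2\Pi_n)+Q_n(-2\Pi_n)=-2P_n+2(P_n-Q_n)\Pi_n$. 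Summing over $n$, using $\sum_n nP_n=-\tfrac1i d\rho^*(d)$ and $\sum_n n\Pi_n=N$, yields exactly $\Dirac_R^2=2\bigl(\id\otimes\tfrac{d\rho^*(d)}{i}\otimes\id+\id\otimes\id\otimes N\bigr)$. Positivity then follows at once, since $N\geq 0$ and $\tfrac{d\rho^*(d)}{i}=-\sum_n nP_n\geq 0$, each $-P_n$ being the number operator of the $n$-th mode on $L^2(\bb{R}^\infty)^*$.

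The one genuinely delicate point — and the main obstacle — is the sign bookkeeping on the dual Fock space $L^2(\bb{R}^\infty)^*$: one must fix, consistently with the conventions of \cite{T}, which of $d\rho^*(\overline{z_n}),d\rho^*(z_n)$ is creation and which is annihilation, and thereby justify both $P_n-Q_n=\id$ and $d\rho^*(d)=-i\sum_n n\,d\rho^*(\overline{z_n})d\rho^*(z_n)$; everything else is routine Clifford/oscillator algebra. The apparent analytic difficulty — that $\Dirac_R$ and $\Dirac_R^2$ are infinite sums — evaporates once one works on $L^2(\Omega T_0,\tau)_\fin\otimes^\alg S_{\Omega T_0,\fin}$, where each operator in sight is, applied to any vector, a finite sum.
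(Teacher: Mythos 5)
Your proof is correct, and it is essentially the ``complicated computation'' the paper alludes to: the paper itself gives no argument here, merely citing \cite{T}. The key checks --- $A_n^2=B_n^2=0$ by isotropy, cancellation of the $m\neq n$ terms because the (even) bosonic factors commute while the (odd) Clifford factors anticommute, and the diagonal identity $A_nB_n+B_nA_n=-2\,\id\otimes P_n\otimes\id+2\,\id\otimes\id\otimes\Pi_n$ via $\gamma(\overline{z_n})\gamma(z_n)=-2\Pi_n$, $\gamma(z_n)\gamma(\overline{z_n})=-2\id+2\Pi_n$ and $N=\sum_n n\Pi_n$ --- all check out against the definitions given. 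The sign subtlety you flag is real: the present paper never pins down $d\rho^*$ explicitly, and the naive contragredient reading $d\sigma^*=-{}^t d\sigma$ suggested by $\sigma^*(g)={}^t\sigma(g^{-1})$ would give $\tfrac{1}{i}d\rho^*(d)\leq 0$; the automatic nonnegativity of $\Dirac_R^2$ forces instead $\tfrac{1}{i}d\rho^*(d)\geq 0$ (the energy operator on $L^2(\bb{R}^\infty)^*$), which is exactly what your relations $d\rho^*(d)=-i\sum_n n\,d\rho^*(\overline{z_n})d\rho^*(z_n)$ and $P_n-Q_n=\id$ encode, so your reading is the internally consistent one. Two small remarks: since $\Dirac_R$ acts as $\id$ on the first factor, only one bosonic Fock space ($L^2(\bb{R}^\infty)^*$) is actually in play; and the kernel $L^2(\bb{R}^\infty)\otimes\bb{C}\otimes\bb{C}1_f$ is nonzero, so the lemma's ``positive definite'' should be read as ``positive semi-definite'' --- a slip in the statement, not in your argument.
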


We call the eigenvalue of $\Dirac_R^2$ the {\bf energy}.
Let us prepare an estimate which we need later.
 
\begin{lem}\label{estimate about PER}
When $\phi=v\otimes \overline{z_1}^{k_1} \overline{z_2}^{k_2}\cdots \overline{z_p}^{k_p}\otimes s\in L^2(\bb{R}^\infty)\otimes^\alg L^2(\bb{R}^\infty)^*\otimes^\alg S_{\Omega T_0,\fin}$ is at energy $\lambda^2$, then $\|d\rho^*(z_n)\phi\|\leq \frac{|\lambda|}{\sqrt{2n}}\|\phi\|$, and
$\|d\rho^*(\overline{z_n})\phi\|\leq \bra{\frac{|\lambda|}{\sqrt{2n}}+1}\|\phi\|$.
\end{lem}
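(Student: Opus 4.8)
The statement is a quantitative norm estimate on the operators $d\rho^*(z_n)$ and $d\rho^*(\overline{z_n})$, restricted to energy eigenvectors, and the strategy is to reduce everything to the Weitzenb\"ock formula $\Dirac_R^2 = 2(\id\otimes\id\otimes N + \id\otimes d\rho^*(d)/i\otimes\id)$ together with the explicit action of the creation/annihilation operators on the monomial basis. First I would observe that since $\phi$ is a pure tensor with its $L^2(\bb{R}^\infty)$-factor $v$ and its Spinor factor $s$ fixed, the operators $\id\otimes d\rho^*(z_n)\otimes\id$ and $\id\otimes d\rho^*(\overline{z_n})\otimes\id$ act only on the middle factor $\overline{z_1}^{k_1}\cdots\overline{z_p}^{k_p}$, so it suffices to bound $\|d\rho^*(z_n)w\|$ and $\|d\rho^*(\overline{z_n})w\|$ for $w=\overline{z_1}^{k_1}\cdots\overline{z_p}^{k_p}\in L^2(\bb{R}^\infty)^*_\fin$ in terms of the ``$d\rho^*(d)/i$-weight'' of $w$ and then repackage via the energy. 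Being at energy $\lambda^2$ means $2(N\text{-weight} + d\rho^*(d)/i\text{-weight}) = \lambda^2$; since $N$ is nonnegative and $d\rho^*(d)/i$ is nonnegative on the symmetric algebra, the $d\rho^*(d)/i$-weight of the middle factor, call it $E:=\sum_j jk_j$, satisfies $2E \le \lambda^2$, i.e. $E\le\lambda^2/2$.

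Next I would compute the two norms directly from the dual-action formulas. Dualizing the formulas for $d\rho(z_n)$ and $d\rho(\overline{z_n})$, on the symmetric-algebra model of $L^2(\bb{R}^\infty)^*$ the operator $d\rho^*(\overline{z_n})$ acts (up to sign conventions) as the creation operator $\overline{z_n}\cdot$ and $d\rho^*(z_n)$ as the corresponding annihilation operator $k_n\,\overline{z_n}^{-1}\cdot$, with the inner product normalized so that $\|\overline{z_1}^{k_1}\cdots\|^2 = k_1!k_2!\cdots$. Hence $\|d\rho^*(z_n)w\|^2 = k_n^2 \cdot (k_n-1)! \prod_{j\ne n} k_j! = k_n \cdot \|w\|^2$, so $\|d\rho^*(z_n)w\| = \sqrt{k_n}\,\|w\|$, and $\|d\rho^*(\overline{z_n})w\|^2 = (k_n+1)!\prod_{j\ne n}k_j! = (k_n+1)\|w\|^2$, so $\|d\rho^*(\overline{z_n})w\| = \sqrt{k_n+1}\,\|w\|$. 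Now the key arithmetic point: $n k_n \le \sum_j j k_j = E \le \lambda^2/2$, hence $k_n \le \lambda^2/(2n)$, giving $\sqrt{k_n} \le |\lambda|/\sqrt{2n}$. This yields $\|d\rho^*(z_n)\phi\| = \sqrt{k_n}\,\|\phi\| \le \frac{|\lambda|}{\sqrt{2n}}\|\phi\|$. For the other, $\sqrt{k_n+1}\le \sqrt{k_n}+1 \le \frac{|\lambda|}{\sqrt{2n}}+1$, giving $\|d\rho^*(\overline{z_n})\phi\| \le \bigl(\frac{|\lambda|}{\sqrt{2n}}+1\bigr)\|\phi\|$, as claimed.

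Finally I would tidy up the reduction step: a general $\phi$ at energy $\lambda^2$ of the stated form is literally a single monomial tensor, so no decomposition into eigencomponents is needed, but I should note explicitly that the Weitzenb\"ock formula identifies ``energy $\lambda^2$'' with the eigenvalue equation for $\id\otimes\id\otimes N + \id\otimes d\rho^*(d)/i\otimes\id$, and that on such a pure tensor the eigenvalue splits as $N$-weight of $s$ plus $d\rho^*(d)/i$-weight of the middle factor; dropping the (nonnegative) $N$-contribution is what gives the inequality $E\le\lambda^2/2$ rather than an equality. I expect the only mild obstacle is bookkeeping with the normalization constants and sign conventions in the dual action (and making sure the factorials in the inner product are handled correctly so that annihilation picks up $\sqrt{k_n}$ and creation picks up $\sqrt{k_n+1}$); once those are pinned down the estimate is immediate from $nk_n\le E\le\lambda^2/2$ and the elementary bound $\sqrt{k_n+1}\le\sqrt{k_n}+1$.
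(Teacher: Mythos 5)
Your proof is correct and follows essentially the same route as the paper: compute $\|d\rho^*(z_n)\phi\|^2=k_n\|\phi\|^2$ (and $\|d\rho^*(\overline{z_n})\phi\|^2=(k_n+1)\|\phi\|^2$) directly from the creation/annihilation action on the monomial basis, then use the Weitzenb\"ock energy bound $2\sum_j jk_j\leq\lambda^2$ to get $k_n\leq\lambda^2/(2n)$ and finish with $\sqrt{k_n+1}\leq\sqrt{k_n}+1$. You are somewhat more explicit than the paper about the factorial bookkeeping and about deriving the second estimate, which the paper leaves implicit, but there is no substantive difference in method.
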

\begin{proof}
\begin{align*}
\innpro{d\rho^*(z_n)\phi}{d\rho^*(z_n)\phi}{}&=\innpro{(d\rho^*(z_n))^*d\rho^*(z_n)\phi}{\phi}{}\\
&=\innpro{v}{v}{}\innpro{(d\rho^*(z_n))^*d\rho^*(z_n)\overline{z_1}^{k_1} \overline{z_2}^{k_2}\cdots \overline{z_p}^{k_p}}{\overline{z_1}^{k_1} \overline{z_2}^{k_2}\cdots \overline{z_p}^{k_p}}{}\innpro{s}{s}{} \\
&= k_n\|\phi\|^2.
\end{align*}
Since $2\sum mk_m\leq \lambda^2$, we get $k_n\leq \frac{\lambda^2}{2n}$. Therefore,
$$\|d\rho^*(z_n)\phi\|=\sqrt{k_n}\|\phi\|\leq \frac{|\lambda|}{\sqrt{2n}}\|\phi\|.$$
\end{proof}

Let us recall that the isomorphism in Proposition \ref{j-hom for locally compact} involves a correspondence $\phi\mapsto \phi^{\lor}$, where $\phi^\lor(g):=\phi(g^{-1})$. This is why $\Dirac_R$ changes to $\Dirac_L$.

Let us introduce $\Dirac_L$ here.
\begin{dfn}
Let us consider a $(\Omega T_0\ltimes_\tau \bb{C})$-Hilbert module $S_{\Omega T_0}\otimes (\Omega T_0\ltimes_\tau \bb{C})$ and its subspace $S_{\Omega T_0,\fin}\otimes^\alg L^2(\bb{R}^\infty)_\fin^*\otimes^\alg L^2(\bb{R}^\infty)$.
Let 
$$\Dirac_L:=\sum_n \sqrt{n}\Bigl(\gamma(z_n)\otimes d\rho^*(\overline{z_n})\otimes \id+\gamma(\overline{z_n})\otimes d\rho^*(z_n)\otimes \id\Bigr)$$
be an operator acting on $S_{\Omega T_0,\fin}\otimes^\alg L^2(\bb{R}^\infty)_\fin^*\otimes^\alg L^2(\bb{R}^\infty)$. We use the same symbol for the closure of $\Dirac_L$.

\end{dfn}

\subsection{The Hilbert space ``$L^2(\Omega T_0)$'' and the ``Dirac operator'' $D$}



Let us notice that $L^2(\Omega T_0,\tau)\otimes S_{\Omega T_0}$ is the ``infinite tensor product'' of $L^2(\bb{R}^2,\tau)\otimes S_{\bb{R}^2}$'s, and the Dirac operator is also the infinite product of $\Dirac_{\bb{R}^2}$'s. This construction works, because $\Dirac_{\bb{R}^2}$ has the one dimensional kernel, and $L^2(\Omega T_0,\tau)\otimes S_{\Omega T_0}$ can be defined by the inductive limit of 
$$\cdots\to L^2(\bb{R}^{2n},\tau)\otimes S_{\bb{R}^{2n}} \cong \bra{L^2(\bb{R}^{2n},\tau)\otimes S_{\bb{R}^{2n}}}\otimes \ker(\Dirac_{\bb{R}^2})\hookrightarrow  L^2(\bb{R}^{2n+2},\tau)\otimes S_{\bb{R}^{2n+2}}\to\cdots.$$
This is the reason why $\Dirac_R$ can be defined.

However, the standard Dirac operator $D$ acting on $L^2(\bb{R}^{2n})\otimes S_{\bb{R}^{2n}}$ has no nonzero kernel. Therefore we can not naively consider the infinite tensor product of $D_{\bb{R}^2}$ acting on $L^2(\bb{R}^{2})\otimes S_{\bb{R}^{2}}$. 
So we need to to choose an appropriate construction of $L^2(\Omega T_0)$.

\begin{dfn}
Let $\chi_{\sigma}$ be the function on $\bb{R}^2$ defined by
$$
\chi_{\sigma}(x)=\begin{cases}
\frac{1}{\sqrt{\pi\sigma^2}} & |x|\leq \sigma \\
0 & |x|> \sigma,
\end{cases}
$$
for $\sigma>0$, and let $\Xi_{\sigma}$ be the Fourier transform $\ca{F}(\chi_{\sigma})$. Since $\chi_\sigma$ is a unit vector, so is $\Xi_{\sigma}$.
We can construct a sequence of isometric embeddings
$$ L^2(\bb{R}^2)\hookrightarrow L^2(\bb{R}^4)\hookrightarrow\cdots$$
by $i_k:L^2(\bb{R}^{2k})\ni f\mapsto f\otimes \Xi_{\sigma_{k+1}}\in L^2(\bb{R}^{2k+2})$.
Let $\{\sigma_k\}$ be a sequence of positive numbers satisfying that
\begin{equation}\label{eq:behavior of sigma}
\sum_k\sqrt{k}\sigma_k<\infty.
\end{equation}
It is a key condition.
For example, $\sigma_k=2^{-k}$ satisfies the condition (\ref{eq:behavior of sigma}).
We define $L^2(\Omega T_0)$ by the inductive limit: $L^2(\Omega T_0):=\varinjlim L^2(\bb{R}^{2k})$.

Let $L^2_\infty(\bb{R}^{2k}):=\bbra{\ca{F}(\phi)\in L^2(\bb{R}^{2k})\mid 
p\phi\in L^2(\bb{R}^{2k})\text{ for any polynomial }p}$, and $L^2_\infty(\Omega T_0)$ be the algebraic inductive limit $\varinjlim^\alg L^2_\infty(\bb{R}^{2k})$. An element of this space can be regarded as an ``{\bf asymptotically constant function}''.

We sometimes use a slightly loose symbol $\Xi=\Xi_{\sigma_{n+1}}\otimes \Xi_{\sigma_{n+2}}\otimes \cdots.$

\end{dfn}

Thanks to the good choice of $\sigma_k$'s, we can define a Dirac operator $D$ not as a formal infinite sum, but as an operator.
Let $dR_{z_n}:L^2_\infty(\Omega T_0)\to L^2_\infty(\Omega T_0)$ be the operator defined by 
$$\frac{1}{\sqrt{2}}\bra{\frac{\partial}{\partial x_n}+i\frac{\partial}{\partial y_n}}$$
and let $dR_{\overline{z_k}}$ be its complex conjugate, where $x_n=\frac{\cos(n\theta)}{\sqrt{n\pi}}$ and $y_n={\frac{\sin(n\theta)}{\sqrt{k\pi}}}$. Strictly speaking, if $j_n:L^2_\infty(\bb{R}^{2m})\hookrightarrow L^2_\infty(\Omega T_0)$ is the embedding defining the limit, $dR_{z_n}\bra{j_m(f)}$ is defined by $j_m(dR_{z_n}f)$ for sufficiently large $m$. 

Let us introduce the Dirac operator $D$ acting on $L^2(\Omega T_0)\otimes S_{\Omega T_0}$.

\begin{dfn}
The Dirac operator is defined by
$$D:=\sum_{n=1}^{n=\infty} \sqrt{n}\Bigl( dR_{z_n}\otimes\gamma(\overline{z_n})+dR_{\overline{z_n}}\otimes\gamma(z_n)\Bigr)$$
and its approximation is defined by
$$D_N^M :=\sum_{n=N}^{n=M} \sqrt{n}\Bigl( dR_{z_n}\otimes\gamma(\overline{z_n})+dR_{\overline{z_n}}\otimes\gamma(z_n)\Bigr)$$
for $N<M\leq \infty$.

Considering the isomorphism $L^2(\Omega T_0)\cong L^2(\bb{R}^{2N})\otimes L^2(\bb{R}^{2N\perp})$ and $S_{\Omega T_0}\cong S_{\bb{R}^{2N}}\otimes S_{\bb{R}^{2N\perp}}$, we can regard $D$ as $D_1^N\otimes \id+\id\otimes D_{N+1}^\infty$.
\end{dfn}
\begin{pro}\label{convergence}
The operator $D$ does make sense on $L^2_\infty(\Omega T_0)\otimes^\alg S_{\Omega T_0,\fin}$, and it is essentially self-adjoint. Moreover, $D^2$ also makes sense on $L^2_\infty(\Omega T_0)\otimes S_{\Omega T_0}$.

\end{pro}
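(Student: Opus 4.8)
The plan is to reduce everything to a quantitative version of the remark following Lemma 4.15: that $\sqrt{n}\,dR_{z_n}$ and $\sqrt{n}\,dR_{\overline{z_n}}$ act boundedly between the inductive-limit spaces once the tails are controlled by the summability condition $\sum_k\sqrt{k}\sigma_k<\infty$. First I would fix an elementary vector $\phi\otimes s\in L^2_\infty(\bb{R}^{2N})\otimes^{\alg} S_{\Omega T_0,\fin}$, viewed inside $L^2_\infty(\Omega T_0)\otimes^{\alg}S_{\Omega T_0}$ via $\phi\mapsto \phi\otimes\Xi$ with $\Xi=\Xi_{\sigma_{N+1}}\otimes\Xi_{\sigma_{N+2}}\otimes\cdots$. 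Then $D(\phi\otimes s)$ splits as $D_1^N(\phi\otimes s) + (\id\otimes D_{N+1}^\infty)(\phi\otimes s)$. The first term is a genuine finite sum and causes no trouble. For the second term, each summand $\sqrt{n}\,dR_{z_n}\otimes\gamma(\overline{z_n})$ with $n>N$ acts on $\phi\otimes s$ only through the factor $\Xi_{\sigma_n}$ in the $n$-th slot, producing $\sqrt{n}\,\frac{\partial}{\partial z_n}\Xi_{\sigma_n}$ tensored with the rest; since $\Xi_{\sigma_n}=\ca{F}(\chi_{\sigma_n})$ and $\chi_{\sigma_n}$ is supported in a ball of radius $\sigma_n$, the key estimate is $\|\sqrt{n}\,dR_{z_n}\Xi_{\sigma_n}\|\leq C\sqrt{n}\,\sigma_n$ (the derivative on the Fourier side is multiplication by a coordinate, whose $L^2$-norm against $\chi_{\sigma_n}/\sqrt{\pi\sigma_n^2}$ is $O(\sigma_n)$), and $\|\gamma(\overline{z_n})\|$ is bounded by $\sqrt2$ times the norm on $S_{\Omega T_0}$. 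Hence $\|(\id\otimes D_{N+1}^\infty)(\phi\otimes s)\| \leq C'\bigl(\sum_{n>N}\sqrt{n}\,\sigma_n\bigr)\|\phi\otimes s\|$, which is finite and tends to $0$ as $N\to\infty$. This shows $D$ is a well-defined operator on the dense domain $L^2_\infty(\Omega T_0)\otimes^{\alg}S_{\Omega T_0,\fin}$.

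For essential self-adjointness I would use the decomposition $D=D_1^N\otimes\id+\id\otimes D_{N+1}^\infty$ together with the uniform tail bound just proved: $\id\otimes D_{N+1}^\infty$ is a bounded symmetric perturbation of $D_1^N\otimes\id$ with norm $\to0$, so it suffices to check that $D_1^N\otimes\id$ is essentially self-adjoint on the natural domain — and that is the standard Euclidean Dirac operator on $\bb{R}^{2N}$ (tensored with the identity on a fixed Hilbert space), which is essentially self-adjoint on Schwartz vectors. Then Kato–Rellich, or more precisely the fact that a uniformly small symmetric bounded perturbation preserves essential self-adjointness, gives the claim; one can also invoke Nelson's analytic-vector theorem using the finite-energy vectors, since each $\sqrt{n}\,dR_{z_n}$ raises energy in a controlled way. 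The statement about $D^2$ follows by the same mechanism: expanding $D^2$ and using $\|\gamma(z_n)\|,\|\gamma(\overline z_n)\|\le\sqrt2$ together with the anticommutation relations for the Clifford part, the cross terms are again dominated by $\sum_{n}\sqrt{n}\,\sigma_n$-type sums, and $D^2$ makes sense on $L^2_\infty(\Omega T_0)\otimes S_{\Omega T_0}$ (note the domain here is larger than the finite-energy one, as stated).

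The main obstacle I anticipate is not the summability bookkeeping — which is forced cleanly by \eqref{eq:behavior of sigma} — but verifying carefully that the operators $dR_{z_n}$ are compatible with the inductive-limit structure, i.e. that $dR_{z_n}(j_m(f))=j_m(dR_{z_n}f)$ is consistent across different $m$ and that the resulting $D$ does not depend on the level at which one computes. This requires checking that $dR_{z_n}$ commutes with the embedding maps $i_k(f)=f\otimes\Xi_{\sigma_{k+1}}$ for $n\le 2k$, which is immediate since $dR_{z_n}$ acts in a slot untouched by $i_k$, but for $n$ near the "boundary" one must be slightly careful about which finite stage sees the derivative. Once this compatibility is pinned down, the analytic estimates above close the argument. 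I would also remark that the same estimate $\sum_{n>N}\sqrt n\,\sigma_n\to 0$ is exactly what will later be needed to run Kucerovsky's criterion (Proposition 2.7) for the Kasparov product, so it is worth stating the bound as a separate lemma.
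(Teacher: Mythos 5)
Your treatment of the well-definedness of $D$ matches the paper: the key estimate $\|dR_{z_n}\Xi_{\sigma_n}\|\leq\sigma_n\|\Xi_{\sigma_n}\|$ via the Fourier transform, the bound $\|\gamma(z_n)\|_{\op}\leq\sqrt{2}$, and the summability condition $\sum\sqrt{n}\,\sigma_n<\infty$ are exactly the paper's ingredients, and the split $D=D_1^N\otimes\id+\id\otimes D_{N+1}^\infty$ is the paper's decomposition. Your remark about the inductive-limit compatibility of the $dR_{z_n}$'s is a reasonable point to flag, though it is not where the work lies.

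There is, however, a genuine gap in your essential self-adjointness argument. You assert that $\id\otimes D_{N+1}^\infty$ is a \emph{bounded} symmetric perturbation with operator norm tending to $0$, and then invoke Kato--Rellich. This is false: the estimate $\sum_{n>N}\sqrt{n}\,\sigma_n\to 0$ only controls $D_{N+1}^\infty$ on vectors whose slots from position $N+1$ onward are exactly the unit vectors $\Xi_{\sigma_n}$. A general vector of the domain $L^2_\infty(\Omega T_0)\otimes^{\alg}S_{\Omega T_0,\fin}$ has the form $f_0\otimes\Xi_{\sigma_{M+1}}\otimes\cdots\otimes s$ with some $M$ possibly much larger than $N$, and then $D_{N+1}^\infty$ contains $D_{N+1}^M$, which genuinely differentiates $f_0$ and is unbounded. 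So for any fixed $N$ the tail is an unbounded operator, and neither Kato--Rellich nor the ``small bounded perturbation preserves essential self-adjointness'' lemma applies. The paper avoids this by proving dense range of $D\pm i$ directly: given a target $f\otimes s$, one first chooses $M$ large enough that \emph{both} $f=f_0\otimes\Xi$ factors at level $M$ \emph{and} the tail sum is small; one then solves $(D_1^M\pm i)\phi_{0,\pm}\approx f_0\otimes s$ at the finite stage $\bb{R}^{2M}$; finally one applies the full $D$ to $\phi_{0,\pm}\otimes(\Xi\otimes 1_f)$, where the tail $D_{M+1}^\infty$ acts only on the $\Xi\otimes 1_f$ factor and the estimate is valid. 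The essential point you are missing is that the cutoff must be chosen \emph{after} the target vector, not before; once you tie $N$ (or $M$) to the vector being approximated, your argument collapses to the paper's dense-range computation. Your alternative suggestion of Nelson's theorem would require similar care, since verifying the analytic-vector estimates for the natural candidates is not automatic.
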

\begin{proof}
Let us make $D$ act on an element $\phi:=\bigl[f\otimes \Xi_{m+1}\otimes \Xi_{m+2}\otimes \cdots\bigr]\otimes s$. 
Since $R_{x_n}$ corresponds to the multiplication by $ix_n$ by the Fourier transform, 
$\|dR_{z_n}\Xi_{\sigma_n}\|\leq \sigma_n\|\Xi_{\sigma_n}\|$. Therefore, by the triangle inequality, and $\|\gamma(z_n)\|_\op\leq\sqrt{2}$,
\begin{align*}
&\sum_{n=M+1}^{\infty} \sqrt{n}\|dR_{z_n}\otimes\gamma(\overline{z_n})\bbbra{\phi}+dR_{\overline{z_n}}\otimes\gamma(z_n)\bbbra{\phi}\|\\
&\;\;\;\;\leq \sum_{n=M+1}^\infty 2\sqrt{2n}\sigma_n<\infty,
\end{align*}
%
thanks to the key condition (\ref{eq:behavior of sigma}). In particular, the infinite sum $D\phi$ converges.

Notice that $D$ is a symmetric operator.
To be self-adjoint, it suffices to check that $D\pm i$ has dense range. Let a non-zero element $f\otimes s\in L^2(\Omega T_0)_\fin\otimes^\alg S_{\Omega T_0\fin}$ be given.
For any positive $\varepsilon$, we would like to find an approximate solution $\phi_\pm\in L^2(\Omega T_0)_\fin\otimes^\alg S_{\Omega T_0\fin}$ satisfying that $\|(D\pm i)\phi_\pm-f\otimes s)\|<2\varepsilon$. 
We may assume that $\varepsilon <\frac{\|f\otimes s\|}{2}$.
To find $\phi_\pm$, find $M\in\bb{N}$ such that $\sum_{n>M}2\sqrt{2n}\sigma_n<\frac{2\varepsilon}{\|f\otimes s\|}$. It is always possible from the Assumption \ref{eq:behavior of sigma}. 
Retaking $M$ greater if necessary, we can assume that $f$ is of the form $f_0\otimes \Xi_{\sigma_{M+1}}\otimes \Xi_{\sigma_{M+2}}\otimes \cdots$ for $f_0\in L^2_\infty(\bb{R}^{2M})$, and $s$ is of the form $\overline{z_{k_1}}\wedge \overline{z_{k_2}}\wedge \cdots \wedge \overline{z_{k_p}}$ for $p\leq M$.
Since $D_1^{M}$ is self-adjoint on $L^2(\bb{R}^{2M})\otimes S_{\bb{R}^{2M}}$, there exists $\phi_{0,\pm}\in L^2(\bb{R}^{2M})_\fin\otimes S_{\bb{R}^{2M}}$ satisfying that $\|(D_1^M\pm i)\phi_{0,\pm}-f_0\otimes s\|<\varepsilon$. 
Since $D_1^M\pm i$ does not reduce the norm, $\|\phi_{0,\pm}\|<\|f_0\otimes s\|-\epsilon<\frac{\|f\otimes s\|}{2}$.


Combining these estimates, we get the following:
\begin{align*}
&\|(D\pm i)\bra{\phi_{0,\pm}\otimes (\Xi\otimes 1_f)}-(f_0\otimes \Xi)\otimes s\| \\
&< \|[(D_1^M\pm i)\phi_{0,\pm}-f_0\otimes s]\otimes (\Xi\otimes 1_f)\|+\|(-1)^{\deg(\phi_{0,\pm})}\phi_{0,\pm}\otimes D_{M+1}^\infty(\Xi\otimes 1_f)\| \\
&< \|(D_1^M\pm i)\phi_{0,\pm}-f_0\otimes s\|\|\Xi\otimes 1_f\|+\|\phi_{0,\pm}\|\|D_{M+1}^\infty(\Xi\otimes 1_f)\| \\
&<2\varepsilon.
\end{align*}



Let us estimate $\|\sqrt{nm}dR_{z_n}dR_{z_m}f\otimes \gamma(\overline{z_n})\gamma(\overline{z_m})s\|$'s. The rest part (e.g. $\|\sqrt{nm}dR_{z_n}dR_{\overline{z_m}}f\otimes \gamma(\overline{z_n})\gamma(z_m)s\|$'s) can be estimated by the same method. Since $\gamma(z_n)$'s are bounded operators, and the operator norm is $\sqrt{2}$, we may ignore $S$-part.
So we estimate $\|dR_{z_n}dR_{z_m}f\|$.

By definition of $L^2_\infty(\Omega T_0)$, $f$ can be written as $f_0\otimes \Xi$ for $f_0\in C_c^\infty(\bb{R}^{2M})$. Let $\|\cdot\|_{L^2_k}$ be the $k$-th Sobolev norm on $\bb{R}^{2M}$.
If $n$, $m>M$, $\|\sqrt{nm}dR_{z_n}dR_{z_m}f\|<\sqrt{nm}\sigma_n\sigma_m\|f_0\|_{L^2}$. 
If $n>M$ and $m\leq M$, $\|\sqrt{nm}dR_{z_n}dR_{z_m}f\|<\sqrt{nm}\sigma_n\|f_0\|_{L^2_1}$. If $n,m\leq M$, $\|\sqrt{nm}dR_{z_n}dR_{z_m}f\|\leq \sqrt{nm}\|f_0\|_{L^2_2}$. 
Since
\begin{align*}
&\sum_{n,m} \|\sqrt{nm}dR_{z_n}dR_{z_m}f\otimes \gamma(\overline{z_n})\gamma(\overline{z_m})s\|\\
&\leq 2\bra{
\sum_{n,m>M}\sqrt{nm}\sigma_n\sigma_m\|f_0\|_{L^2}
+2\sum_{n>M,m\leq M}\sqrt{nm}\sigma_n\|f_0\|_{L^2_1}
+\sum_{n,m\leq M}\sqrt{nm}\|f_0\|_{L^2_2}
},
\end{align*}
the infinite sum converges, thanks to Assumption \ref{eq:behavior of sigma}. Hence $D^2(f\otimes s)$ makes sense for $f\otimes s\in L^2_\infty(\Omega T_0)\otimes^\alg S_{\Omega T_0,\fin}$.


\end{proof}

Using the definition of $L^2(\Omega T_0)$, we define the $C^*$-algebra $\Omega T_0\ltimes C_0(\Omega T_0)$, by a similar technique in \cite{T}. For the details of classical crossed product algebras, see \cite{Wil} for example.

\begin{dfn}
Noticing that $\bb{R}^{2n}\ltimes C_0(\bb{R}^{2n})\cong \ca{K}\bra{L^2\bra{\bb{R}^{2n}}}$, we define the $C^*$-algebra $\Omega T_0\ltimes C_0(\Omega T_0)$ by the inductive limit of $\cdots \to \ca{K}\bra{L^2\bra{\bb{R}^{2n}}}\to \ca{K}\bra{L^2\bra{\bb{R}^{2n+2}}}\to\cdots$ by the $*$-homomorphism 
$$k\mapsto k\otimes P_{\Xi_{\sigma_{n+1}}},$$
where $P_{\Xi_{\sigma_{n+1}}}$ is the rank one projection onto $\bb{C}\Xi_{\sigma_{n+1}}$.

$\ca{K}\bra{L^2\bra{\bb{R}^{2n}}}$ contains a dense $*$-subalgebra $\ca{K}\bra{L^2\bra{\bb{R}^{2n}}}_\fin:=L^2_\infty(\bb{R}^{2n})\otimes^\alg L^2_\infty(\bb{R}^{2n})^*$.
The above $*$-homomorphism defines a $*$-homomorphism from $\ca{K}\bra{L^2\bra{\bb{R}^{2n}}}_\fin$ to $\ca{K}\bra{L^2\bra{\bb{R}^{2n+2}}}_\fin$, hence we can define a dense $*$-subalgebra $\bra{\Omega T_0\ltimes C_0(\Omega T_0)}_\fin$ by the algebraic inductive limit.

\end{dfn}

\subsection{The construction of $j^{\Omega T_0}_\tau(x)\in \Psi(\Omega T_0\ltimes C_0(\Omega T_0),\Omega T_0\ltimes_\tau \bb{C})$}
Combining the above two subsections, we will construct the main objects, and prove the main theorem.

\begin{thm}\label{j-hom for Omega T0}
The closure of the operator 
$\widetilde{\Dirac}_R:=D\otimes_2\id+\id\otimes_1\cancel{\partial}_L$ acting on $L^2(\Omega T_0)_\fin\otimes^\alg S_{\Omega T_0,\fin}\otimes^\alg L^2(\Omega T_0,\tau)_\fin$
is self-adjoint and regular, and
$$(L^2(\Omega T_0)\otimes S_{\Omega T_0}\otimes (\Omega T_0\ltimes_\tau \bb{C}),\widetilde{\Dirac}_R)\in\Psi(\Omega T_0\ltimes C_0(\Omega T_0),\Omega T_0\ltimes_\tau \bb{C}),$$
that is, $[\widetilde{\Dirac}_R,a]\in \ca{L}_{\Omega T_0\ltimes_\tau \bb{C}}(L^2(\Omega T_0)\otimes S_{\Omega T_0}\otimes (\Omega T_0\ltimes_\tau \bb{C}))$ for $a\in (\Omega T_0\ltimes C_0(\Omega T_0))_\fin$, and $(1+\widetilde{\Dirac}_R^2)^{-1}a\in\ca{K}_{\Omega T_0\ltimes_\tau \bb{C}}(L^2(\Omega T_0)\otimes S_{\Omega T_0}\otimes (\Omega T_0\ltimes_\tau \bb{C}))$ for all $a\in \Omega T_0\ltimes C_0(\Omega T_0)$.
\end{thm}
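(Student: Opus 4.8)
The plan is to verify the three defining properties of an unbounded Kasparov module for the pair $(L^2(\Omega T_0)\otimes S_{\Omega T_0}\otimes(\Omega T_0\ltimes_\tau\bb{C}),\widetilde{\Dirac}_R)$, reducing each to the finite-dimensional model where possible and controlling the tails by the key summability condition \eqref{eq:behavior of sigma}. Throughout I would exploit the decomposition $\widetilde{\Dirac}_R=D_1^N\otimes\id+\id\otimes_1(\Dirac_L)_{\leq N}+(\text{tail terms})$, where the first piece is a genuine finite-dimensional Dirac operator tensored with identities, the middle piece involves only the first $N$ creation/annihilation operators on the $L^2(\bb{R}^\infty)^*$-factor, and the tail is small in a precise quantitative sense.

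\textbf{Self-adjointness and regularity.} First I would show $\widetilde{\Dirac}_R$ is symmetric on the algebraic core (clear from the definitions, since $D$ is symmetric by Proposition \ref{convergence} and $\Dirac_L$ is formally symmetric). Then, following the strategy already used in Proposition \ref{convergence} and in the treatment of $\Dirac_R$, I would establish essential self-adjointness by showing $\widetilde{\Dirac}_R\pm i$ has dense range: given $f\otimes s\otimes w$ in the core and $\varepsilon>0$, I split off a finite number $M$ of modes so that the tail of $D$ (estimated by $\sum_{n>M}2\sqrt{2n}\sigma_n$) and the tail contribution of $\Dirac_L$ are both below $\varepsilon$ in operator norm on the relevant vectors; on the finite part $D_1^M\otimes\id+\id\otimes(\Dirac_L)_{\leq M}$ is self-adjoint (finite sum of commuting-type pieces on a tensor product of Hilbert spaces, with the $\Dirac_L$-part acting fibrewise over $\Omega T_0\ltimes_\tau\bb{C}$ — here I may invoke Proposition \ref{Kuc} or a direct argument that a sum of a self-adjoint operator and a bounded-tail perturbation is self-adjoint and regular, cf. Kucerovsky). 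Regularity is then handled exactly as in Lemma 2.8: the restriction of $\widetilde{\Dirac}_R$ to $\dom\otimes$(core) contains $D\otimes\id$ whose $\id+(\cdot)^2$ already has dense range, so $\id+\widetilde{\Dirac}_R^2$ does too.

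\textbf{Bounded commutators.} For $a\in(\Omega T_0\ltimes C_0(\Omega T_0))_\fin$, which by construction lies in some $\ca{K}(L^2(\bb{R}^{2N}))_\fin\otimes P_\Xi$, the commutator $[\widetilde{\Dirac}_R,a]$ splits as $[D,a]\otimes_2\id$ plus a term from $\id\otimes_1\Dirac_L$ which vanishes since $a$ acts only on the $L^2(\Omega T_0)$-factor and $\Dirac_L$ acts only on $S_{\Omega T_0}\otimes(\Omega T_0\ltimes_\tau\bb{C})$. So everything reduces to showing $[D,a]$ is bounded on $L^2(\Omega T_0)$. Writing $D=D_1^N\otimes\id+\id\otimes D_{N+1}^\infty$ against the decomposition adapted to $a$: the second summand commutes with $a$ (it acts on the orthogonal modes where $a$ is $P_\Xi$), and $[D_1^N\otimes\id,a]=[D_1^N,\cdot]$ applied to a finite-rank-type element of $\ca{K}(L^2(\bb{R}^{2N}))_\fin$ built from Schwartz-type data; by Remark \ref{aD is bounded} (the observation that $k\circ dR$ is bounded for smooth $k$) this commutator is a bounded operator. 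Hence $[\widetilde{\Dirac}_R,a]\in\ca{L}_{\Omega T_0\ltimes_\tau\bb{C}}$.

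\textbf{Compactness of the resolvent times $a$.} For $a\in\Omega T_0\ltimes C_0(\Omega T_0)$ it suffices, by density and the $C^*$-identity, to treat $a$ in the dense subalgebra, i.e.\ $a=a_0\otimes P_\Xi$ with $a_0\in\ca{K}(L^2(\bb{R}^{2N}))$. The natural approach is to compare $(1+\widetilde{\Dirac}_R^2)^{-1}a$ with $(1+(D_1^N)^2\otimes\id+\cdots)^{-1}a$: on the finite modes $(1+(D_1^N)^2)^{-1}a_0$ is compact on $L^2(\bb{R}^{2N})$ (standard elliptic estimate, as $a_0$ is already compact), on the $S_{\Omega T_0}\otimes(\Omega T_0\ltimes_\tau\bb{C})$-factor $(1+(\Dirac_L)^2)^{-1}$ is $\Omega T_0\ltimes_\tau\bb{C}$-compact because $\Dirac_L^2$ has, fibrewise, discrete spectrum growing to infinity (Weitzenb\"ock-type identity $\Dirac_L^2\sim N+d\rho^*(d)/i$), and on the remaining orthogonal $L^2(\bb{R}^\infty)$-modes $a$ is the rank-one projection $P_\Xi$. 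I would then use the tail estimates from Proposition \ref{convergence} to show $\widetilde{\Dirac}_R^2$ differs from the ``decoupled'' operator by a term that is relatively bounded with small bound, so that a resolvent identity transfers compactness. Concretely, $(1+\widetilde{\Dirac}_R^2)^{-1}a$ gets approximated in norm by compact operators of the form $($compact on $L^2(\bb{R}^{2N})) \otimes (\Omega T_0\ltimes_\tau\bb{C}$-compact on the rest$)\otimes P_\Xi$, which lies in $\ca{K}_{\Omega T_0\ltimes_\tau\bb{C}}$.

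\textbf{Main obstacle.} The genuinely delicate step is the last one: unlike in the finite-dimensional case (Lemma 2.8, where $(1+\widetilde{D}^2)^{-1}a\in C_c(\Gamma,\ca{K}_B(E))$ immediately), here $\widetilde{\Dirac}_R$ genuinely couples infinitely many modes through the operator $D$ (whose partial sums only converge because of \eqref{eq:behavior of sigma}), and the factor $\Omega T_0\ltimes_\tau\bb{C}$ is itself $\ca{K}(L^2(\bb{R}^\infty)^*)$, so ``$\Omega T_0\ltimes_\tau\bb{C}$-compactness'' is a genuine Hilbert-module compactness condition rather than finite-rank-over-a-function-algebra. Making the resolvent-comparison rigorous — showing the cross terms between $D$ on one factor and $\Dirac_L$ on the other, together with the infinite tail of $D$, constitute a perturbation small enough (in the appropriate relative-bound sense, using Lemma \ref{estimate about PER} and the $\sigma_k$-estimates) that compactness survives — is where the quantitative input of the whole construction is really needed, and it is the step I would expect to require the most care.
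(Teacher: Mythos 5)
Your overall plan tracks the paper's own proof closely: self-adjointness and regularity are verified by checking that $\widetilde{\Dirac}_R\pm i$ has dense range after splitting off finitely many modes, the commutator bound is reduced to $[D,a]$, and compactness of the resolvent is obtained by comparison with a decoupled operator via a resolvent identity, controlling the difference with the Weitzenb\"ock-type formula and Lemma \ref{estimate about PER}. The only structural difference is your choice of decoupled comparison operator in the last step: you retain the finite Dirac piece $D_1^N$, whereas the paper takes $\widetilde{\cancel{\partial}}_0:=\id\otimes_1\Dirac_L$ with no $D$ at all, so that the resolvent factors cleanly as $a\otimes_1(1+\Dirac_L^2)^{-1}$. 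Either choice works.

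There is, however, a genuine error in your bounded-commutator step. After writing $D=D_1^N\otimes\id+\id\otimes D_{N+1}^\infty$ and $a=a_1\otimes P_\Xi$, you assert that the second summand commutes with $a$. It does not: $D_{N+1}^\infty$ and $P_\Xi$ both act nontrivially on the high modes, and a Dirac operator does not commute with a rank-one projection. Applying $D_{N+1}^\infty$ after $P_\Xi$ yields vectors proportional to $D_{N+1}^\infty(\Xi\otimes\,\cdot\,)$, whereas applying $P_\Xi$ after $D_{N+1}^\infty$ projects back onto $\bb{C}\Xi$; these are different operators because $dR_{z_n}\Xi_{\sigma_n}\neq 0$ for every $n$ (it is only small, of norm $\leq\sigma_n$). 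What the proof actually needs is that $[D_{N+1}^\infty,P_\Xi\otimes\id]$ is a \emph{bounded} operator, with norm controlled by $\sum_{n>N}2\sqrt{2n}\sigma_n$, and this quantity is finite precisely because of the key summability condition (\ref{eq:behavior of sigma}). The paper makes this explicit: Lemma \ref{first order} bounds $D\circ a$ and $a\circ D$ separately and invokes both Remark \ref{aD is bounded} and condition (\ref{eq:behavior of sigma}). By declaring the tail commutator zero you discard the quantitative input that drives the infinite-dimensional estimate; without it the infinite sum defining the tail of $[D,a]$ is not even obviously convergent. The gap is repairable --- replace ``commutes'' by ``has commutator of norm $O\bigl(\sum_{n>N}\sqrt{n}\sigma_n\bigr)$'' --- but as written the claim is false.
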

In this subsection, we prove the above theorem, dividing three steps: $(a)$ $\widetilde{\Dirac}_R$ is self-adjoint and regular, $(b)$ $[\widetilde{\cancel{\partial}_R},a\otimes_1 \id]$ is bounded for all $a\in (\Omega T_0\ltimes C_0(\Omega T_0))_\fin$, and $(c)$ $(1+\widetilde{\cancel{\partial}_R}^2)^{-1}a\otimes_1 \id$ is compact for all $a\in \Omega T_0\ltimes C_0(\Omega T_0)$.

\begin{lem}
$\widetilde{\Dirac}_R$ is self-adjoint and regular.
\end{lem}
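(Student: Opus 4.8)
The plan is to verify self-adjointness and regularity of $\widetilde{\Dirac}_R$ by the standard criterion recalled in the second Remark after the definition of unbounded Kasparov modules: it suffices to show that $\widetilde{\Dirac}_R \pm i$ have dense range on the algebraic core $L^2(\Omega T_0)_\fin\otimes^\alg S_{\Omega T_0,\fin}\otimes^\alg L^2(\Omega T_0,\tau)_\fin$, together with symmetry, which is immediate from the explicit skew/self-adjointness of the building blocks $dR_{z_n}$, $d\rho^*(z_n)$ and $\gamma(z_n)$. The key structural observation is that $\widetilde{\Dirac}_R = D\otimes_2\id + \id\otimes_1\Dirac_L$, and that both summands can be simultaneously ``truncated'': using the isomorphisms $L^2(\Omega T_0)\cong L^2(\bb{R}^{2N})\otimes L^2(\bb{R}^{2N\perp})$ and the corresponding splittings of $S_{\Omega T_0}$ and of $L^2(\Omega T_0,\tau)$, one writes $D = D_1^N\otimes\id + \id\otimes D_{N+1}^\infty$ and $\Dirac_L = (\Dirac_L)_1^N\otimes\id + \id\otimes(\Dirac_L)_{N+1}^\infty$ as in Proposition \ref{convergence}.

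First I would record that on the core the operator is a genuine (strongly convergent on each vector) operator, not a formal sum: for the $D$-part this is exactly the convergence estimate in the proof of Proposition \ref{convergence} (using $\|dR_{z_n}\Xi_{\sigma_n}\|\le\sigma_n$ and the key condition $\sum\sqrt{n}\sigma_n<\infty$), and for the $\Dirac_L$-part this is the finite-energy argument for $\Dirac_R$ together with Lemma \ref{estimate about PER}. Then I would prove density of the range of $\widetilde{\Dirac}_R\pm i$ by the following approximation scheme, mirroring the self-adjointness argument already given for $D$ alone in Proposition \ref{convergence}. Fix $f\otimes s\otimes w$ in the core and $\varepsilon>0$. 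Choose $N$ large enough that the ``tail'' $D_{N+1}^\infty\otimes_2\id + \id\otimes_1(\Dirac_L)_{N+1}^\infty$ has operator norm $<\varepsilon$ on the relevant tail subspace $\Xi\otimes 1_f\otimes(\text{lowest weight vector})$ --- for the $D$-tail this is the estimate $\sum_{n>N}2\sqrt{2n}\sigma_n<\varepsilon$, and for the $\Dirac_L$-tail one uses that on the lowest-energy vector of $L^2(\bb{R}^\infty)_\fin^*$ the operators $d\rho^*(z_n)$ annihilate it while $d\rho^*(\overline{z_n})$ raises energy, so after retaking $N$ large the contribution is controlled. On the finite part $L^2(\bb{R}^{2N})\otimes S_{\bb{R}^{2N}}\otimes(\text{finite factor of }\Omega T_0\ltimes_\tau\bb{C})$ the truncated operator $(D_1^N + \Dirac_{L,1}^N)\otimes\id$ is essentially self-adjoint (it is $\Dirac_R$ restricted to a finite number of variables, which is the classical Dirac-type operator whose self-adjointness is known), so one solves the equation approximately there, then tensors with the fixed tail vector; the $\pm i$ in front prevents norm reduction so the tensored vector has controlled norm and the total error is $O(\varepsilon)$.

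The main obstacle I anticipate is the bookkeeping of the \emph{interaction} between the two summands on the truncation: one must check that the chosen $N$ can be taken to work simultaneously for the $D$-tail and the $\Dirac_L$-tail, i.e. that the cross terms in $\widetilde{\Dirac}_R = D\otimes_2\id + \id\otimes_1\Dirac_L$ genuinely commute after truncation so that $(\widetilde{\Dirac}_R)_{>N}$ acts only on the tail Hilbert space and the truncated approximate solution is not disturbed. This is where the factorization $\widetilde{\Dirac}_R = (D_1^N + \Dirac_{L,1}^N)\otimes\id + \id\otimes(D_{N+1}^\infty + \Dirac_{L,N+1}^\infty)$ must be made precise, taking care of the Koszul signs from the graded tensor product (the $(-1)^{\deg}$ factors as in Remarks \ref{ass. of grp}(2)); the estimates themselves are routine once this algebraic decomposition is in place. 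I would therefore structure the proof as: (i) the decomposition identity; (ii) convergence of $\widetilde{\Dirac}_R$ and $\widetilde{\Dirac}_R^2$ on the core, quoting Proposition \ref{convergence} and Lemma \ref{estimate about PER}; (iii) symmetry; (iv) the $\pm i$-range density via the finite/tail split above, yielding self-adjointness and regularity by Kucerovsky's criterion recalled in the remark following the unbounded-module definition.
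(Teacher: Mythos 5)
Your proposal is correct and follows essentially the same route as the paper: split the Hilbert module and the operator into a finite-mode block tensored with a tail block, observe that the finite block is the $j$-homomorphism image of a classical Dirac operator on $\bb{R}^{2M}$ (hence self-adjoint and regular), control the tail via the key condition $\sum\sqrt{n}\sigma_n<\infty$ and the positive-energy estimate, and conclude density of the range of $\widetilde{\Dirac}_R\pm i$. Your regrouping $\widetilde{\Dirac}_R=(D_1^N+\Dirac_{L,1}^N)\otimes\id+\id\otimes(D_{N+1}^\infty+\Dirac_{L,N+1}^\infty)$ is exactly the decomposition $\widetilde{\Dirac}_{R1}\otimes\id+\id\otimes\widetilde{\Dirac}_{R2}$ used (more tersely) in the paper.
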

\begin{proof}
It is sufficient to check that $\widetilde{\Dirac}_R\pm i$ has dense range. 
We use a similar argument of the proof of Proposition \ref{convergence}. Let us consider the isomorphism
\begin{align*}
&L^2(\Omega T_0)\otimes S_{\Omega T_0}\otimes (\Omega T_0\ltimes_\tau \bb{C}) \\
&\cong 
L^2(\bb{R}^{2M})\otimes S_{\bb{R}^{2M}}\otimes (\bb{R}^{2M}\ltimes_\tau \bb{C})
\bigotimes L^2(\bb{R}^{2M\perp})\otimes S_{\bb{R}^{2M\perp}}\otimes (\bb{R}^{2M\perp}\ltimes_\tau \bb{C}).
\end{align*}
Under this isomorphism, we can divide $\widetilde{\Dirac}_R$ into two parts $\widetilde{\Dirac}_{R1}\otimes\id+\id\otimes \widetilde{\Dirac}_{R2}$. One can prove the statement if he notices that the first part is self-adjoint, since it can be written as $\widetilde{D}$ for some $D$ for a finite-dimensional manifold $\bb{R}^{2M}$, and the second part is ``small'' by a similar argument of Proposition \ref{convergence}.

\end{proof}
\begin{lem}\label{first order}
$[\widetilde{\cancel{\partial}_R},a\otimes_1 \id]$ is bounded for all $a\in (\Omega T_0\ltimes C_0(\Omega T_0))_\fin$.
\end{lem}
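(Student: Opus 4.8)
The plan is to split $\widetilde{\cancel{\partial}_R}$ into its two summands $D\otimes_2\id$ and $\id\otimes_1\cancel{\partial}_L$ and reduce everything to the boundedness of $[D,a]$ on the $L^2(\Omega T_0)\otimes S_{\Omega T_0}$-factor. First I would note that $\id\otimes_1\cancel{\partial}_L$ acts only on the $S_{\Omega T_0}\otimes(\Omega T_0\ltimes_\tau\bb{C})$-factor while $a\otimes_1\id$ acts only on $L^2(\Omega T_0)$; since $a$ is even (it is the identity on the spinor factor, which carries the grading), these two operators commute, so
$$[\widetilde{\cancel{\partial}_R},\,a\otimes_1\id]=[D\otimes_2\id,\,a\otimes_1\id]=[D,a]\otimes_2\id,$$
where on the right $a$ is understood to act on $L^2(\Omega T_0)$ and trivially on $S_{\Omega T_0}$. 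It therefore suffices to show that $[D,a]$ extends to a bounded operator on $L^2_\infty(\Omega T_0)\otimes^\alg S_{\Omega T_0,\fin}$ for every $a\in(\Omega T_0\ltimes C_0(\Omega T_0))_\fin$.

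Next I would unwind the inductive-limit structure of the algebra. By construction $a$ lies in the image of $\ca{K}(L^2(\bb{R}^{2M}))_\fin=L^2_\infty(\bb{R}^{2M})\otimes^\alg L^2_\infty(\bb{R}^{2M})^*$ for some $M$, and under the identification $L^2(\Omega T_0)\cong L^2(\bb{R}^{2M})\otimes L^2(\bb{R}^{2M\perp})$ it acts as $k\otimes P_\Xi$, where $k=\sum_i\phi_i\otimes\psi_i^*$ is a finite ``smooth Schatten'' operator with $\phi_i,\psi_i\in L^2_\infty(\bb{R}^{2M})$, and $P_\Xi:=\bigotimes_{n>M}P_{\Xi_{\sigma_n}}$. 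Writing $D=D_1^M\otimes\id+\id\otimes D_{M+1}^\infty$ we get $[D,a]=[D_1^M,a]+[D_{M+1}^\infty,a]$. The first term is the \emph{finite} sum $\sum_{n=1}^M\sqrt{n}\big([dR_{z_n},k]\otimes\gamma(\overline{z_n})+[dR_{\overline{z_n}},k]\otimes\gamma(z_n)\big)\otimes P_\Xi$; here, for $n\leq M$, $dR_{z_n}$ is a constant-coefficient first-order operator on the $L^2(\bb{R}^{2M})$ on which $k$ acts, so $[dR_{z_n},k]$ is bounded because $dR_{z_n}k$ and $kdR_{z_n}$ are again smooth Schatten operators (this is exactly the observation recorded in Remark \ref{aD is bounded}); since the Clifford multiplications are bounded, $[D_1^M,a]$ is bounded.

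The main work is the tail term $[D_{M+1}^\infty,a]$, and here the estimate is essentially a rerun of the convergence argument in Proposition \ref{convergence}. For $n>M$ the operator $dR_{z_n}$ acts on the $n$-th $\bb{R}^2$-block of $L^2(\bb{R}^{2M\perp})$, commuting with $k$ and with $P_{\Xi_{\sigma_m}}$ for $m\neq n$, so $[dR_{z_n},a]=k\otimes[dR_{z_n},P_{\Xi_{\sigma_n}}]\otimes\bigotimes_{m>M,\,m\neq n}P_{\Xi_{\sigma_m}}$. Writing $dR_{z_n}P_{\Xi_{\sigma_n}}=\theta_{dR_{z_n}\Xi_{\sigma_n},\,\Xi_{\sigma_n}}$ and $P_{\Xi_{\sigma_n}}dR_{z_n}=\theta_{\Xi_{\sigma_n},\,dR_{z_n}^*\Xi_{\sigma_n}}$, the Fourier-multiplier bound $\|dR_{z_n}\Xi_{\sigma_n}\|\leq\sigma_n$ used in Proposition \ref{convergence} (and its analogue for $dR_{z_n}^*$) gives $\|[dR_{z_n},P_{\Xi_{\sigma_n}}]\|\leq2\sigma_n$. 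Using $\|\gamma(z_n)\|_\op\leq\sqrt{2}$ and the triangle inequality, the norm of $[D_{M+1}^\infty,a]$ is then at most $4\sqrt{2}\,\|k\|\sum_{n>M}\sqrt{n}\,\sigma_n$, which is finite by the key condition (\ref{eq:behavior of sigma}). Hence $[D,a]$, and therefore $[\widetilde{\cancel{\partial}_R},a\otimes_1\id]$, is bounded. The one remaining bookkeeping point --- that $a\otimes_1\id$ preserves the dense domain $L^2_\infty(\Omega T_0)\otimes^\alg S_{\Omega T_0,\fin}\otimes^\alg L^2(\Omega T_0,\tau)_\fin$, so that the commutator is defined there before extending by continuity --- holds because $\Xi_{\sigma_n}\in L^2_\infty(\bb{R}^2)$ and $k$ has range in $L^2_\infty(\bb{R}^{2M})$. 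The only genuine obstacle is this tail estimate, which, as noted, duplicates an argument already at hand.
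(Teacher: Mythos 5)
Your proof is correct and follows essentially the same approach as the paper: reduce to $[D,a]$ on the $L^2(\Omega T_0)\otimes S_{\Omega T_0}$ factor, put $a$ in smooth Schatten form $k\otimes P_\Xi$, and control the tail of the infinite sum via the key condition $\sum_k\sqrt{k}\sigma_k<\infty$. The only organizational difference is that you estimate the commutator by splitting $D=D_1^M+D_{M+1}^\infty$ and bounding each piece of $[D,a]$ directly, whereas the paper shows $D\circ(a\otimes\id)$ and $(a\otimes\id)\circ D$ are each separately bounded (the first because $D(\phi\otimes s)$ converges for smooth $\phi$, the second via Remark \ref{aD is bounded}); both routes rest on the same facts and your version is, if anything, a bit more explicit about where condition (\ref{eq:behavior of sigma}) enters.
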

\begin{proof}

Since $a\otimes_1 \id$ commutes with $\id\otimes_1\Dirac_R$, it is sufficient to deal with $[a\otimes \id,D]$. Hence we can focus on $L^2(\Omega T_0)\otimes S_{\Omega T_0}$-part, and we can study in the Hilbert space language, not in the Hilbert module's.
We may assume that $a$ is a Schatten form $\phi\otimes \psi^*$, and $\phi$ ($\psi$) is of the form $\phi_1\otimes \Xi_{\sigma_{M+1}}\otimes \Xi_{\sigma_{M+2}}\otimes\cdots$ ($\psi_1\otimes \Xi_{\sigma_{M+1}}\otimes \Xi_{\sigma_{M+2}}\otimes\cdots$ respectively), where $\phi_1,\psi_1\in L^2(\bb{R}^{2M})$. Then $a$ can be written as $(\phi_1\otimes\psi_1^*) \bigotimes P_{\Xi_{\sigma_{M+1}}\otimes \Xi_{\sigma_{M+2}}\otimes\cdots}$, and we right it as $a_1\otimes P$ simply.

We check that the both of $D\circ a\otimes \id$ and $a\otimes \id \circ D$ are bounded. 
The first boundedness can be verified as follows: the norm of
$$D\circ (a_1\otimes P)\otimes \id(f\otimes s)=D(\phi\otimes s)\innpro{\psi}{f}{},$$
is controlled by $\|f\otimes s\|$.

For the second one, notice Remark \ref{aD is bounded} and the condition \ref{eq:behavior of sigma}.
\end{proof}

For the last step, we recall several properties about Hilbert modules.

\begin{lem}\label{B-cpt op}
As is well known, $\ca{K}_{\Omega T_0\ltimes_\tau \bb{C}}(\Omega T_0\ltimes_\tau \bb{C})\cong \Omega T_0\ltimes_\tau \bb{C}$. Moreover, for $v\otimes w\in L^2(\bb{R}^\infty)^*\otimes L^2(\bb{R}^\infty)$ and $k\in \ca{K}_{\Omega T_0\ltimes_\tau \bb{C}}(\Omega T_0\ltimes_\tau \bb{C})\cong \Omega T_0\ltimes_\tau \bb{C}$, $k(v\otimes w)=k(v)\otimes w$. Roughly speaking, $\Omega T_0\ltimes_\tau \bb{C}$-compact operator on $\Omega T_0\ltimes_\tau \bb{C}$ is of the following form:
\begin{center}
$\bb{C}$-compact operator $\otimes$ $\id$.
\end{center}
\end{lem}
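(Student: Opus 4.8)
The plan is to deduce the lemma from the standard identification $\ca{K}_B(B)\cong B$, valid for any $C^*$-algebra $B$ regarded as a right Hilbert module over itself, and then to make this identification explicit in the case $B=\Omega T_0\ltimes_\tau\bb{C}=\ca{K}(L^2(\bb{R}^\infty)^*)$.

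First I would recall the general fact. Left multiplication gives an injective $*$-homomorphism $L\colon B\to \ca{L}_B(B)$, $L_a(b):=ab$, each $L_a$ being adjointable with $L_a^*=L_{a^*}$. For $b_1,b_2\in B$ the rank-one operator $\theta_{b_1,b_2}$ on the module $B$ sends $b\mapsto b_1\innpro{b_2}{b}{B}=b_1b_2^*b=L_{b_1b_2^*}(b)$, so $\ca{K}_B(B)=\overline{\mathrm{span}}\{\theta_{b_1,b_2}\}\subseteq L(B)$, the latter being closed as the image of a $C^*$-algebra. Conversely, taking a self-adjoint approximate unit $(u_\lambda)$ of $B$ one has $\theta_{b_1,u_\lambda}=L_{b_1u_\lambda}\to L_{b_1}$, so $L(B)\subseteq\ca{K}_B(B)$. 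Hence $L$ is an isomorphism $B\xrightarrow{\sim}\ca{K}_B(B)$, and every $B$-compact operator on $B$ acts by left multiplication by an element of $B$. Applying this with $B=\Omega T_0\ltimes_\tau\bb{C}$ gives the first assertion.

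Next I would unwind the identification of $\Omega T_0\ltimes_\tau\bb{C}=\ca{K}(L^2(\bb{R}^\infty)^*)$ with the completion of $L^2(\bb{R}^\infty)^*_\fin\otimes^\alg L^2(\bb{R}^\infty)_\fin$: an elementary tensor $v\otimes w$, with $v\in L^2(\bb{R}^\infty)^*$ and $w\in L^2(\bb{R}^\infty)\cong (L^2(\bb{R}^\infty)^*)^*$, corresponds to the rank-one operator on the Hilbert space $L^2(\bb{R}^\infty)^*$ with range $\bb{C}v$ whose ``bra'' is the functional $w$ under the canonical reflexive pairing. Under this dictionary the left multiplication of $k\in\ca{K}(L^2(\bb{R}^\infty)^*)$ on the module is exactly composition of operators, and $k\circ(v\otimes w)=k(v)\otimes w$ since the composition only touches the range vector. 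This is precisely the asserted formula $k(v\otimes w)=k(v)\otimes w$, and it exhibits $k$, in the tensor-product picture, as $k\otimes\id$ with $k$ a $\bb{C}$-compact operator on the first leg $L^2(\bb{R}^\infty)^*$ and the identity on the second leg $L^2(\bb{R}^\infty)$.

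I do not expect a genuine obstacle: the statement is essentially a translation of a well-known fact. The only point requiring care is bookkeeping --- checking that the Hilbert $\Omega T_0\ltimes_\tau\bb{C}$-module structure on $\Omega T_0\ltimes_\tau\bb{C}$ (inner product $\innpro{a_1}{a_2}{}=a_1^*a_2$, right action by multiplication) is compatible with the tensor-product description and with the conventions fixed for $L^2(\bb{R}^\infty)^*$ and its dual pairing, so that the Schatten/rank-one forms on the two sides match. Once the conventions are pinned down this is routine, and the lemma follows.
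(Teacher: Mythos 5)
The paper states this lemma as "well known" and gives no proof of its own, so there is nothing to compare against beyond the intended standard argument, which your proposal reproduces correctly: $\ca{K}_B(B)\cong B$ via left multiplication (rank-ones $\theta_{b_1,b_2}=L_{b_1 b_2^*}$ plus an approximate-unit argument), and then the observation that for $B=\ca{K}(L^2(\bb{R}^\infty)^*)$ left multiplication is composition of operators, so on a rank-one tensor $v\otimes w$ (with $v$ the range vector in $L^2(\bb{R}^\infty)^*$ and $w$ the functional in $L^2(\bb{R}^\infty)\cong(L^2(\bb{R}^\infty)^*)^*$ under the bilinear reflexive pairing) one gets $k\circ(v\otimes w)=k(v)\otimes w$. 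Your bookkeeping of the pairing conventions is sound, and the proof is complete.
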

\begin{rmk}
In the equation $k(v\otimes w)=k(v)\otimes w$, $k$ is regarded as an element of $\ca{K}_{\Omega T_0\ltimes_\tau \bb{C}}(\Omega T_0\ltimes_\tau \bb{C})$ in the LHS, and $k$ is regarded as a compact operator acting on $L^2(\bb{R}^\infty)^*$ in the RHS by the isomorphism $\Omega T_0\ltimes_\tau C_0(\Omega T_0)\cong \ca{K}(L^2(\bb{R}^\infty)^*)$.
\end{rmk}

Let us move to the last step.
\begin{lem}
$(1+\widetilde{\cancel{\partial}_R}^2)^{-1}a\otimes_1 \id$ is compact for all $a\in \Omega T_0\ltimes C_0(\Omega T_0)$.

\end{lem}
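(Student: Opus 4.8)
The plan is to reduce everything to a computation with a single ``doubled'' Dirac operator on a Hilbert space. Since $a\mapsto(1+\widetilde{\Dirac}_R^2)^{-1}(a\otimes_1\id)$ is linear and contractive (the self-adjoint operator $\widetilde{\Dirac}_R$ satisfies $1+\widetilde{\Dirac}_R^2\geq1$) and the $\Omega T_0\ltimes_\tau\bb{C}$-compact operators form a closed set, it suffices to treat $a$ in the dense subalgebra $(\Omega T_0\ltimes C_0(\Omega T_0))_\fin$, and then, decomposing further by linearity, a single Schatten form $a=a_1\otimes P_\Xi$ with $a_1=\phi_1\otimes\psi_1^*$, $\phi_1,\psi_1\in L^2_\infty(\bb{R}^{2M})$, and $P_\Xi$ the rank-one projection onto $\bb{C}\Xi$, $\Xi=\Xi_{\sigma_{M+1}}\otimes\Xi_{\sigma_{M+2}}\otimes\cdots$; such an $a$ is a finite-rank operator on $L^2(\Omega T_0)$.

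Next I would apply Lemma \ref{B-cpt op}: unfolding $\Omega T_0\ltimes_\tau\bb{C}=\ca{K}(L^2(\bb{R}^\infty)^*)$, the operator $(1+\widetilde{\Dirac}_R^2)^{-1}(a\otimes_1\id)$ acts on the module only through the Hilbert space $\ca{H}:=L^2(\Omega T_0)\otimes S_{\Omega T_0}\otimes L^2(\bb{R}^\infty)^*$, and it is $\Omega T_0\ltimes_\tau\bb{C}$-compact if and only if the corresponding operator $(1+\widehat{\Dirac}^2)^{-1}\widehat a$ on $\ca{H}$ is a compact operator, where $\widehat a=a\otimes\id_{S_{\Omega T_0}\otimes L^2(\bb{R}^\infty)^*}$ and $\widehat{\Dirac}=D\otimes\id_{L^2(\bb{R}^\infty)^*}+\id_{L^2(\Omega T_0)}\otimes\Dirac_L'$ is obtained by letting $D$ and $\Dirac_L$ act through the common Clifford action on $S_{\Omega T_0}$ (here $\Dirac_L'$ is $\Dirac_L$ with its trivial last tensor leg removed), so that explicitly $\widehat{\Dirac}=\sum_n\sqrt n\bigl(\gamma(\overline{z_n})(dR_{z_n}\otimes\id+\id\otimes d\rho^*(z_n))+\gamma(z_n)(dR_{\overline{z_n}}\otimes\id+\id\otimes d\rho^*(\overline{z_n}))\bigr)$ on $L^2_\infty(\Omega T_0)\otimes^\alg S_{\Omega T_0,\fin}\otimes^\alg L^2(\bb{R}^\infty)_\fin^*$.

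The heart of the matter is to establish a Weitzenb\"ock-type lower bound
$$\widehat{\Dirac}^2\ \geq\ \id_{L^2(\Omega T_0)}\otimes c\bigl(N+\tfrac{d\rho^*(d)}{i}\bigr)-C$$
for some $c>0$ and $C\geq0$, where $N$ is the number operator on $S_{\Omega T_0}$ and $\tfrac{d\rho^*(d)}{i}$ the energy operator on $L^2(\bb{R}^\infty)^*$. Since $\Omega T_0$ is abelian, the anticommutators of distinct Clifford generators kill the antisymmetric second-order terms, so (exactly as in the finite-dimensional case of Section 3) $\widehat{\Dirac}^2$ collapses to a sum over modes of coupled oscillators in the pair of $n$-th mode variables --- the momentum $dR_{z_n}$ on $L^2(\Omega T_0)$ and the bosonic operator $d\rho^*(\overline{z_n})$ on $L^2(\bb{R}^\infty)^*$ --- together with the diagonal Clifford terms producing $2N$; completing the square mode by mode gives the bound, the convergence of the infinite sums on the dense core being controlled, as in Proposition \ref{convergence}, by the key condition (\ref{eq:behavior of sigma}) and the energy estimates of Lemma \ref{estimate about PER}. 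Combined with $\widehat{\Dirac}^2\geq0$ this yields $1+\widehat{\Dirac}^2\geq\varepsilon\bigl(1+\id\otimes(N+\tfrac{d\rho^*(d)}{i})\bigr)=:\varepsilon B$ for some $\varepsilon>0$, hence $(1+\widehat{\Dirac}^2)^{-2}\leq(1+\widehat{\Dirac}^2)^{-1}\leq\varepsilon^{-1}B^{-1}$.

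Finally I would conclude: for every $x\in\ca{H}$,
$$\|(1+\widehat{\Dirac}^2)^{-1}\widehat a\,x\|^2=\langle\widehat a\,x,(1+\widehat{\Dirac}^2)^{-2}\widehat a\,x\rangle\leq\varepsilon^{-1}\langle\widehat a\,x,B^{-1}\widehat a\,x\rangle=\varepsilon^{-1}\|B^{-1/2}\widehat a\,x\|^2,$$
so (using that $\|Tx\|\leq\|Sx\|$ for all $x$ forces $T=WS$ with $\|W\|\leq1$) the operator $(1+\widehat{\Dirac}^2)^{-1}\widehat a$ factors through $B^{-1/2}\widehat a$, and it is enough that $B^{-1/2}\widehat a$ be compact. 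But $B^{-1/2}=\id_{L^2(\Omega T_0)}\otimes\bigl(1+N+\tfrac{d\rho^*(d)}{i}\bigr)^{-1/2}$, whose second factor is compact on $S_{\Omega T_0}\otimes L^2(\bb{R}^\infty)^*$ (a sum of two number operators with discrete, finite-multiplicity spectra tending to infinity), while $a$ is finite-rank on $L^2(\Omega T_0)$; hence $B^{-1/2}\widehat a=a\otimes\bigl(1+N+\tfrac{d\rho^*(d)}{i}\bigr)^{-1/2}$ is a tensor product of compact operators, so compact on $\ca{H}$, which finishes the argument. The main obstacle is the Weitzenb\"ock estimate of the third step: unlike the bounded-commutator lemma it is a genuinely quantitative infinite-dimensional statement, and the mode-by-mode square-completion must be carried out carefully so that the constants sum correctly by virtue of $\sum_k\sqrt k\,\sigma_k<\infty$.
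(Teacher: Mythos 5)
Your first two reductions are sound and in line with the paper: passing to a finite-rank $a=a_1\otimes P_\Xi$ by density, and using Lemma~\ref{B-cpt op} to translate $\Omega T_0\ltimes_\tau\bb{C}$-compactness into ordinary compactness of $(1+\widehat{\Dirac}^2)^{-1}\widehat a$ on $\ca{H}=L^2(\Omega T_0)\otimes S_{\Omega T_0}\otimes L^2(\bb{R}^\infty)^*$. The gap is the Weitzenb\"ock lower bound
\[
\widehat{\Dirac}^2\ \geq\ \id\otimes c\Bigl(N+\frac{d\rho^*(d)}{i}\Bigr)-C,
\]
which is false, and no mode-by-mode completion of the square can save it. For a single mode the cross-term joins the two first-order pieces into the shifted annihilator $A_n:=dR_{z_n}\otimes\id+\id\otimes d\rho^*(z_n)$, and in the fermion-vacuum sector of mode $n$ the contribution of that mode to $\widehat{\Dirac}^2$ is exactly $2n\,A_n^*A_n$. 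Conjugating the $L^2(\bb{R}^2)$-factor of mode $1$ by the Fourier transform turns $dR_{z_1}$ into multiplication by a complex parameter $\zeta$, and $\ker A_1(\zeta)=\ker(\zeta+d\rho^*(z_1))$ is spanned by a coherent state with expected energy $\langle d\rho^*(d)/i\rangle\approx|\zeta|^2$. Pairing such a vector with your proposed inequality gives $0\geq c|\zeta|^2-C$, which fails for $|\zeta|$ large. Equivalently, $\ker\widehat{\Dirac}$ is an infinite-dimensional direct integral of coherent-state lines with unbounded energy, so $(1+\widehat{\Dirac}^2)^{-1}$ has $1$ as an eigenvalue of infinite multiplicity and cannot be dominated by $\varepsilon^{-1}B^{-1}$. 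The compactness you need is a Rellich-type phenomenon: $\widehat a$ cuts off the $L^2(\Omega T_0)$-momentum and thereby pins down the coherent-state parameter; a pointwise operator bound on $\widehat{\Dirac}^2$ alone cannot replace this.

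The paper's argument is therefore genuinely different. It sets $\widetilde{\Dirac}_0:=\id\otimes_1\Dirac_L$, so that $\widetilde{\Dirac}_0^2=\id\otimes_1\Dirac_L^2$ commutes with $a\otimes_1\id$, and the operator $a\otimes_1(1+\Dirac_L^2)^{-1}$ is compact directly, because $(1+\Dirac_L^2)^{-1}$ \emph{is} the discrete-spectrum number-operator resolvent in the $S_{\Omega T_0}\otimes L^2(\bb{R}^\infty)^*$-direction (this is where your intuition does earn its keep). It then uses the decomposition $\widetilde{\Dirac}_R^2=\partial_1+\partial_2+\partial_3$ with $\partial_1=D^2\otimes_1\id$, $\partial_3=\widetilde{\Dirac}_0^2$, $\partial_2$ the cross-term, and the resolvent identity
\[
(1+\widetilde{\Dirac}_R^2)^{-1}a-(1+\widetilde{\Dirac}_0^2)^{-1}a
=-(1+\widetilde{\Dirac}_R^2)^{-1}(\partial_1+\partial_2)(1+\widetilde{\Dirac}_0^2)^{-1}a.
\]
The right-hand side is compact because $D^2a$ is still finite rank on $L^2(\Omega T_0)$ (Proposition~\ref{convergence}) and because the $\partial_2$-sum converges in norm once sandwiched between $a$ (which supplies $\sigma_n$-decay through the $\Xi_{\sigma_n}$-factors) and $(1+\Dirac_L^2)^{-1}$ (which supplies $1/\sqrt n$-decay via Lemma~\ref{estimate about PER}), with summability coming exactly from (\ref{eq:behavior of sigma}). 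If you want to keep the shape of your argument, you must prove a localized estimate of the form $\|(1+\widehat{\Dirac}^2)^{-1}\widehat a\,x\|\leq C'\|B^{-1/2}\widehat a\,x\|$ directly --- say, via a resolvent comparison of this type --- rather than deduce it from an operator inequality for $\widehat{\Dirac}^2$ itself.
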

\begin{proof}
Since the set of compact operators is closed, it is suffices to deal with only general $a$, so we may assume that $a\in (\Omega T_0\ltimes \Omega T_0)_\fin$. Moreover, we may assume that $a$ is of the form $a_1\otimes P$ just as in the proof of Proposition \ref{first order}.

We prepare another operator $\widetilde{\cancel{\partial}}_0:=\id\otimes_1 \Dirac_L$. We will prove the statement by checking that $(1)$ $(1+\widetilde{\cancel{\partial}}_0^2)^{-1}a\otimes_1 \id$ is compact, and $(2)$ the difference $(1+\widetilde{\cancel{\partial}_R}^2)^{-1}a\otimes_1 \id-(1+\widetilde{\cancel{\partial}}_0^2)^{-1}a\otimes_1 \id$ is also compact.

$(1)$ To check the statement, it is enough to focus on the $S_{\Omega T_0}\otimes (\Omega T_0\ltimes_\tau \bb{C})$-part, because $(1+\widetilde{\cancel{\partial}}_0^2)^{-1}a\otimes_1\id=a\otimes_1(1+\Dirac^2_L)^{-1}$ and $a$ acts on $L^2(\Omega T_0)$ as a compact operator.
Recall the description of the $\Omega T_0\ltimes_\tau \bb{C}\cong \ca{K}(L^2(\bb{R}^\infty)^*)$-compact operator on $\Omega T_0\ltimes_\tau \bb{C}$ , under the identification $\ca{K}(L^2(\bb{R}^\infty)^*)\cong \overline{L^2(\bb{R}^\infty)^*\otimes L^2(\bb{R}^\infty)}$, Lemma \ref{B-cpt op}. Then, 
a projection onto $\bb{C}s\otimes \bb{C}\phi\otimes L^2(\bb{R}^\infty)$ from $S_{\Omega T_0}\otimes L^2(\bb{R}^\infty)^*\otimes L^2(\bb{R}^\infty)$ is an $\Omega T_0\ltimes_\tau \bb{C}$-compact operator.
Since $\Dirac_L^2$ acting on the Hilbert space $S_{\Omega T_0}\otimes L^2(\bb{R}^\infty)^*$ has discrete spectrum, we can take a C.O.N.S. consisting of eigenvectors $\{ s_n\otimes \phi_m\}$ such that $2N(s_n)=|s_n|s_n$ and $\frac{2d\rho(d)}{i}\phi_m=|\phi_m|\phi_m$ ($|s_m|$, $|\phi_n|\in\bb{R}$ are nothing but eigenvalues).
By definition of $N$ and $d\rho(d)$, $|s_m|\to \infty$ and $|\phi_n|\to \infty$ as $n,m\to \infty$.
If we wright the projection onto $\bb{C}s\otimes \bb{C}\phi\otimes L^2(\bb{R}^\infty)$ as $P_{s\otimes \phi}$, we can approximate the operator $(1+\Dirac^2_L)^{-1}$ by the sequence of $\Omega T_0\ltimes_\tau \bb{C}$-compact operators
$$\sum_{n,m}^{N,M} \frac{1}{1+|s_n|+|\phi_m|}P_{s_n\otimes \phi_m}.$$
Since two projections $P_{s_n\otimes \phi_m}$ and $P_{s_n'\otimes \phi_m'}$ are orthogonal unless $n=n'$ and $m=m'$, the infinite sum converges in $\ca{K}_{\Omega T_0\ltimes_\tau \bb{C}}(S_{\Omega T_0}\otimes (\Omega T_0\ltimes_\tau \bb{C}))$.



$(2)$ Let us compute the difference $(1+\widetilde{\cancel{\partial}_R}^2)^{-1}a\otimes_1\id- (1+\widetilde{\cancel{\partial}}_0^2)^{-1}a\otimes_1\id$ when $a=(\phi\otimes\psi^*)\otimes P_\Xi$.

\begin{align*}
&\bbbra{(1+\widetilde{\cancel{\partial}_R}^2)^{-1}(\phi\otimes\psi^*)\otimes P_\Xi- (1+\widetilde{\cancel{\partial}}_0^2)^{-1}(\phi\otimes\psi^*)\otimes P_\Xi}
(f\otimes s\otimes v)\\
&\;\;\;\;= (1+\widetilde{\cancel{\partial}_R}^2)^{-1}\bra{(\phi\otimes\psi^*)\otimes P_\Xi-(1+\widetilde{\cancel{\partial}_R}^2)
(1+\widetilde{\cancel{\partial}}_0^2)^{-1}(\phi\otimes\psi^*)\otimes P_\Xi} (f\otimes s\otimes v)\\
&\;\;\;\;=(1+\widetilde{\cancel{\partial}_R}^2)^{-1}\innpro{\psi\otimes\Xi}{f}{}
\bra{
(\phi\otimes\Xi)\otimes s\otimes v-(1+\widetilde{\cancel{\partial}_R}^2)(1+\widetilde{\cancel{\partial}}^2_0)^{-1}
\Bigl(\bra{\phi\otimes\Xi}\otimes s\otimes v\Bigr)
} \\
&\;\;\;\;=: (1+\widetilde{\cancel{\partial}_R}^2)^{-1}{\rm Rem}.
\end{align*}

Since $(1+\widetilde{\cancel{\partial}_R}^2)^{-1}$ is a bounded operator, it is enough to study the remainder term ${\rm Rem}$. For this aim, we prepare a formula
\begin{align*}
\widetilde{\Dirac}_R^2&=D^2\otimes\id\otimes\id +2\sum n\Bigl(dR_{z_n}\otimes\id\otimes d\rho(\overline{z_n})+dR_{\overline{z_n}}\otimes\id\otimes\gamma(z_n)\Bigr) +\id\otimes_1\Dirac_L^2\\
&=:\partial_1+\partial_2+\partial_3,
\end{align*}
which can be obtained by a simple calculation. The formula simplifies the remainder term:
$${\rm Rem}=-\innpro{\psi\otimes\Xi}{f}{}\bra{(\partial_1+\partial_2)(1+\widetilde{\cancel{\partial}}^2_0)^{-1}\Bigl(\bra{\phi\otimes\Xi}\otimes s\otimes v\Bigr)
}.$$

Since the $\partial_1$-part is independent of the energy of $s\otimes v$, the composition $\partial_1\circ (1+\widetilde{\cancel{\partial}}^2_0)^{-1}$ is of the form
\begin{center}
rank one operator $\otimes$ compact operator.
\end{center}

For the $\partial_2$-part, recall Lemma \ref{estimate about PER}, that is, $d\rho$-part is much weaker than $\Dirac_L^2$. Thus, the composition
$$dR_{z_n}\otimes\id\otimes d\rho(\overline{z_n})\circ(1+\Dirac_0^2)^{-1}\circ a\otimes_1\id$$
is compact. Morevoer, thanks to $dR_{z_n}$ acting on $\Xi_{\sigma_n}$, the operator norm of the above is less than $\sqrt{n}\sigma_n\times \|d\rho(\overline{z_n})(1+\Dirac_0^2)^{-1}\|$ if $n$ is large enough. Hence the infinite sum of operators converges and compact thanks to the key condition (\ref{eq:behavior of sigma}).


\end{proof}

\subsection{The Mishchenko line bundle, the assembly map for $\Omega T_0$ and the analytic index}

Let us define a $K$-theory class which plays a role of Mishchenko line bundle, following the result of Lemma \ref{cut off=proj}.
\begin{dfn}
Let $[c]\in KK(\bb{C},\Omega T_0\ltimes C_0(\Omega T_0))$ be the $K$-theory class represented by the rank one projection $P_\Xi$ onto $\bb{C}\Xi$, where $\Xi=\Xi_{\sigma_1}\otimes \Xi_{\sigma_2}\otimes \cdots$. More precisely, $[c]$ is presented by $(P_\Xi*(\Omega T_0\ltimes C_0(\Omega T_0)),0)$ in the language of unbounded (in fact bounded) Kasparov modules.
\end{dfn}

We have reached the following construction of the assembly map for $\Omega T_0$.

\begin{dfn}
$\mu^{\Omega T_0}_\tau(x):=[c]\otimes_{\Omega T_0\ltimes C_0(\Omega T_0)}j^{\Omega T_0}_\tau(x)\in \Psi(\bb{C},\Omega T_0\ltimes_\tau \bb{C})$.
\end{dfn}

\begin{thm}

$\mu^{\Omega T_0}_\tau(x)=(S_{\Omega T_0}\otimes (\Omega T_0\ltimes_\tau\bb{C}),\Dirac_L)$ as an element of $\Psi(\bb{C},\Omega T_0\ltimes_\tau \bb{C})$

\end{thm}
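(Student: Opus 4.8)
The plan is to carry out the argument of Proposition~\ref{value of the assembly map} in the inductive-limit setting, using the model
$$j^{\Omega T_0}_\tau(x)=\bigl(L^2(\Omega T_0)\otimes S_{\Omega T_0}\otimes(\Omega T_0\ltimes_\tau\bb{C}),\ \widetilde{\Dirac}_R\bigr)$$
of Theorem~\ref{j-hom for Omega T0}, together with the identification $\Omega T_0\ltimes C_0(\Omega T_0)\cong\varinjlim\ca{K}(L^2(\bb{R}^{2n}))\cong\ca{K}(L^2(\Omega T_0))$ under which $[c]$ is the rank one projection $P_\Xi$. First I would identify the interior tensor product module. Writing $B:=\Omega T_0\ltimes C_0(\Omega T_0)$ and using $P_\Xi\ast B=P_\Xi^{3}\ast B$, the computation of Proposition~\ref{value of the assembly map} goes through verbatim and yields a Hilbert $(\Omega T_0\ltimes_\tau\bb{C})$-module isomorphism
$$(P_\Xi\ast B)\otimes_B\bigl(L^2(\Omega T_0)\otimes S_{\Omega T_0}\otimes(\Omega T_0\ltimes_\tau\bb{C})\bigr)\ \cong\ S_{\Omega T_0}\otimes(\Omega T_0\ltimes_\tau\bb{C}),$$
sending $P_\Xi\circ k\otimes(f\otimes s\otimes v)$ to $\innpro{\Xi}{k(f)}{}\,s\otimes v$. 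I would also record that the candidate $(S_{\Omega T_0}\otimes(\Omega T_0\ltimes_\tau\bb{C}),\Dirac_L)$ is a genuine unbounded Kasparov $\bb{C}$--$(\Omega T_0\ltimes_\tau\bb{C})$-module: self-adjointness and regularity of the closure of $\Dirac_L$ are as for $\Dirac_R$ in Proposition~\ref{convergence} (and \cite{T}), while $(\Omega T_0\ltimes_\tau\bb{C})$-compactness of $(1+\Dirac_L^2)^{-1}$ is precisely step~$(1)$ of the proof of the compactness lemma above, specialised to a scalar coefficient.

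With the module fixed, I would invoke the criterion of Proposition~\ref{Kuc} with $(E_1,D_1):=[c]=(P_\Xi\ast B,0)$, $(E_2,D_2):=j^{\Omega T_0}_\tau(x)$, and the candidate product $(S_{\Omega T_0}\otimes(\Omega T_0\ltimes_\tau\bb{C}),\Dirac_L)$. Since $D_1=0$, the second condition $\dom(\Dirac_L)\subseteq\dom(D_1)\otimes_B E_2=E_1\otimes_B E_2$ and the positivity condition $\innpro{D_1\otimes\id(e)}{\Dirac_L e}{}+\innpro{\Dirac_L e}{D_1\otimes\id(e)}{}=0\ge 0$ are automatic, and everything is concentrated in the connection condition: for $e_1=P_\Xi\circ k$ ranging over a dense set, the graded commutator of $\mathrm{diag}(\Dirac_L,\widetilde{\Dirac}_R)$ with the off-diagonal operator built from $T_{e_1}$ must be bounded. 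Under the identification above, $T_{e_1}$ contracts only the $L^2(\Omega T_0)$-tensor factor; since $\widetilde{\Dirac}_R=D\otimes_2\id+\id\otimes_1\Dirac_L$ and the summand $\id\otimes_1\Dirac_L$ touches only $S_{\Omega T_0}\otimes(\Omega T_0\ltimes_\tau\bb{C})$, one checks $T_{e_1}\circ(\id\otimes_1\Dirac_L)=\pm\,\Dirac_L\circ T_{e_1}$ exactly, so this summand drops out of the commutator. It then remains to show that $T_{e_1}\circ(D\otimes_2\id)$, and, by passing to adjoints using essential self-adjointness of $D$, the companion matrix entry, is bounded.

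This last boundedness is the main obstacle, since $D=\sum_{n\ge1}\sqrt n\,\bigl(dR_{z_n}\otimes\gamma(\overline{z_n})+dR_{\overline{z_n}}\otimes\gamma(z_n)\bigr)$ is a genuine infinite sum of unbounded operators, whereas in the finite-dimensional Proposition~\ref{value of the assembly map} the analogous operator was a finite sum and the estimate was immediate. I would take $k=\phi\otimes\psi^{\ast}$ with $\phi$, $\psi$ asymptotically constant, say $\phi=\phi_1\otimes\Xi_{\sigma_{M+1}}\otimes\cdots$ and $\psi=\psi_1\otimes\Xi_{\sigma_{M+1}}\otimes\cdots$ with $\phi_1,\psi_1\in L^2_\infty(\bb{R}^{2M})$, so that
$$T_{e_1}\bigl((D\otimes_2\id)(f\otimes s\otimes v)\bigr)=\sum_{n\ge1}\sqrt n\,\innpro{\Xi}{\phi}{}\Bigl(\innpro{dR_{z_n}^{\ast}\psi}{f}{}\,\gamma(\overline{z_n})s\otimes v+\innpro{dR_{\overline{z_n}}^{\ast}\psi}{f}{}\,\gamma(z_n)s\otimes v\Bigr).$$
Since $\|\gamma(z_n)\|_{\op}=\sqrt 2$ it suffices to prove $\sum_n\sqrt n\,\|dR_{z_n}^{\ast}\psi\|<\infty$ (and the analogous bound for $dR_{\overline{z_n}}^{\ast}\psi$). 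For the finitely many indices $n\le M$ each term is finite, $dR_{z_n}^{\ast}\psi=(dR_{z_n}^{\ast}\psi_1)\otimes\Xi$ with $dR_{z_n}^{\ast}\psi_1$ a first-order differential applied to an element of $L^2_\infty(\bb{R}^{2M})$ (cf.\ Remark~\ref{aD is bounded}); for $n>M$ the operator $dR_{z_n}^{\ast}$ acts on the tensor factor $\Xi_{\sigma_n}$ and $\|dR_{z_n}^{\ast}\Xi_{\sigma_n}\|\le\sigma_n$ gives $\|dR_{z_n}^{\ast}\psi\|\le\sigma_n\|\psi_1\|$, whence $\sum_{n>M}\sqrt n\,\sigma_n<\infty$ by the key condition~(\ref{eq:behavior of sigma}). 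Thus the series converges in operator norm, the connection condition of Proposition~\ref{Kuc} holds, and therefore $(S_{\Omega T_0}\otimes(\Omega T_0\ltimes_\tau\bb{C}),\Dirac_L)$ represents $[c]\otimes_{\Omega T_0\ltimes C_0(\Omega T_0)}j^{\Omega T_0}_\tau(x)=\mu^{\Omega T_0}_\tau(x)$, which is the claim.
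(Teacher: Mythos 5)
Your proof is correct and follows exactly the route the paper sketches: the author says only that "the proof is almost parallel to Proposition~\ref{value of the assembly map} except that the Dirac operator is an infinite sum; just like Proposition~\ref{convergence}, the infinite sum converges," and you have simply filled in the details — identifying the interior tensor product with $S_{\Omega T_0}\otimes(\Omega T_0\ltimes_\tau\bb{C})$, reducing the Kucerovsky conditions to boundedness of $T_{e_1}\circ(D\otimes_2\id)$, and using the key condition~(\ref{eq:behavior of sigma}) together with $\|dR_{z_n}^{\ast}\Xi_{\sigma_n}\|\le\sigma_n$ to make the infinite sum converge.
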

\begin{proof}
The proof is almost parallel to Proposition \ref{value of the assembly map} except that the Dirac operator is an infinite sum. Just like Proposition \ref{convergence}, the infinite sum converges.
\end{proof}

This theorem looks quite natural. Let us recall the definition of the analytic index, Definition \ref{def of a-ind}. Let us define the analytic index $\ind_{\Omega T_0\ltimes_\tau \bb{C}}(x)\in \Psi(\bb{C},\Omega T_0\ltimes_\tau \bb{C})$ as follows.

\begin{lem}
$\Omega T_0\ltimes_{-\tau}\bb{C}=\ca{K}(L^2(\bb{R}^\infty))$ has an $\Omega T_0\ltimes_\tau\bb{C}=\ca{K}(L^2(\bb{R}^\infty)^*)$-Hilbert module structure defined by
$$f*b:={}^tbf$$
$$\innpro{f_1}{f_2}{}:={}^t(f_2f_1^*)$$
for $f$, $f_1$, $f_2\in \Omega T_0\ltimes_{-\tau}\bb{C}$ and $b\in \Omega T_0\ltimes_{\tau}\bb{C}$, just like Proposition \ref{small remark on a-ind}.
\end{lem}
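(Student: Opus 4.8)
The statement to prove is the \textbf{Lemma} asserting that $\Omega T_0\ltimes_{-\tau}\bb{C}=\ca{K}(L^2(\bb{R}^\infty))$ carries an $\Omega T_0\ltimes_\tau\bb{C}=\ca{K}(L^2(\bb{R}^\infty)^*)$-Hilbert module structure given by the two formulas $f*b:={}^tbf$ and $\innpro{f_1}{f_2}{}:={}^t(f_2f_1^*)$. Let me sketch a proof plan.

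---

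The plan is to verify the Hilbert $C^*$-module axioms directly, exactly in parallel with Lemma \ref{small remark on a-ind} (which handled the finite-dimensional model $\End(V)$, $\End(V^*)$), but now at the $C^*$-algebra level for the infinite-dimensional compact-operator algebras. Write $H:=L^2(\bb{R}^\infty)$ and $H^*$ for its dual, $A:=\ca{K}(H^*)$, $M:=\ca{K}(H)$; the transpose $f\mapsto {}^tf$ is the canonical conjugate-linear $*$-anti-isomorphism $\ca{K}(H)\to\ca{K}(H^*)$ (equivalently, $\Psi$ of Lemma \ref{small remark on a-ind} extended to compacts by continuity). First I would check the algebraic module identities: the right action $f*b={}^tbf$ is associative, $(f*b_1)*b_2=f*(b_1b_2)$ because ${}^t(b_1b_2)={}^tb_2\,{}^tb_1$ makes the composition order come out right; it is compatible with scalars; and it is nondegenerate/$*$-bilinear. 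Then I would check the inner-product axioms: $\innpro{f_1}{f_2}{}={}^t(f_2f_1^*)\in A=\ca{K}(H^*)$ is well defined (product of compacts is compact, transpose of a compact is compact); it is $A$-linear in the second variable, $\innpro{f_1}{f_2*b}{}={}^t(f_2\,{}^tb\,f_1^*)={}^t(f_2 f_1^*)\,{}^t({}^tb)=\innpro{f_1}{f_2}{}\,b$ — here one uses ${}^t{}^tb=b$ under the canonical identification $H^{**}\cong H$; it satisfies $\innpro{f_1}{f_2}{}^*=\innpro{f_2}{f_1}{}$ since $\left({}^t(f_2f_1^*)\right)^*={}^t\!\left((f_2f_1^*)^*\right)={}^t(f_1f_2^*)$; and it is positive: $\innpro{f}{f}{}={}^t(ff^*)$ and $ff^*\geq 0$ in $M$, so its transpose is $\geq 0$ in $A$ because $\Psi$ is a $*$-homomorphism-like map (a $*$-anti-isomorphism still preserves positivity).

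The remaining and slightly more substantive points are completeness and the compatibility of norms. For completeness I would argue that the norm induced by the inner product, $\|f\|_M^2=\|\innpro{f}{f}{}\|_A=\|{}^t(ff^*)\|_A=\|ff^*\|_M=\|f\|_M^2$, coincides with the ordinary $C^*$-norm on $\ca{K}(H)$ (using that the transpose is isometric), so $M$ is already complete in that norm and hence is a genuine Hilbert $A$-module; no separate completion step is needed, which is the analogue of the fact that $\End(V)$ was automatically complete in the finite-dimensional case. One should also note fullness: the inner products ${}^t(f_2 f_1^*)$ span a dense ideal of $\ca{K}(H^*)$ since already rank-one operators arise this way, so $M$ is a full Hilbert $A$-module; this is the incarnation of the Morita equivalence $\ca{K}(H)\sim\bb{C}\sim\ca{K}(H^*)$. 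Finally, I would record the concrete description — transporting the isomorphism $\Phi$ of Proposition \ref{comp of a-ind} / Lemma \ref{small remark on a-ind} to this setting — that on elementary tensors $\Psi(v\otimes w)=w\otimes v$ for $v\in H$, $w\in H^*$, which identifies this module structure with the one coming from $\Phi:\Omega T_0\ltimes_{-\tau}\bb{C}\to\Omega T_0\ltimes_\tau\bb{C}$ and will be what makes $\Dirac_R$ correspond to $\Dirac_L$ in the comparison of indices.

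The main obstacle is not any single computation — every identity above is a one-line manipulation with transposes — but rather bookkeeping the conjugate-linearity and the order-reversal of the transpose consistently, and being careful that ${}^t$ is only a $\ast$-\emph{anti}-isomorphism so that the left/right sides of $A$ get swapped: the formula $f*b={}^tbf$ is a genuine \emph{right} $A$-action precisely because the anti-homomorphism property $({}^tb_1)({}^tb_2)={}^t(b_2b_1)$ converts right multiplication by $A$ on itself into right multiplication here. The only place requiring a word of justification beyond formal symbol-pushing is the identification $H^{**}\cong H$ used when computing $\innpro{f_1}{f_2*b}{}$; since $H$ is a Hilbert space this is canonical and isometric, so it causes no trouble. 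I would therefore present the proof as: (i) state $\ {}^t:\ca{K}(H)\to\ca{K}(H^*)$ is a conjugate-linear isometric $\ast$-anti-isomorphism with ${}^t{}^t=\id$; (ii) verify the module and inner-product axioms by the transpose identities above; (iii) observe the norm identity, hence completeness and the Hilbert-module structure; (iv) note fullness and the explicit form $\Psi(v\otimes w)=w\otimes v$, matching $\Phi$.
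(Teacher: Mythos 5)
Your verification is the argument the paper leaves implicit — the Lemma is stated without proof, citing the finite-dimensional Lemma \ref{small remark on a-ind}, whose own proof is just ``one can check by a simple calculation'' — and your fleshed-out check of the Hilbert-module axioms via the transpose identities is exactly the intended route. One slip worth fixing: the transpose $\Psi = {}^t(\cdot):\ca{K}(H)\to\ca{K}(H^*)$ is \emph{linear}, not conjugate-linear (the paper itself calls $\Psi$ a linear map in Lemma \ref{small remark on a-ind}); linearity of ${}^t$ is in fact needed so that $f*(\lambda b)=\lambda(f*b)$ is a genuine $\bb{C}$-linear right action. There is also a harmless typo in the $A$-linearity line: the intermediate expression should read ${}^t\bigl({}^tb\,f_2\,f_1^*\bigr)$ rather than ${}^t\bigl(f_2\,{}^tb\,f_1^*\bigr)$, after which anti-multiplicativity and ${}^t{}^t=\id$ give $\innpro{f_1}{f_2}{}\,b$ as you state. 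With these two corrections the argument is complete.
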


\begin{dfn}
We define the analytic index of $x$ by 
$$\ind_{\Omega T_0\ltimes_\tau\bb{C}}(x):=((\Omega T_0\ltimes_{-\tau}\bb{C})\otimes S_{\Omega T_0},\Dirac_R)\in \Psi(\bb{C},\Omega T_0\ltimes_\tau\bb{C}).$$

The bimodule structure is defined by the above lemma.

\end{dfn}

Two indices $\mu^{\Omega T_0}_\tau(x)$ and $\ind_{\Omega T_0\ltimes_\tau\bb{C}}(x)$ coincide as follows.
\begin{thm}
$$\mu^{\Omega T_0}_\tau(x)=\ind_{\Omega T_0\ltimes_\tau\bb{C}}(x).$$
\end{thm}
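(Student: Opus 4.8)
The plan is to exhibit an explicit isomorphism of Kasparov $\bb{C}$-$(\Omega T_0\ltimes_\tau\bb{C})$-modules between the $KK$-theoretical index $\mu^{\Omega T_0}_\tau(x)=(S_{\Omega T_0}\otimes(\Omega T_0\ltimes_\tau\bb{C}),\Dirac_L)$ and the analytic index $\ind_{\Omega T_0\ltimes_\tau\bb{C}}(x)=((\Omega T_0\ltimes_{-\tau}\bb{C})\otimes S_{\Omega T_0},\Dirac_R)$, carrying one operator to the other. This is the infinite-dimensional analogue of Proposition \ref{comp of a-ind}, so I would follow that proof line by line, using the transpose/duality map $\Psi$ of Lemma \ref{small remark on a-ind} in place of the function-theoretic flip $f\mapsto f^\lor$ (which does not survive in infinite dimensions, as the paper already warns). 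Concretely, recall $\Omega T_0\ltimes_\tau\bb{C}=\ca{K}(L^2(\bb{R}^\infty)^*)$ and $\Omega T_0\ltimes_{-\tau}\bb{C}=\ca{K}(L^2(\bb{R}^\infty))$, and the Hilbert module structure on the latter over the former is the one given in the lemma just before the statement, $f*b={}^tbf$ and $\innpro{f_1}{f_2}{}={}^t(f_2f_1^*)$. The map $\Psi:\ca{K}(L^2(\bb{R}^\infty))\to\ca{K}(L^2(\bb{R}^\infty)^*)$ sending a compact operator to its transpose is, by that lemma, exactly an intertwiner of these module structures, i.e. $\Psi(f*b)=\Psi(f)\cdot\Psi(b)$ at the level of the $\Op^\pm$ pictures and $\Psi(\innpro{f_1}{f_2}{})=\innpro{\Psi(f_1)}{\Psi(f_2)}{}$.

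First I would tensor $\Psi$ with the identity on $S_{\Omega T_0}$ (and reorder the tensor factors, noting $S_{\Omega T_0}$ carries trivial $\Omega T_0$-action on this side) to obtain a $\bb{C}$-linear, $(\Omega T_0\ltimes_\tau\bb{C})$-linear isometric bijection
\[
\Theta:=\Psi\otimes\id:\ (\Omega T_0\ltimes_{-\tau}\bb{C})\otimes S_{\Omega T_0}\ \longrightarrow\ S_{\Omega T_0}\otimes(\Omega T_0\ltimes_\tau\bb{C}),
\]
on the dense algebraic submodules $L^2(\bb{R}^\infty)_\fin\otimes^\alg L^2(\bb{R}^\infty)_\fin^*\otimes^\alg S_{\Omega T_0,\fin}$ and $S_{\Omega T_0,\fin}\otimes^\alg L^2(\bb{R}^\infty)_\fin^*\otimes^\alg L^2(\bb{R}^\infty)_\fin$, using the concrete description $\Psi(v\otimes g)=g\otimes v$ from Lemma \ref{small remark on a-ind}. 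Second, I would check that under $\Theta$ the operator $\Dirac_R=\sum\sqrt{n}(\id\otimes d\rho^*(\overline{z_n})\otimes\gamma(z_n)+\id\otimes d\rho^*(z_n)\otimes\gamma(\overline{z_n}))$ is carried term by term to $\Dirac_L=\sum_n\sqrt{n}(\gamma(z_n)\otimes d\rho^*(\overline{z_n})\otimes\id+\gamma(\overline{z_n})\otimes d\rho^*(z_n)\otimes\id)$: the Clifford factors $\gamma(z_n),\gamma(\overline{z_n})$ act on the $S_{\Omega T_0}$-slot, which $\Theta$ merely transports, while the $d\rho^*(\overline z_n),d\rho^*(z_n)$ factors act on the $L^2(\bb{R}^\infty)^*$-slot, which is common to both pictures; transposition turns left-module multiplication by an operator on $L^2(\bb{R}^\infty)$ into right-module multiplication by its transpose on $L^2(\bb{R}^\infty)^*$, which is precisely the relation between the $\Dirac_R$-presentation (operator acting inside the coefficient algebra $\Omega T_0\ltimes_{-\tau}\bb{C}$) and the $\Dirac_L$-presentation. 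Since both operators are defined as convergent infinite sums on the respective finite-energy domains (by the same estimate as in Proposition \ref{convergence}, controlled by Lemma \ref{estimate about PER}), the intertwining extends from each summand to the closure, and $\Theta$ maps $\dom(\Dirac_R)$ onto $\dom(\Dirac_L)$.

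Third, I would note that a unitary isomorphism of $\bb{Z}_2$-graded Hilbert modules intertwining the unbounded operators induces a homotopy (indeed an isomorphism) of unbounded Kasparov modules, hence equality in $\Psi(\bb{C},\Omega T_0\ltimes_\tau\bb{C})$ after applying $\fra{b}$; since the source $C^*$-algebra is $\bb{C}$ there are no commutator or compactness conditions to re-verify beyond what was already established in Theorem \ref{j-hom for Omega T0} and the preceding theorem. Combining this with the identification $\mu^{\Omega T_0}_\tau(x)=(S_{\Omega T_0}\otimes(\Omega T_0\ltimes_\tau\bb{C}),\Dirac_L)$ proved just above yields $\mu^{\Omega T_0}_\tau(x)=\ind_{\Omega T_0\ltimes_\tau\bb{C}}(x)$.

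The main obstacle I anticipate is purely bookkeeping around the three tensor slots and the grading sign conventions of Remark \ref{ass. of grp}(2): one must be careful that $\Theta$ is genuinely \emph{even} for the gradings on $S_{\Omega T_0}$ and on $\ca{K}(L^2(\bb{R}^\infty)^{(*)})$ induced by the number/energy operators, that the transpose map respects these gradings, and that the Koszul signs appearing in $\id\otimes\Dirac$ on the two sides match up after the flip — otherwise $\Dirac_R$ could map to $-\Dirac_L$ or to $\Dirac_L$ composed with the grading operator, which would still give the same $KK$-class but requires an extra remark. A secondary point to handle with care is that the module structure on $\Omega T_0\ltimes_{-\tau}\bb{C}$ used in the definition of $\ind_{\Omega T_0\ltimes_\tau\bb{C}}(x)$ is the \emph{nonstandard} one from the lemma rather than the obvious left $\ca{K}$-module structure, so the verification that $\Theta$ is a module map must invoke precisely that lemma (equivalently the commuting square with $\Op^\pm,\Psi$) and not a naive identification; once that is in place, everything else is the same calculation as in the finite-dimensional model of Section 3.3 with convergent infinite sums in place of finite ones.
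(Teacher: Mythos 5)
Your proof is correct and follows essentially the same route as the paper's: both exhibit the transpose map $\phi\otimes\psi\mapsto\psi\otimes\phi$ on $L^2(\bb{R}^\infty)\otimes L^2(\bb{R}^\infty)^*$ (tensored with the identity on $S_{\Omega T_0}$) as an $(\Omega T_0\ltimes_\tau\bb{C})$-Hilbert-module isomorphism intertwining $\Dirac_R$ and $\Dirac_L$, relying on the lemma just before the theorem for the nonstandard module structure. Your version is more explicit about grading/sign bookkeeping and about the convergence of the infinite sums on finite-energy domains, points the paper passes over in silence, but the underlying argument is the same.
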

\begin{proof}
The transpose $\Omega T_0\ltimes_{-\tau}\bb{C}= \ca{K}(L^2(\bb{R}^\infty))\to \ca{K}(L^2(\bb{R}^\infty)^*)=\Omega T_0\ltimes_{\tau}\bb{C}$ gives an isomorphism as $(\Omega T_0\ltimes_\tau \bb{C})$-Hilbert modules.
More concretely, $\phi\otimes \psi\in L^2(\bb{R}^\infty)\otimes^\alg L^2(\bb{R}^\infty)^*\subseteq \ca{K}(L^2(\bb{R}^\infty))$ is mapped to $\psi\otimes\phi\in L^2(\bb{R}^\infty)^*\otimes^\alg L^2(\bb{R}^\infty)\subseteq \ca{K}(L^2(\bb{R}^\infty)^*)$. Hence $\Dirac_L$ corresponds to $\Dirac_R$.

\end{proof}

\section*{Acknowledgements}
I am very grateful to my supervisors Nigel Higson and Tsuyoshi Kato. I also appreciate their current/former students who discussed with me. I benefit from their various back grounds.
I am supported by JSPS KAKENHI Grant Number 16J02214 and
the Kyoto Top Global University Project (KTGU).

\end{document}